\newtheorem{definition}{Definition}[section]
\newtheorem{theorem}[definition]{Theorem}
\newtheorem{lemma}[definition]{Lemma}
\newtheorem{proposition}[definition]{Proposition}
\newtheorem{corollary}[definition]{Corollary}
\theoremstyle{definition}
\newtheorem{example}[definition]{Example}
\newtheorem{remark}[definition]{Remark}
\begin{document}

\keywords{Star automaton group, Schreier graph, Self-similar representation, Adjacency spectrum, KNS spectral measure, Ihara zeta function, Graph isomorphism, Boundary uniform measure.}

\title{On an uncountable family of  graphs whose spectrum is a Cantor set}
\author{Matteo Cavaleri}
\address{Matteo Cavaleri, Universit\`{a} degli Studi Niccol\`{o} Cusano - Via Don Carlo Gnocchi, 3 00166 Roma, Italia}
\email{matteo.cavaleri@unicusano.it}

\author{Daniele D'Angeli}
\address{Daniele D'Angeli, Universit\`{a} degli Studi Niccol\`{o} Cusano - Via Don Carlo Gnocchi, 3 00166 Roma, Italia}
\email{daniele.dangeli@unicusano.it}

\author{Alfredo Donno}
\address{Alfredo Donno, Universit\`{a} degli Studi Niccol\`{o} Cusano - Via Don Carlo Gnocchi, 3 00166 Roma, Italia}
\email{alfredo.donno@unicusano.it (Corresponding Author)}

\author{Emanuele Rodaro}
\address{Emanuele Rodaro, Politecnico di Milano - Piazza Leonardo da Vinci, 32 20133 Milano, Italia}
\email{emanuele.rodaro@polimi.it}

\begin{abstract}
For each $p\geq 1$, the star automaton group $\mathcal{G}_{S_p}$ is an automaton group which can be defined starting from a star graph on $p+1$ vertices. We study Schreier graphs associated with the action of the group $\mathcal{G}_{S_p}$ on the regular rooted tree $T_{p+1}$ of degree $p+1$ and on its boundary $\partial T_{p+1}$. With the transitive action on the $n$-th level of $T_{p+1}$ is associated a finite Schreier graph $\Gamma^p_n$, whereas there exist uncountably many orbits of the action on the boundary, represented by infinite Schreier graphs which are obtained as limits of the sequence $\{\Gamma_n^p\}_{n\geq 1}$ in the Gromov-Hausdorff topology. We obtain an explicit description of the spectrum of the graphs $\{\Gamma_n^p\}_{n\geq 1}$. Then, by using amenability of $\mathcal{G}_{S_p}$, we prove that the spectrum of each infinite Schreier graph is the union of a Cantor set of zero Lebesgue measure, which is the Julia set of the quadratic map $f_p(z) = z^2-2(p-1)z -2p$, and a countable collection of isolated points supporting the KNS spectral measure. We also give a complete classification of the infinite Schreier graphs up to isomorphism of unrooted graphs, showing that they may have $1$, $2$ or $2p$ ends, and that the case of $1$ end is generic with respect to the uniform measure on $\partial T_{p+1}$.
\end{abstract}

\maketitle

\begin{center}
{\footnotesize{\bf Mathematics Subject Classification (2010)}: 05C50, 05C60, 05C63, 20E08, 20F65, 37F10.}
\end{center}

\section{Introduction}
Schreier graphs are very popular in automaton group theory. In fact, they describe in a very natural way the action of an invertible automaton on words over an alphabet or, equivalently, on a regular rooted tree. This relates algebraic properties of the automaton group with combinatorial properties of the corresponding Schreier graphs. This paper can be framed into the exciting research field involving groups acting by automorphisms on rooted trees. Many papers have been devoted to these topics in the last decades: the interested reader can refer to the following list of works (and bibliography therein) for more details \cite{israel, dynamicssubgroup, fromtangled, GNS, nekrashevyc}. \\
Every automaton group acts by automorphisms on the rooted tree $T$. The action on finite levels is described by finite Schreier graphs. Going deeper and deeper in the tree, one is led to the study of the dynamical system $(G, \partial T, \nu)$ carrying the measure $\nu$  invariant under the action of the group $G$ on the boundary $\partial T$. One orbit of this action (i.e., a Schreier graph) can be seen as an infinite rooted graph obtained as limit of a sequence of finite rooted Schreier graphs in the Gromov-Hausdorff topology. Finite and infinite Schreier graphs have been investigated from a combinatorial point of view in several contexts (e.g., \cite{dimeri, tuttehanoi}). Classifications of infinite Schreier graphs have been studied in several papers (see \cite{intermediate, BDN, mio, basilica, perez} for further discussions about this topic).\\
In this setting, another problem is of considerable interest: the study of the spectral properties of Schreier graphs associated with an automaton group.
The determination of the spectrum of the Markov operator associated with a graph is, in general, a very difficult task, and only few examples are known for families of graphs. This analysis is very important in the theory of random walks on groups and in geometric group theory. It is remarkable that  the first examples of graphs whose spectrum is a Cantor set of Lebesgue measure zero, or the union of a Cantor set with a countable set of isolated points, have been obtained in the frame of Schreier graphs generated by automaton groups \cite{hecke}. In this context, the self-similar form of the generators reflects into the block structure of the adjacency matrix and in some special cases, an appropriate manipulation allows to find recursive formulae for the determination of the spectrum \cite{perez2, hanoi, marked, ihara}. This method produces the sequence of spectra corresponding to finite levels, and this sequence approximates the spectrum corresponding to the boundary action.  It is worth mentioning here that such approximation approach might also fail. In the case of the so called Basilica group, the situation seems to be more complicated and the renormalization of the infinite graph instead of the finite approximation is used (see \cite{bas} for more details).  \\
In the present work we want to study the two problems introduced above for the Schreier graphs associated with an infinite family of automaton groups.
More precisely this paper can be seen as a natural continuation of the paper  \cite{articolo0}, where we defined a particular class of automaton groups, called \emph{graph automaton groups}: starting from a graph $G=(V,E)$, we defined an invertible automaton $\mathcal{A}_G$ and then considered the associated group $\mathcal{G}_G$, whose generators are in a $1$-to-$1$ correspondence with $E$, and which acts by automorphisms on the regular rooted tree of degree $|V|$. The automaton $\mathcal{A}_G$ is bounded, so that the group $\mathcal{G}_G$ is amenable. Under the hypothesis $|E|\geq 2$, we showed that $\mathcal{G}_G$ is a fractal group which is weakly regular branch over its commutator subgroup $\mathcal{G}'_G$; moreover, it contains elements of finite order and has a number of torsion relators coming from directed cycles in $G$. It turns out that right angled Artin groups project onto the corresponding group obtained from the graph by this construction, which shows by the way that right angled Artin groups have amenable fractal weakly branch quotients. We also studied in \cite{articolo0} some properties of finite Schreier graphs associated with $\mathcal{G}_G$ when $G$ is a path graph or a cycle.

In the present paper, we consider a special class of graph automaton groups, obtained from a graph $G$ which is a star. We call such groups \emph{star automaton groups}. The star graph on $p+1$ vertices, consisting of a central vertex of degree $p$ and $p$ leaves, is denoted by $S_p$. Schreier graphs associated with the action of $\mathcal{G}_{S_p}$ on the regular rooted tree $T_{p+1}$ and its boundary $\partial T_{p+1}$ are the main object of research of this paper.\\
\indent In Section \ref{sectionpreliminaries} we recall the construction of graph automaton groups, together with the notion of finite and infinite Schreier graphs. We also recall some basic facts about the Ihara zeta function, both for a finite regular graph and for an infinite graph obtained as limit of a sequence of finite regular graphs; in particular, we focus on its integral representation by means of the KNS spectral measure.\\
\indent Section \ref{spectralsection} is devoted to spectral computations for finite and infinite Schreier graphs associated with the group $\mathcal{G}_{S_p}$. In Subsection \ref{secspectral3} all the details for the case $p=3$ are given. We construct the adjacency matrices of the finite Schreier graphs: by using the Schur complement technique, we find a recursive description of their characteristic polynomials in terms of a quadratic map in Theorem \ref{th13}. In Theorem \ref{propspettro}, the spectra of these matrices are explicitly described. Then, using amenability of the group $\mathcal{G}_{S_3}$, we prove in Theorem \ref{th23} that the spectrum of any infinite Schreier graph associated with $\mathcal{G}_{S_3}$ is the union of a Cantor set of zero Lebesgue measure, which is the Julia set of the quadratic map, and a countable collection of isolated points supporting the KNS spectral measure. The knowledge of the KNS spectral measure is then used to obtain an integral representation of the Ihara zeta function. The results obtained for the case $p=3$ are extended to the general case of any star graph $S_p$, and they are presented in Theorem \ref{th1p}, Theorem \ref{propspettrop}, and Theorem \ref{thmspettrop} of Subsection \ref{secspectralp}.\\
\indent Section \ref{sectionschreier} is devoted to the investigation of topological and isomorphism properties of Schreier graphs associated with $\mathcal{G}_{S_p}$. The topological investigation developed for the finite case in Subsection \ref{sectionfinite} is preliminary to the results obtained in Subsection \ref{sectioninfinite} in the infinite case, where we are able to classify, up to isomorphism of unrooted graphs, all infinite orbital Schreier graphs. We show that the limit graphs may have $1$, $2$ or $2p$ ends. In Theorem \ref{fini} we give an explicit classification of infinite Schreier graphs of $\mathcal{G}_{S_p}$ in terms of infinite words in $\{0,1,\ldots, p\}$, by characterizing the elements of the boundary of the tree $T_{p+1}$ belonging to a graph with $1$, $2$, or $2p$ ends, showing that there exist uncountably many $1$-ended and $2$-ended orbits, but exactly one $2p$-ended orbit. Moreover, the case of $1$ end is generic with respect to the uniform measure on $\partial T_{p+1}$. In Theorem \ref{teo_iso}, we provide necessary and sufficient conditions for two elements of $\partial T_{p+1}$ to belong to isomorphic infinite Schreier graphs. In particular, we prove that there exists one isomorphism class of $2p$-ended graphs, consisting of one orbit; there exist uncountably many isomorphism classes of $2$-ended graphs, each consisting of $2p$ graphs; there exist uncountably many isomorphism classes of $1$-ended graphs, each consisting of uncountably many graphs. Finally, each isomorphism class is proven to have zero measure in Corollary \ref{finalcorozero}.

\section{Preliminaries}\label{sectionpreliminaries}
In this preliminary section, we recall some basic definitions and properties about automaton groups and their Schreier graphs, focusing on the special class of automaton groups, called graph automaton groups, which has been introduced by the authors in \cite{articolo0}. We also recall the notion of KNS spectral measure and Ihara zeta function, which will be investigated in Section \ref{spectralsection} in the case of star automaton groups.

\subsection{Graph automaton groups and Schreier graphs}
Let us start be recalling the basic definition of automaton.
\begin{definition}
\indent An \textit{automaton} is a quadruple $\mathcal{A} =
(S,X,\lambda,\eta)$, where:
\begin{enumerate}
\item $S$ is the set of states;
\item $X=\{1,2,\ldots, k\}$ is an alphabet;
\item $\lambda: S\times X \rightarrow S$ is the restriction map;
\item $\eta: S\times X \rightarrow X$ is the output map.
\end{enumerate}
\end{definition}
The automaton $\mathcal{A}$ is \textit{finite} if $S$ is finite and it is \textit{invertible} if, for all $s\in
S$, the transformation $\eta(s, \cdot):X\rightarrow X$ is a permutation of $X$. An automaton $\mathcal{A}$ can be visually
represented by its \textit{Moore diagram}: this is a directed labeled graph whose vertices are identified with the states of
$\mathcal{A}$. For every state $s\in S$ and every letter $x\in X$, the diagram has an arrow from $s$ to $\lambda(s,x)$
labeled by $x|\eta(s,x)$. A sink $id$ in $\mathcal{A}$ is a state with the property that $\lambda(id,x)=id$ and $\eta(id,x)=x$ for any $x\in X$.        \\
\indent An important class of automata is given by bounded automata \cite{sidki}. An automaton is said to be \emph{bounded} if the sequence of numbers of paths of length $n$ avoiding the sink state (along the directed edges of the Moore diagram) is bounded.

For each $n\geq 1$, let $X^n$ denote the set of words of length $n$ over the alphabet $X$ and put $X^0  = \{\emptyset\}$, where $\emptyset$ is the empty word. Then the action of $\mathcal{A}$ can be naturally extended to the infinite set $X^\ast= \bigcup_{n=0}^\infty X^n$ and to the set $X^\infty = \{x_1x_2x_3\ldots : x_i\in X\}$ of infinite words over $X$.\\
\indent For a state $s\in S$, we denote by $\mathcal{A}_s$ the transformation $\eta(s,\cdot)$ of $X^{\ast}\cup X^\infty$. Given the invertible automaton $\mathcal{A}$, the \textit{automaton group} generated by $\mathcal{A}$ is by definition the group generated by the transformations $\mathcal{A}_s$, for $s\in S$, and it is denoted $G(\mathcal{A})$. In the rest of the paper, we will often use the notation $s$ instead of $\mathcal{A}_s$. Notice that the action of $G(\mathcal{A})$ on $X^\ast$ preserves the sets $X^n$, for each $n$.\\
\indent It is a remarkable fact that an automaton group can be regarded in a very natural way as a group of automorphisms of the regular rooted tree $T_k$ in which each vertex has $|X|=k$ children, via the identification of the $k^n$ vertices of the $n$-th level of $T_k$ with the set $X^n$. Similarly, the action on $X^\infty$ can be regarded as an action on the boundary $\partial T_k$ of the tree, whose elements are infinite geodesic rays starting at the root of $T_k$. Notice that the set $X^\infty$ can be equipped with the direct product topology; it is totally disconnected and homeomorphic to the Cantor set. We will denote by $\nu$ the uniform measure on $X^\infty$ or, equivalently, on $\partial T_k$.

\indent The group $G(\mathcal{A})$  is said to be \textit{spherically transitive} if its action is transitive on $X^n$, for any $n$. Let $g\in G(\mathcal{A})$. The action of $g$ on $X^\ast$ can be factorized by considering the action on $X$ and $|X|$ restrictions as follows. Let $Sym(k)$ be the symmetric group on $k$ elements. Then an element $g\in G(\mathcal{A})$ can be represented as
\begin{eqnarray}\label{ssd}
g=(g_1,\ldots, g_{k})\sigma,
\end{eqnarray}
where $g_i:=\lambda(g,i)\in G(\mathcal{A})$ and $\sigma\in Sym(k)$ describes the action of $g$ on $X$. We say that Eq. \eqref{ssd} is the \emph{self-similar representation} of $g$. In the tree interpretation of Eq. \eqref{ssd}, the permutation $\sigma$ corresponds to the action of $g$ on the first level of $T_k$, and the automorphism $g_i$ is the restriction of the action of $g$ to the subtree (isomorphic to the whole $T_k$) rooted at the $i$-th vertex of the first level.
Finally, it is known that if the automaton $\mathcal{A}$ is bounded, then the group $G(\mathcal{A})$ is amenable (see, e.g., \cite{amenability bounded}).

In \cite{articolo0} we introduced the following construction associating an invertible automaton with a given finite graph.\\
\indent Let $G=(V,E)$ be a finite graph, where $V=\{x_1,\ldots, x_k\}$ is its vertex set and $E$ is its edge set. Let $E'$ be the set of edges, where an orientation of each edge has been chosen. Notice that elements in $E$ are unordered pairs of type $\{x_i,x_j\}$, whereas elements in $E'$ are ordered pairs of type $(x_i,x_j)$, meaning that the edge has been oriented from the vertex $x_i$ to the vertex $x_j$.\\
\indent We then define an automaton $\mathcal{A}_G=(E' \cup \{id\}, V, \lambda, \eta)$ such that:
\begin{itemize}
\item $E' \cup \{id\}$ is the set of states;
\item $V$ is the alphabet;
\item $\lambda: E'\times V\to E'$ is the restriction map such that, for each $e=(x,y)\in E'$, one has
$$
\lambda (e,z) = \left\{
                  \begin{array}{ll}
                    e & \hbox{if } z=x \\
                    id & \hbox{if } z\neq x;
                  \end{array}
                \right.
$$
\item $\eta: E'\times V\to V$ is the output map such that, for each $e=(x,y)\in E'$, one has
$$
\eta (e,z) = \left\{
                  \begin{array}{ll}
                    y & \hbox{if } z=x \\
                    x & \hbox{if } z=y \\
                    z & \hbox{if } z\neq x,y.
                  \end{array}
                \right.
$$
\end{itemize}
In other words, any directed edge $e=(x,y)$ is a state of the automaton $\mathcal{A}_G$ and it has just one restriction to itself (given by $\lambda(e,x)$) and all other restrictions to the sink $id$. Its action is nontrivial only on the letters $x$ and $y$, which are switched since $\eta(e,x)=y$ and $\eta(e,y)=x$. It is easy to check that
$\mathcal{A}_G$ is invertible for any $G$ and any choice of the orientation of the edges. The \emph{graph automaton group} $\mathcal{G}_G$ is  defined as the automaton group generated by $\mathcal{A}_G$. In \cite[Theorem 3.7]{articolo0} it is shown that, whenever $|E|\geq 2$, the automaton $\mathcal{A}_G$ is bounded, so that the group $\mathcal{G}_G$ is amenable; moreover, $\mathcal{G}_G$ is a fractal group and it is weakly regular branch over its commutator subgroup $\mathcal{G}'_G$.

For any integer $p\geq 1$, let $S_p = (V_p, E_p)$ denote the \emph{star graph} on $p+1$ vertices. Let us identify its vertex set $V_p$ with the set $\{0,1,2,\ldots, p\}$, where $0$ corresponds to the central vertex, which is the only vertex of degree $p$, and the $p$ leaves are identified with the vertex subset $\{1,2,\ldots, p\}$ (see Fig. \ref{vaccino} for the case $p=6$).

\begin{figure}[h]
\begin{center}
\psfrag{0}{$0$}\psfrag{1}{$1$}\psfrag{2}{$2$}\psfrag{3}{$3$}\psfrag{4}{$4$}\psfrag{5}{$5$}\psfrag{6}{$6$}
\includegraphics[width=0.3\textwidth]{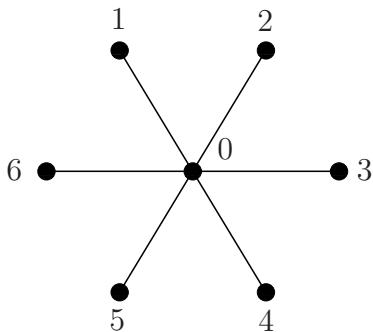}
\end{center}\caption{The star graph $S_6$.} \label{vaccino}
\end{figure}

In this paper we will deal with \emph{star automaton groups}, which are automaton groups obtained from $S_p$ following the construction described above. The star automaton group defined starting from the graph $S_p$ will be denoted $\mathcal{G}_{S_p}$.

We conclude this subsection by recalling the definition of finite and infinite Schreier graphs associated with an automaton group $G(\mathcal{A})$.

\begin{definition}
The $n$-th Schreier graph $\Gamma_n=(V_{\Gamma_n}, E_{\Gamma_n})$ of the action of $G(\mathcal{A})$ on $T_k$, with respect to a symmetric
generating set $S$, is the graph whose vertex set is $X^n$, where two vertices $u$ and $v$ are adjacent if and only if there exists $s\in S$ such that $s(u) = v$. If this is the case, the edge from $u$ to $v$ is labeled by $s$.
\end{definition}
Notice that the Schreier graph $\Gamma_n$ is a regular graph of degree $|S|$ on $k^n$ vertices and it
is connected for each $n$ under the hypothesis of spherical transitivity. For each $n\geq 1$, let $\pi_{n+1}:\Gamma_{n+1}\longrightarrow \Gamma_n$ be the map defined on $V_{\Gamma_{n+1}}$ as
$$
\pi_{n+1}(x_1\ldots x_nx_{n+1}) = x_1\ldots x_n.
$$
This map induces a surjective morphism from $\Gamma_{n+1}$ onto $\Gamma_n$, which is a graph covering of degree $k$.
In the rest of the paper, we will denote by $A_n$ the adjacency matrix of the Schreier graph $\Gamma_n$: by definition, this is a symmetric square matrix of size $k^n$ whose rows (and columns) sum to $|S|$. Since $A_n$ is symmetric, all its eigenvalues are real: they constitute the adjacency spectrum (or spectrum) of $\Gamma_n$. Notice that the normalized adjacency matrix of $\Gamma_n$, which is given by $\frac{1}{|S|}A_n$, can be regarded as the transition matrix of the Markov operator $M_n$ associated with the simple random walk on $\Gamma_n$.

For each $n\geq 1$, the Schreier graph $\Gamma_n$ is nothing but the orbital graph of the action of $G(\mathcal{A})$ on the $n$-th level of the tree $T_k$ or, equivalently, on the set $X^n$.
On the other hand, it also makes sense to consider orbital graphs associated with the action of $G(\mathcal{A})$ on $\partial T_k$ or, equivalently, on the set $X^\infty$. Since the action of $G(\mathcal{A})$ on $X^\infty$ has uncountably many orbits, there exist uncountably many distinct infinite Schreier graphs which are possibly nonisomorphic.\\
\indent Now take an infinite word $w=x_1x_2x_3\ldots \in X^\infty$, and denote by $w_n= x_1\ldots x_n\in X^n$ its prefix of length $n$.
It is known that the infinite Schreier graph $\Gamma_w$ describing the orbit of $w$ is approximated, as a rooted graph $(\Gamma_w, w)$, by the sequence of
finite Schreier graphs $(\Gamma_n, w_n)$, in the space of rooted graphs of uniformly bounded degree endowed with the Gromov-Hausdorff convergence, provided, for example, by the following metric: given two rooted graphs $(\Gamma_1, v_1)$ and $(\Gamma_2, v_2)$, one put
$$
dist((\Gamma_1, v_1),(\Gamma_2, v_2)) = \inf\left\{\frac{1}{r+1} : B_{\Gamma_1}(v_1,r) \textrm{ is isomorphic to } B_{\Gamma_2}(v_2,r)\right\},
$$
where $B_{\Gamma_i}(v_i,r)$ is the ball of radius $r$ in $\Gamma_i$ centered in $v_i$ (see Theorem 3 in \cite{marked}).

According to the theory developed, for instance, in \cite{hecke}, under the hypothesis of amenability of the group $G(\mathcal{A})$, the spectrum of any infinite orbital Schreier graph $\Gamma$ is obtained as
\begin{eqnarray*}
\textrm{spectrum}(\Gamma) = \overline{\bigcup_{n=0}^\infty \textrm{spectrum}(\Gamma_n)}.
\end{eqnarray*}

\subsection{Ihara zeta function}
In this section we recall the definition of Ihara zeta function for a finite regular graph $\Gamma$, which is an analogue of the Riemann's zeta function. For more details, the reader is referred to \cite{ihara}.

\begin{definition}
The Ihara zeta function $\zeta_\Gamma(t)$ for a finite regular graph $\Gamma$ is the function
$$
\zeta_\Gamma(t) = \exp\left(\sum_{r=1}^\infty \frac{c_rt^r}{r}\right),
$$
where $c_r$ is the number of closed, oriented loops of length $r$ in the graph $\Gamma$.
\end{definition}

It is also known that the Ihara zeta function of a finite regular graph $\Gamma= (V_\Gamma, E_\Gamma)$ of degree $k$ satisfies the equation
$$
\zeta_\Gamma(t) = (1-t^2)^{-\frac{k-2}{2}|V_\Gamma|} \det (1-tkM +(k-1)t^2)^{-1},
$$
where $M$ is the Markov operator on $\Gamma$.

A notion of Ihara zeta function for an infinite rooted graph which is the limit of a sequence of finite regular rooted graphs can be given.
Let $(\Gamma_n,v_n)$ be a sequence of finite rooted graphs regular of degree $k$ converging to the limit graph $(\Gamma,v)$, and let $M_n$ be the Markov operator on $\Gamma_n$ whose transition matrix is the normalized adjacency matrix of $\Gamma_n$.
The eigenvalues $\lambda_{i,n}$ of the operator $M_n$ are said to be equidistributed with respect to a measure $\mu$ which has support in $[-1,1]$
if the sequence of counting measures
\begin{eqnarray}\label{mcountingm}
\mu_n= \sum_{i=1}^{|V_{\Gamma_n}|} \frac{\delta_{\lambda_{i,n}}}{|V_{\Gamma_n}|}
\end{eqnarray}
weakly converges to the measure $\mu$. Moreover, it is known that given a covering sequence $(\Gamma_n,v_n)$ of finite $k$-regular graphs, with associated Markov operators $M_n$, the eigenvalues of $M_n$ are equidistributed with respect to some measure $\mu$, which is called the Kesten-Neumann-Serre (KNS) spectral measure of the limit graph $\Gamma$. In particular
\begin{eqnarray*}
\frac{1}{|V_{\Gamma_n}|}\ln\zeta_{\Gamma_n}(t) &=&  \sum_{r=1}^\infty \frac{c_r(\Gamma_n)t^r}{|V_{\Gamma_n}|r}\\
&=& -\frac{k-2}{2} \ln(1-t^2)-\frac{1}{|V_{\Gamma_n}|} \ln\det (1-tkM_n +(k-1)t^2).
\end{eqnarray*}
When $n$ goes to $\infty$, one gets:
\begin{eqnarray*}
\ln\zeta_\Gamma(t)= \lim_{n\to \infty} \frac{1}{|V_{\Gamma_n}|}\ln\zeta_{\Gamma_n}(t)  = \sum_{r=1}^\infty \frac{\widetilde{c_r}t^r}{r},
\end{eqnarray*}
where $\widetilde{c_r}$ is the limit of the sequence $\frac{c_r(\Gamma_n)}{|V_{\Gamma_n}|}$. Moreover, the KNS spectral measure is uniquely determined by the Ihara zeta function $\zeta_\Gamma(t)$ according to the equation
$$
\ln \zeta_\Gamma(t) = -\frac{k-2}{2}\ln(1-t^2)-\int_{-1}^1 \ln(1-tk\lambda +(k-1)t^2) d\mu(\lambda), \quad \forall \ t : |t|<\frac{1}{k-1}.
$$
We will apply this machinery in the setting of infinite orbital Schreier graphs, obtained as limits of sequences of finite Schreier graphs, for the star automaton group $\mathcal{G}_{S_p}$.

\section{Spectrum of Schreier graphs of the star automaton group $\mathcal{G}_{S_p}$} \label{spectralsection}

This section is devoted to the computation of the spectrum of both finite and infinite Schreier graphs associated with the action of the star automaton group $\mathcal{G}_{S_p}$ on the set $X^\ast \cup X^\infty$, where $X=\{0,1,\ldots, p\}$, or equivalently, on the regular rooted tree $T_{p+1}$ and on its boundary. Since the same argument holds for every $p$, we prefer to present the explicit computation for the case $p=3$ for the convenience of the reader; then we will extend the claim to the general case.

\subsection{The case $p=3$}\label{secspectral3}

Consider the oriented star graph $S_3$ on the four vertices $\{0,1,2,3\}$ depicted in Fig. \ref{figureS3}.
\begin{figure}[h]
\begin{center}
\psfrag{a}{$a$}\psfrag{b}{$b$}\psfrag{c}{$c$}\psfrag{0}{$0$}\psfrag{1}{$1$}\psfrag{2}{$2$}\psfrag{3}{$3$}
\includegraphics[width=0.25\textwidth]{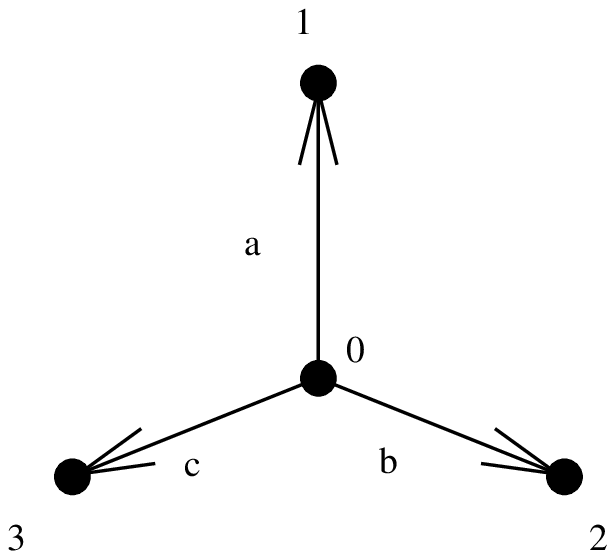}
\end{center}\caption{The oriented star graph $S_3$.} \label{figureS3}
\end{figure}

The automaton associated with such orientation of $S_3$ is given in Fig. \ref{automatonS3}.
\begin{figure}[h]
\begin{center}
\footnotesize
\psfrag{a}{$a$}\psfrag{b}{$b$}\psfrag{c}{$c$}\psfrag{id}{$id$}\psfrag{ia}{$1|0, 2|2, 3|3$}\psfrag{ib}{$2|0, 1|1, 3|3$}\psfrag{ic}{$3|0, 1|1, 2|2$}\psfrag{ta}{$0|1$}\psfrag{tb}{$0|2$}\psfrag{tc}{$0|3$}
\includegraphics[width=0.40\textwidth]{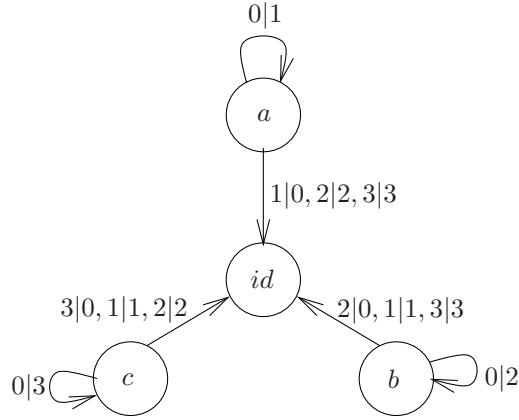}
\end{center}\caption{The automaton associated with the graph $S_3$ of Fig. \ref{figureS3}.} \label{automatonS3}
\end{figure}

In particular, the star automaton group $\mathcal{G}_{S_3}$ is the group generated by the three automorphisms having the following self-similar representation (see \cite{articolo0}):
\begin{eqnarray}\label{generators3}
a=(a, id, id, id)(01) \qquad b=(b,id,id,id)(02) \qquad c=(c,id,id,id)(03).
\end{eqnarray}
Moreover, one has:
$$
a^{-1}=(id, a^{-1}, id, id)(01) \qquad b^{-1}=(id,id,b^{-1},id)(02) \qquad c^{-1}=(id,id,id,c^{-1})(03).
$$
Let us denote by $a_n, b_n,c_n$ the permutation matrices of size $4^n$ describing the action of the automorphisms $a,b,c$, respectively, on the set $\{0,1,2,3\}^n$, so that the adjacency matrix $A_n$ of the $n$-th Schreier graph $\Gamma_n$ is given by
$$
A_n = a_n+ a_n^{-1}+ b_n+ b_n^{-1}+c_n+ c_n^{-1}.
$$
From Eq. \eqref{generators3} we get:
$$
a_{n+1}= \left(
           \begin{array}{c|c|c|c}
             0 & a_n & 0 & 0 \\ \hline
             I_n & 0 & 0 & 0 \\ \hline
             0 & 0 & I_n & 0 \\ \hline
             0 & 0 & 0 & I_n \\
           \end{array}
         \right)            \quad  b_{n+1}= \left(
           \begin{array}{c|c|c|c}
             0 & 0 & b_n & 0 \\ \hline
             0 & I_n & 0 & 0 \\ \hline
             I_n & 0 & 0 & 0 \\ \hline
             0 & 0 & 0 & I_n \\
           \end{array}
         \right)          \quad  c_{n+1}= \left(
           \begin{array}{c|c|c|c}
             0 & 0 & 0 & c_n \\ \hline
             0 & I_n & 0 & 0 \\ \hline
             0 & 0 & I_n & 0 \\ \hline
             I_n & 0 & 0 & 0 \\
           \end{array}
         \right)
$$
where $I_n$ is the identity matrix of size $4^n$ and $0$ is the zero matrix of size $4^n$. Similarly:
$$
a^{-1}_{n+1}= \left(
           \begin{array}{c|c|c|c}
             0 & I_n & 0 & 0 \\ \hline
             a_n^{-1} & 0 & 0 & 0 \\ \hline
             0 & 0 & I_n & 0 \\ \hline
             0 & 0 & 0 & I_n \\
           \end{array}
         \right)            \   b_{n+1}^{-1}= \left(
           \begin{array}{c|c|c|c}
             0 & 0 & I_n & 0 \\ \hline
             0 & I_n & 0 & 0 \\ \hline
             b_n^{-1} & 0 & 0 & 0 \\ \hline
             0 & 0 & 0 & I_n \\
           \end{array}
         \right)          \   c_{n+1}^{-1}= \left(
           \begin{array}{c|c|c|c}
             0 & 0 & 0 & I_n \\ \hline
             0 & I_n & 0 & 0 \\ \hline
             0 & 0 & I_n & 0 \\ \hline
             c_n^{-1} & 0 & 0 & 0 \\
           \end{array}
         \right).
$$
Notice that
$$
a_1=a_1^{-1}= \left(
           \begin{array}{cccc}
             0 & 1 & 0 & 0 \\
             1 & 0 & 0 & 0 \\
             0 & 0 & 1 & 0 \\
             0 & 0 & 0 & 1 \\
           \end{array}
         \right)         \  b_1=b_1^{-1}= \left(
           \begin{array}{cccc}
             0 & 0 & 1 & 0 \\
             0 & 1 & 0 & 0 \\
             1 & 0 & 0 & 0 \\
             0 & 0 & 0 & 1 \\
           \end{array}
         \right)        \  c_1=c_1^{-1}= \left(
           \begin{array}{cccc}
             0 & 0 & 0 & 1 \\
             0 & 1 & 0 & 0 \\
             0 & 0 & 1 & 0 \\
             1 & 0 & 0 & 0 \\
           \end{array}
         \right).
$$
Therefore, the adjacency matrix of the Schreier graph $\Gamma_{n+1}$ is
\begin{eqnarray*}
A_{n+1} = \left(
           \begin{array}{c|c|c|c}
             0 & a_n+I_n & b_n+I_n & c_n+I_n \\ \hline
             a_n^{-1}+I_n & 4I_n & 0 & 0 \\ \hline
             b_n^{-1}+I_n & 0 & 4I_n & 0 \\ \hline
             c_n^{-1}+I_n & 0 & 0 & 4I_n \\
           \end{array}
         \right),  \mbox{with }  A_1 = \left(
           \begin{array}{cccc}
             0 & 2 & 2 & 2 \\
             2 & 4  & 0 & 0 \\
             2 & 0 & 4  & 0 \\
             2 & 0 & 0 & 4  \\
           \end{array}
         \right).
\end{eqnarray*}

We will make use of the following well known result about determinant computation via the Schur complement formula (see, for instance, \cite{schur}).
\begin{lemma}\label{lemmaschur}
Let $M= \left(
          \begin{array}{c|c}
            A & B \\ \hline
            C & D \\
          \end{array}
        \right)$ be a block matrix, where $A$ has size $k\times k$, $B$ has size $k\times (n-k)$, $C$ has size $(n-k)\times k$, and $D$ has size $(n-k)\times (n-k)$. If $D$ is nonsingular, one has
$$
\det M = \det D \cdot \det (A - BD^{-1}C),
$$
where the matrix $M/D := A - BD^{-1}C$ is called the Schur complement of $D$.
\end{lemma}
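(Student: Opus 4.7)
The plan is to prove the Schur complement identity by producing an explicit block triangular factorization of $M$ and then taking determinants. Concretely, I would verify by direct multiplication that
\[
\begin{pmatrix} A & B \\ C & D \end{pmatrix}
=
\begin{pmatrix} I_k & B D^{-1} \\ 0 & I_{n-k} \end{pmatrix}
\begin{pmatrix} A - B D^{-1} C & 0 \\ 0 & D \end{pmatrix}
\begin{pmatrix} I_k & 0 \\ D^{-1} C & I_{n-k} \end{pmatrix},
\]
which is well defined precisely because $D$ is invertible. This reduces the problem to computing the determinant of each of the three factors separately.

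Next I would use two elementary facts: a block upper (or lower) triangular matrix with identity blocks on the diagonal has determinant $1$, and a block diagonal matrix has determinant equal to the product of the determinants of its diagonal blocks. Applying these to the factorization above immediately gives
\[
\det M = 1 \cdot \det(A - B D^{-1} C)\cdot \det D \cdot 1 = \det D \cdot \det(A - B D^{-1} C),
\]
as claimed.

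Since this is a classical identity, there is no genuine obstacle: the only thing to be careful about is the bookkeeping in the block multiplication check, especially the cross term $B D^{-1} \cdot D^{-1} C \cdot D + (A - B D^{-1} C)$ collapsing to $A$ in the top-left block, and the correct use of the formula for the determinant of block triangular matrices (which can itself be derived by row/column expansion or, alternatively, by a density/perturbation argument passing from invertible $A - BD^{-1}C$ to the general case). For the purposes of the present paper the statement is invoked as a black box, so a short direct verification along the lines above suffices, and I would favor the explicit factorization approach because it also makes transparent how the formula will later be iterated on the nested block structure of $A_{n+1}$.
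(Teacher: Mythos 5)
Your proof is correct. Note that the paper itself gives no proof of this lemma at all: it is stated as ``the following well known result'' with a citation to the literature (the reference \cite{schur}) and is used as a black box, so there is no argument of the paper's to compare yours against. Your block $UDL$ factorization is the standard derivation, and both elementary facts you invoke (determinant of a unitriangular block matrix is $1$; determinant of a block diagonal matrix is the product of the block determinants) are legitimate and easy to justify. One tiny slip in your parenthetical bookkeeping remark: the cross term in the top-left block of the triple product is $BD^{-1}\cdot D\cdot D^{-1}C = BD^{-1}C$ (the factors $D$ and $D^{-1}C$ appear in that order, coming from the $(2,2)$ block of the middle factor and the $(2,1)$ block of the right factor), not $BD^{-1}\cdot D^{-1}C\cdot D$; either way it collapses to give $A - BD^{-1}C + BD^{-1}C = A$, so the verification goes through unchanged. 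Your closing observation is also apt: the explicit factorization makes clear why the identity iterates cleanly on the nested block structure of $A_{n+1}$ in Theorem \ref{th13}.
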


\begin{theorem}\label{th13}
Let $P_n(\lambda)$ be the characteristic polynomial of the adjacency matrix $A_n$ of the  Schreier graph $\Gamma_n$, for each $n\geq 1$. Then
\begin{eqnarray}\label{ricorrenza}
P_{n+1}(\lambda) = (\lambda-4)^{2\cdot 4^n} P_n(f(\lambda)),
\end{eqnarray}
with $f(\lambda) = \lambda^2-4\lambda -6$ and $P_1(\lambda) = (\lambda-6)(\lambda+2)(\lambda-4)^2$.
\end{theorem}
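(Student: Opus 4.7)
The plan is to apply the Schur complement formula (Lemma \ref{lemmaschur}) directly to the block matrix $A_{n+1}-\lambda I$, treating the three lower-right scalar blocks $(4-\lambda)I_n$ as the invertible block $D$. Since
$$
A_{n+1}-\lambda I = \left(\begin{array}{c|c} -\lambda I_n & B \\ \hline C & (4-\lambda)I_{3\cdot 4^n} \end{array}\right),
$$
with $B=(a_n+I_n,\ b_n+I_n,\ c_n+I_n)$ and $C=B^{\!\ast}$ (the block column of the transposes), we have $\det D=(4-\lambda)^{3\cdot 4^n}$, and the Schur complement reduces to a single $4^n\times 4^n$ matrix.

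The heart of the computation is a clean algebraic simplification of $BD^{-1}C$. Since the off-diagonal blocks of $D$ vanish,
$$
BD^{-1}C=\frac{1}{4-\lambda}\Bigl[(a_n+I_n)(a_n^{-1}+I_n)+(b_n+I_n)(b_n^{-1}+I_n)+(c_n+I_n)(c_n^{-1}+I_n)\Bigr].
$$
Expanding each product gives $(x_n+I_n)(x_n^{-1}+I_n)=2I_n+x_n+x_n^{-1}$ for $x\in\{a,b,c\}$, and summing produces $6I_n+A_n$. Consequently
$$
-\lambda I_n-BD^{-1}C=\frac{(\lambda^2-4\lambda-6)I_n-A_n}{4-\lambda}=\frac{f(\lambda)I_n-A_n}{4-\lambda}.
$$

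Plugging into Schur's formula yields
$$
\det(A_{n+1}-\lambda I)=(4-\lambda)^{3\cdot 4^n}\cdot\frac{\det(f(\lambda)I_n-A_n)}{(4-\lambda)^{4^n}}=(4-\lambda)^{2\cdot 4^n}P_n(f(\lambda)),
$$
and since $4^{n+1}$ and $2\cdot 4^n$ are both even, the signs work out so that $P_{n+1}(\lambda)=(\lambda-4)^{2\cdot 4^n}P_n(f(\lambda))$, establishing \eqref{ricorrenza}. The only remaining task is to verify the base case by a direct spectral analysis of the explicit $4\times 4$ matrix $A_1$: a short inspection reveals the eigenvectors $(0,1,-1,0)$ and $(0,1,0,-1)$ yielding eigenvalue $4$, while the $G$-invariant subspace spanned by vectors of the form $(x,y,y,y)$ leads to the quadratic $\lambda^2-4\lambda-12=0$ with roots $6$ and $-2$, giving $P_1(\lambda)=(\lambda-6)(\lambda+2)(\lambda-4)^2$.

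I expect no serious obstacle here: the recursion is essentially dictated by the self-similar form of the generators in \eqref{generators3}, and the main subtlety is merely the bookkeeping of exponents and signs to ensure that the scalar factor becomes $(\lambda-4)^{2\cdot 4^n}$ rather than $(4-\lambda)^{\text{odd}}$. The algebraic miracle that makes a clean quadratic $f$ appear is precisely the collapse of the three cross terms into $6I_n+A_n$, which reflects the fact that $A_n$ itself is the sum of the six generator matrices.
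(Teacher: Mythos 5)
Your proposal is correct and follows essentially the same route as the paper: the Schur complement of the block $D=(4-\lambda)I_{3\cdot 4^n}$, the collapse of $\sum_x(x_n+I_n)(x_n^{-1}+I_n)$ into $6I_n+A_n$, and the resulting factor $(4-\lambda)^{2\cdot 4^n}\det(A_n-f(\lambda)I_n)$ are exactly the paper's computation. The only (harmless) difference is that you verify the base case $P_1$ by exhibiting eigenvectors of the explicit $4\times 4$ matrix, where the paper simply cites a direct computation.
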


\begin{proof}
A direct computation gives $P_1(\lambda)=(\lambda-6)(\lambda+2)(\lambda-4)^2$.\\
Now put $A'_{n+1}=A_{n+1}-\lambda I_{n+1}$, so that
\begin{eqnarray*}
P_{n+1}(\lambda) = \det A'_{n+1} = \det \left(
           \begin{array}{c|c|c|c}
             -\lambda I_n & a_n+I_n & b_n+I_n & c_n+I_n \\ \hline
             a_n^{-1}+I_n & (4-\lambda)I_n & 0 & 0 \\ \hline
             b_n^{-1}+I_n & 0 & (4-\lambda)I_n & 0 \\ \hline
             c_n^{-1}+I_n & 0 & 0 & (4-\lambda)I_n \\
           \end{array}
         \right).
\end{eqnarray*}
In order to compute $\det A'_{n+1}$, we use the Schur complement technique, where
$$
A=-\lambda I_n;    \quad B = \left(\begin{array}{c|c|c}
                               a_n+I_n & b_n+I_n & c_n+I_n
                             \end{array} \right);
\quad
C= \left(
                                                              \begin{array}{c}
                                                                a_n^{-1}+I_n \\
                                                                b_n^{-1}+I_n \\
                                                                c_n^{-1}+I_n \\
                                                              \end{array}
                                                            \right); \quad D=  (4-\lambda)I_{3n}.
$$
The Schur complement of the block $D=(4-\lambda)I_{3n}$ is given by
\begin{eqnarray*}
A'_{n+1}/D&=&
-\lambda I_n - \left(
                 \begin{array}{ccc}
                   a_n+I_n & b_n+I_n & c_n+I_n \\
                 \end{array}
               \right) \cdot \frac{1}{4-\lambda}I_{3n}\cdot \left(
                                                              \begin{array}{c}
                                                                a_n^{-1}+I_n \\
                                                                b_n^{-1}+I_n \\
                                                                c_n^{-1}+I_n \\
                                                              \end{array}
                                                            \right)\\
&=& -\lambda I_n -\frac{1}{4-\lambda} (a_n + a_n^{-1}+2I_n +b_n + b_n^{-1}+2I_n+c_n + c_n^{-1}+2I_n)\\
&=& \left(-\lambda-\frac{6}{4-\lambda}\right)I_n -\frac{1}{4-\lambda}A_n = \frac{\lambda^2-4\lambda-6}{4-\lambda}I_n - \frac{1}{4-\lambda}A_n.
\end{eqnarray*}
Therefore, we have
\begin{eqnarray*}
\det A_{n+1}' &=& \det D \cdot \det (A'_{n+1}/D)  \\
&=& (4-\lambda)^{3\cdot 4^n} \cdot \det \left(\frac{\lambda^2-4\lambda-6}{4-\lambda}I_n - \frac{1}{4-\lambda}A_n\right)\\
&=& (4-\lambda)^{2\cdot 4^n} \cdot \det (A_n -(\lambda^2-4\lambda-6)I_n).
\end{eqnarray*}
This completes the proof.
\end{proof}

\begin{remark}\rm
Observe that, if we define $\Gamma_0$ to be the graph consisting of a single vertex endowed with three loops, so that it is a regular graph of degree $6$ as the graph $\Gamma_n$ is for each $n\geq 1$, then we have $P_0(\lambda)= \lambda -6$ and Eq. \eqref{ricorrenza} still holds with $n=0$, since
$$
P_1(\lambda) = (\lambda-4)^2 \cdot P_0(f(\lambda)),
$$
because $P_0(f(\lambda)) = \lambda^2-4\lambda-12 = (\lambda-6)(\lambda+2)$.
\end{remark}

In Fig. \ref{alf1} and Fig. \ref{alf2} the Schreier graphs $\Gamma_n$, for $n=1,2,3$, associated with the group $\mathcal{G}_{S_3}$ are depicted. Vertices are labeled by words in $\{0,1,2,3\}^n$.

\begin{figure}[h]
\begin{center}
\psfrag{1}{$1$}\psfrag{2}{$2$}\psfrag{3}{$3$}\psfrag{0}{$0$}
\psfrag{00}{$00$}\psfrag{01}{$01$}\psfrag{02}{$02$}\psfrag{03}{$03$}
\psfrag{10}{$10$}\psfrag{11}{$11$}\psfrag{12}{$12$}\psfrag{13}{$13$}
\psfrag{20}{$20$}\psfrag{21}{$21$}\psfrag{22}{$22$}\psfrag{23}{$23$}
\psfrag{30}{$30$}\psfrag{31}{$31$}\psfrag{32}{$32$}\psfrag{33}{$33$}
\psfrag{C12}{$C^1_2$}
\includegraphics[width=0.85\textwidth]{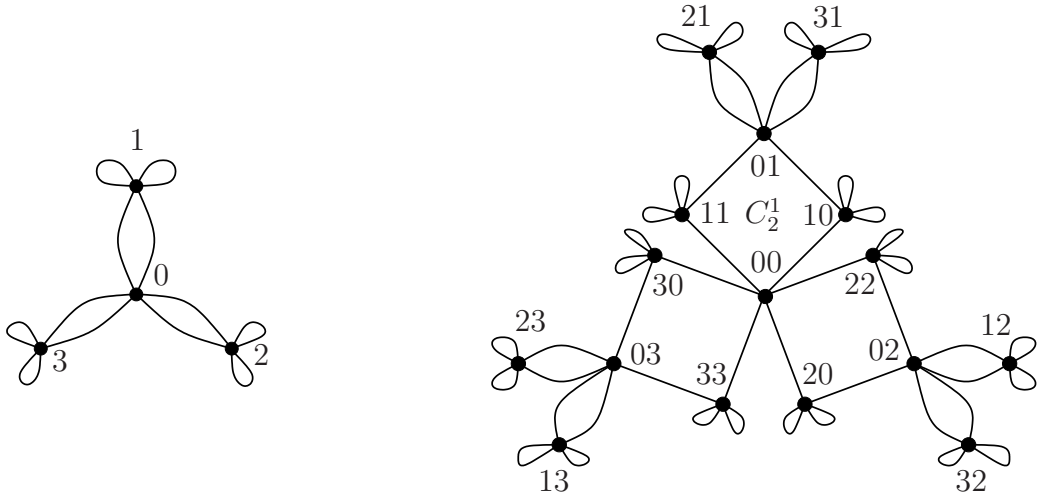}
\end{center}\caption{The Schreier graphs $\Gamma_1$ and $\Gamma_2$ associated with $\mathcal{G}_{S_3}$.} \label{alf1}
\end{figure}

\begin{figure}[h]
\begin{center}    \footnotesize
\psfrag{000}{$000$}\psfrag{001}{$001$}\psfrag{002}{$002$}\psfrag{003}{$003$}
\psfrag{010}{$010$}\psfrag{011}{$011$}\psfrag{012}{$012$}\psfrag{013}{$013$}
\psfrag{020}{$020$}\psfrag{021}{$021$}\psfrag{022}{$022$}\psfrag{023}{$023$}
\psfrag{030}{$030$}\psfrag{031}{$031$}\psfrag{032}{$032$}\psfrag{033}{$033$}

\psfrag{100}{$100$}\psfrag{101}{$101$}\psfrag{102}{$102$}\psfrag{103}{$103$}
\psfrag{110}{$110$}\psfrag{111}{$111$}\psfrag{112}{$112$}\psfrag{113}{$113$}
\psfrag{120}{$120$}\psfrag{121}{$121$}\psfrag{122}{$122$}\psfrag{123}{$123$}
\psfrag{130}{$130$}\psfrag{131}{$131$}\psfrag{132}{$132$}\psfrag{133}{$133$}

\psfrag{200}{$200$}\psfrag{201}{$201$}\psfrag{202}{$202$}\psfrag{203}{$203$}
\psfrag{210}{$210$}\psfrag{211}{$211$}\psfrag{212}{$212$}\psfrag{213}{$213$}
\psfrag{220}{$220$}\psfrag{221}{$221$}\psfrag{222}{$222$}\psfrag{223}{$223$}
\psfrag{230}{$230$}\psfrag{231}{$231$}\psfrag{232}{$232$}\psfrag{233}{$233$}

\psfrag{300}{$300$}\psfrag{301}{$301$}\psfrag{302}{$302$}\psfrag{303}{$303$}
\psfrag{310}{$310$}\psfrag{311}{$311$}\psfrag{312}{$312$}\psfrag{313}{$313$}
\psfrag{320}{$320$}\psfrag{321}{$321$}\psfrag{322}{$322$}\psfrag{323}{$323$}
\psfrag{330}{$330$}\psfrag{331}{$331$}\psfrag{332}{$332$}\psfrag{333}{$333$}
 \includegraphics[width=0.85\textwidth]{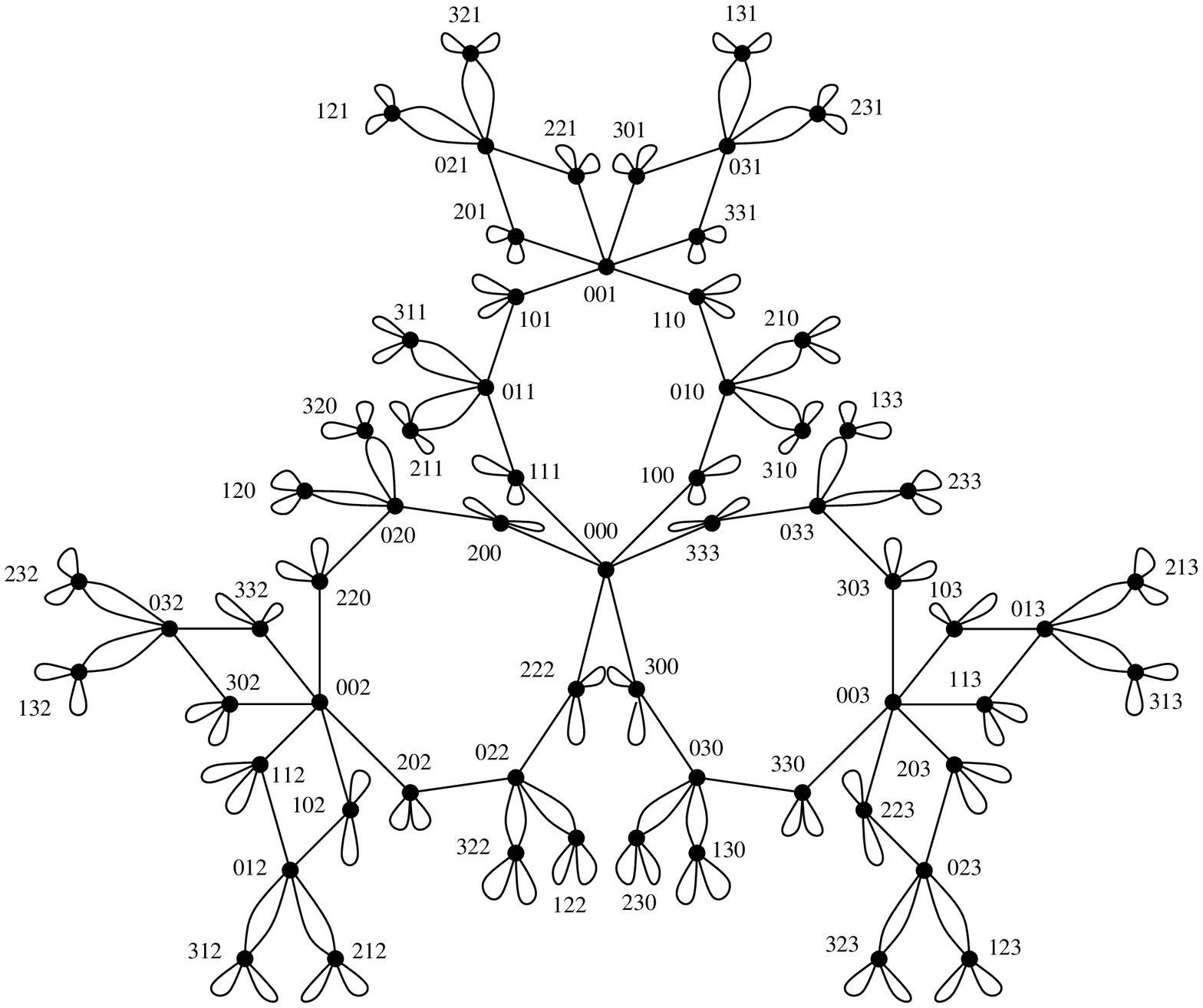}
\end{center}\caption{The Schreier graph $\Gamma_3$ associated with $\mathcal{G}_{S_3}$.} \label{alf2}
\end{figure}

\begin{theorem}\label{propspettro}
For each $n \geq 1$, the following factorization of the characteristic polynomial $P_n(\lambda)$ holds:
\begin{eqnarray}\label{riformula}
P_n(\lambda) = (\lambda -6) \cdot \prod_{i=0}^{n-1}\left(f^{\circ i}(\lambda) +2\right) \cdot \prod_{i=0}^{n-1}\left(f^{\circ i}(\lambda) -4\right)^{2\cdot 4^{n-i-1}},
\end{eqnarray}
where $f^{\circ i}(\lambda) =  \underbrace{f(f(\ldots f(\lambda)))}_{i \textrm{ times}}$. In particular, the adjacency spectrum of the graph $\Gamma_n$ is
$$
\{6\}\sqcup  \left(\bigcup_{i=0}^{n-1} f^{-i}(-2)\right) \sqcup \left(\bigcup_{i=0}^{n-1} (f^{-i}(4))^{2\cdot 4^{n-i-1}}\right).
$$
\end{theorem}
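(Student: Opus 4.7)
The plan is to prove the factorization formula by induction on $n$, using the recursion from Theorem~\ref{th13}, and then to read off the spectrum from this factorization together with elementary facts about the forward dynamics of the quadratic map $f(\lambda) = \lambda^2 - 4\lambda - 6$.

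For the base case $n=1$, formula \eqref{riformula} reduces to $(\lambda-6)(\lambda+2)(\lambda-4)^2$, which agrees with $P_1(\lambda)$. For the inductive step, assume the factorization holds for $n$. By Theorem~\ref{th13} one has $P_{n+1}(\lambda) = (\lambda-4)^{2\cdot 4^n} P_n(f(\lambda))$. Substituting $f(\lambda)$ in place of $\lambda$ in the inductive hypothesis and using $f^{\circ i}(f(\lambda)) = f^{\circ(i+1)}(\lambda)$, the two products over $i=0,\ldots,n-1$ become products over $j=1,\ldots,n$, with the exponent in the second one becoming $2\cdot 4^{n-j}$. The key identity is
\begin{equation*}
f(\lambda) - 6 = \lambda^2 - 4\lambda - 12 = (\lambda-6)(\lambda+2),
\end{equation*}
which shows that the leading factor $f(\lambda)-6$ coming from the inductive hypothesis splits as $(\lambda-6)\cdot(f^{\circ 0}(\lambda)+2)$, extending the first product down to $j=0$. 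Absorbing the prefactor $(\lambda-4)^{2\cdot 4^n} = (f^{\circ 0}(\lambda)-4)^{2\cdot 4^n}$ likewise extends the second product down to $j=0$, which is exactly the claimed formula for $P_{n+1}(\lambda)$.

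Once \eqref{riformula} is established, the multiset of roots of $P_n(\lambda)$ is immediately read off: one root $\lambda=6$, the roots of $f^{\circ i}(\lambda)=-2$ for $i=0,\ldots,n-1$ (each simple), and the roots of $f^{\circ i}(\lambda)=4$ for $i=0,\ldots,n-1$ with multiplicity $2\cdot 4^{n-i-1}$. To obtain the spectrum as the disjoint union stated in the theorem, I would verify that the three families are pairwise disjoint by comparing forward orbits under $f$: since $f(-2)=6$ and $f(6)=6$, the forward orbit of $-2$ is $\{-2,6\}$; since $f(4)=-6$ and the subsequent iterates are strictly increasing, the forward orbit of $4$ is disjoint from $\{-2,4,6\}$ after the first step. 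If a common root existed, applying a suitable iterate of $f$ would force an equality among these orbit values, which is impossible. As a sanity check, summing $|f^{-i}(-2)|=2^i$ and $|f^{-i}(4)|\cdot 2\cdot 4^{n-i-1} = 2^i \cdot 2\cdot 4^{n-i-1}$ over the relevant ranges, together with the single eigenvalue $6$, gives exactly $4^n$, matching $|V_{\Gamma_n}|$.

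The technical heart of the argument is the clean telescoping in the inductive step, which works precisely because $6$ is a fixed point of $f$ (equivalently, $\lambda-6$ divides $f(\lambda)-6$); I expect the only real bookkeeping obstacle to be keeping track of product indices and multiplicities. The disjointness verification is short but necessary in order to upgrade the factorization into a precise description of the spectrum as a disjoint union.
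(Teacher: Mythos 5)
Your proof is correct and follows essentially the same route as the paper: induction on $n$ via the recurrence of Theorem~\ref{th13}, with the telescoping driven by the identity $f(\lambda)-6=(\lambda-6)(\lambda+2)$. The only addition is your explicit disjointness check via forward orbits, a detail the paper leaves implicit here (it reappears in the proof of Theorem~\ref{th23}), and your verification is sound since the forward orbit of $-2$ is $\{-2,6\}$ while that of $4$ escapes to infinity.
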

\begin{proof}
Let $f(\lambda)= \lambda^2-4\lambda-6$ as in Theorem \ref{th13}. The factorization given in Eq. \eqref{riformula} can be proved by induction on $n$, using the recurrence
$$
\left\{
  \begin{array}{ll}
    P_{n+1}(\lambda) & =  (\lambda-4)^{2\cdot 4^n} P_n(f(\lambda)) \\
    P_1(\lambda) & =  (\lambda-6)(\lambda+2)(\lambda-4)^2
  \end{array}
\right.
$$
obtained in Theorem \ref{th13}, and using the fact that
$$
f(\lambda) - 6 =  \lambda^2-4\lambda-12 = (\lambda -6)(\lambda +2),
$$
which also implies that the iterated backward orbit of $6$ under $f$ can be written as
\begin{eqnarray*}
f^{-n}(6) = \{6\} \sqcup \left(\bigcup_{i=0}^{n-1} f^{-i}(-2)\right), \qquad \forall \ n \geq 1.
\end{eqnarray*}
The claim about the adjacency spectrum follows.
\end{proof}

\begin{remark}\label{16nov}
The eigenvalues of $A_n$ given in Theorem \ref{propspettro} can be described more explicitly. In particular, a direct computation gives
$$
f^{-1}(-2) = \left\{2\pm 2\sqrt{2}\right\}, \qquad f^{-2}(-2) = \left\{2\pm \sqrt{12\pm 2\sqrt{2}}\right\}
$$
and, in general, it can be shown by induction that
$$
f^{-n}(-2)= \left\{2\pm \sqrt{12 \pm \sqrt{12 \pm \sqrt{\ldots \pm 2\sqrt{2}}}}\right\},   \ n\geq 1
$$
where the double sign $\pm$ occurs $n$ times. Similarly, one has
$$
f^{-1}(4) = \left\{2\pm \sqrt{14}\right\}, \qquad f^{-2}(4) = \left\{2\pm \sqrt{12\pm \sqrt{14}}\right\}
$$
and in general
$$
f^{-n}(4)= \left\{2\pm \sqrt{12 \pm \sqrt{12 \pm\sqrt{ \ldots \pm \sqrt{14}}}}\right\},  \ n\geq 1
$$
where also in this case the double sign $\pm$ occurs $n$ times. In Fig. \ref{figspettro} the histogram of the spectrum of the Schreier graph $\Gamma_6$ of the group $\mathcal{G}_{S_3}$ (in logarithmic scale) is represented.
\end{remark}

\begin{lemma}\label{nested}
Let $a,b>0$ and let $\{a_n\}_{n\geq 1}$ be the sequence defined by recursion as
$$
\left\{
  \begin{array}{ll}
a_1=\sqrt{b}\\
a_{n+1} = \sqrt{a+a_n}, \qquad n\geq 1.
  \end{array}
\right.
$$
If $\sqrt{b} < \frac{1}{2}(1+\sqrt{1+4a})$, then the sequence $\{a_n\}_{n\geq 1}$ is increasing and
$$
\lim_{n\to \infty}a_n = \frac{1}{2}(1+\sqrt{1+4a}).
$$
\end{lemma}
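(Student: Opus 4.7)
The plan is to recognize $L := \tfrac{1}{2}(1+\sqrt{1+4a})$ as the unique positive fixed point of the map $g(x)=\sqrt{a+x}$: it is the positive root of $x^2-x-a=0$, which is exactly the equation one gets by equating $L$ with $\sqrt{a+L}$. Since $g$ is continuous and strictly increasing on $[0,\infty)$, the whole problem reduces to standard monotone-iteration arguments for an increasing contraction toward a fixed point from below.

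First I would verify the invariant $0<a_n<L$ by induction. The base case is the hypothesis $a_1=\sqrt{b}<L$. For the inductive step, assuming $a_n<L$, monotonicity of $g$ gives $a_{n+1}=g(a_n)<g(L)=L$. Simultaneously I would show by induction that $a_n<a_{n+1}$: the equivalent statement is $a_n < \sqrt{a+a_n}$, i.e., $a_n^2-a_n-a<0$, which holds precisely because $a_n$ lies strictly between the two roots of $x^2-x-a$, namely between $\tfrac{1}{2}(1-\sqrt{1+4a})<0$ and $L$. So $0<a_n<L$ forces $a_n^2-a_n-a<0$, hence $a_n<a_{n+1}$.

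Combining the two inductions, the sequence $\{a_n\}$ is increasing and bounded above by $L$, so by the monotone convergence theorem it admits a limit $\ell\in(0,L]$. Passing to the limit in the recurrence $a_{n+1}=\sqrt{a+a_n}$ and using the continuity of $g$ yields $\ell=\sqrt{a+\ell}$, i.e., $\ell^2-\ell-a=0$. Since $\ell>0$, the only admissible root is $\ell=L$, which closes the proof.

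I do not anticipate a real obstacle: the only mildly delicate point is making sure the hypothesis $\sqrt{b}<L$ is used in the right place (it guarantees both that the iteration starts inside the basin of attraction and, via the sign of $a_n^2-a_n-a$, that the sequence is strictly increasing). If instead one had $\sqrt{b}>L$, the sequence would be decreasing toward $L$; the stated hypothesis merely fixes which side of the fixed point we approach from.
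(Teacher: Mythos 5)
Your proof is correct and follows essentially the same route as the paper: show the sequence is increasing and bounded above, invoke monotone convergence, pass to the limit in the recurrence to get $\ell^2-\ell-a=0$, and discard the negative root. The only difference is that you spell out the induction (the invariant $0<a_n<L$ and the sign of $a_n^2-a_n-a$) that the paper dismisses as "easy to show", which is a welcome level of detail but not a different argument.
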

\begin{proof}
It is easy to show by induction that the sequence $\{a_n\}_{n\geq 1}$ is increasing and bounded, so that it admits a finite limit $\ell$. By squaring, one can see that such a limit $\ell$ must satisfy the equation $\ell^2-\ell -a=0$, whose solutions are $\ell = \frac{1\pm \sqrt{1+4a}}{2}$. The solution corresponding to the sign $-$ cannot be accepted, since it must be $\ell >0$, and we get the claim.
\end{proof}

\begin{theorem}\label{th23}
The spectrum of each infinite Schreier graph $\Gamma$ of $\mathcal{G}_{S_3}$ is the closure of the set of
points
$$
\{4\} \cup \left\{ 2\pm \underbrace{\sqrt{12 \pm \sqrt{12 \pm\sqrt{ \ldots \pm \sqrt{14}}}}}_{n \textrm{ times }},\ n \geq 1\right\}.
$$
This set is the union of a Cantor set of zero Lebesgue measure which is symmetric about $2$ and a
countable collection of isolated points supporting the KNS spectral measure $\mu$, which is discrete and which has value $\frac{1}{2\cdot 4^n}$ at the points whose definition involves $n$ radicals, for $n\geq 1$, and value $\frac{1}{2}$ at the point $4$.
\end{theorem}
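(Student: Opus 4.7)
The plan is to combine three ingredients: amenability of $\mathcal{G}_{S_3}$, the real dynamics of the quadratic $f(\lambda)=\lambda^2-4\lambda-6$, and the multiplicity data recorded in Theorem~\ref{propspettro}. Since $\mathcal{A}_{S_3}$ is bounded, $\mathcal{G}_{S_3}$ is amenable, and therefore $\mathrm{spectrum}(\Gamma)=\overline{\bigcup_n \mathrm{spectrum}(\Gamma_n)}$ for every infinite orbital Schreier graph $\Gamma$. By Theorem~\ref{propspettro} this equals the closure of $\{6\}\cup\bigcup_{i\ge 0}f^{-i}(-2)\cup\bigcup_{i\ge 0}f^{-i}(4)$. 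The affine change of variable $\lambda=w+2$ conjugates $f$ to $w^2-10$, whose unique critical point $0$ escapes to infinity; hence $f$ is hyperbolic, and by Sullivan the Julia set $J(f)$ is a Cantor subset of $\mathbb{R}$ of zero Lebesgue measure, symmetric about $\lambda=2$.

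Next I would separate the two kinds of points in the closure. A direct computation gives $f(6)=6$ with $|f'(6)|=8$, so $6$ is a repelling fixed point and lies in $J(f)$; since $f(-2)=6$, complete invariance of $J(f)$ places $\{6\}\cup\bigcup_{i\ge 0}f^{-i}(-2)$ inside $J(f)$. On the other hand $f(4)=-6$ and $|f^n(-6)|\to\infty$, so $4$ belongs to the open basin of infinity, which is completely invariant and disjoint from $J(f)$; hence so does every preimage of $4$. Because $f$ has no finite super-attracting fixed point, its exceptional set in $\mathbb{C}$ is empty, and the classical density-of-preimages theorem (Brolin) gives that the backward orbit of every finite point is dense in $J(f)$; applied to $6$ this yields $\overline{\{6\}\cup\bigcup_i f^{-i}(-2)}=J(f)$, while the preimages of $4$ lie in an open set and therefore remain isolated in their closure, merely accumulating on $J(f)$. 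Consequently the full closure is $J(f)\sqcup\bigcup_{i\ge 0}f^{-i}(4)$, and the nested-radical expressions of Remark~\ref{16nov} (whose convergence is ensured by Lemma~\ref{nested}) realize the isolated part as exactly the set displayed in the statement.

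For the KNS measure I would use the multiplicities of Theorem~\ref{propspettro}: the factor $(\lambda-6)$ and each of the $2^i$ roots of $f^{\circ i}(\lambda)+2$ have multiplicity $1$, while each of the $2^i$ roots of $f^{\circ i}(\lambda)-4$ has multiplicity $2\cdot 4^{n-i-1}$ (a quick check that $-2$, $4$ are not periodic for $f$ shows these roots are pairwise distinct). Dividing by $|V_{\Gamma_n}|=4^n$, the counting measure $\mu_n$ of~\eqref{mcountingm} carries total mass $2^{-n}$ on $\{6\}\cup\bigcup_{i<n}f^{-i}(-2)$ and stable mass $1/(2\cdot 4^i)$ on each point of $f^{-i}(4)$. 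Testing against an arbitrary continuous $\phi$, the first piece is bounded by $\|\phi\|_\infty\cdot 2^{-n}\to 0$, while the second converges absolutely to $\sum_{i\ge 0}\sum_{\lambda\in f^{-i}(4)}\phi(\lambda)/(2\cdot 4^i)$; a geometric-series check gives total mass $1$, identifying $\mu$ as the discrete measure in the statement. The step I expect to be the most delicate is precisely this last one: the preimages of $-2$ are dense in the Cantor set $J(f)$ while their individual masses vanish, so a priori they could conspire to produce a diffuse component of $\mu$ on $J(f)$. The uniform bound $2^{-n}$ on their total mass against any continuous test function is what rules this out and forces $\mu$ to be purely atomic on the isolated points.
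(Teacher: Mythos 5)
Your proposal is correct and follows essentially the same route as the paper: amenability to reduce to $\overline{\bigcup_n \mathrm{spectrum}(\Gamma_n)}$, the dynamics of $f(\lambda)=\lambda^2-4\lambda-6$ (repelling fixed point $6$, escaping forward orbit of $4$, conjugation to a real quadratic $z^2+c$ with $c<-2$ giving a Cantor Julia set of measure zero), and the multiplicities of Theorem~\ref{propspettro} to compute the weak limit of the counting measures — indeed your final paragraph justifying the absence of a diffuse component via the uniform bound $2^{-n}$ on the mass carried by $\{6\}\cup\bigcup_{i<n}f^{-i}(-2)$ is spelled out more carefully than in the paper. One small arithmetic slip: the substitution $\lambda=w+2$ conjugates $f$ to $w\mapsto w^2-12$, not $w^2-10$ (you computed $f(w+2)$ but forgot to subtract $2$ to return to the $w$-coordinate); this is harmless since both constants lie below $-2$, so the Cantor-set conclusion is unaffected.
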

\begin{proof}
Since the group $\mathcal{G}_{S_3}$ is amenable, the spectrum of each infinite Schreier  graph $\Gamma$ of $\mathcal{G}_{S_3}$ is given by
$$
\overline{\{6\}\sqcup  \left(\bigcup_{i=0}^{\infty} f^{-i}(-2)\right) \sqcup \left(\bigcup_{i=0}^{\infty} f^{-i}(4)\right)}.
$$
Let us investigate the dynamics of the quadratic map $f(\lambda)= \lambda^2-4\lambda -6$. As $f'(\lambda) = 2\lambda-4$, the unique
critical point of $f$ is $\lambda_0 = 2$. Therefore, the critical value $f(2) = -10$ is the unique value of $x$ such that the equation $f(\lambda) = x$ has a double root.

Now observe that Lemma \ref{nested} returns the limit value $4$ for $a=12$. It follows that, for each $n$, the spectrum of $\Gamma_n$ is contained in the interval $[-2,6]$. Now, it is easy to check that
$$
f^{-1}[-2, 6] = [-2, 2-2\sqrt{2}]\cup [2+2\sqrt{2},6] \subseteq [-2, 6].
$$
Since the critical value $-10\not\in f^{-1}[-2, 6] \subseteq [-2, 6]$ it follows that, for any
value of $x$ in $[-2, 6]$, the entire backward orbit $f^{-i}(x)$ is still contained in $[-2, 6]$ and
the sets $f^{-i}(x)$, for each $i \geq 0$, consist of $2^i$ distinct real numbers. Moreover it is known that, for such $x$, the sets $f^{-i}(x)$ are mutually disjoint for $i\geq 0$, provided $x$ is not a periodic point (a point $x$ is periodic if $f^k(x) = x$ for some positive integer $k$).

In our case, the forward orbit of $4$ under $f$ goes to $\infty$, so that $4$ is not a periodic
point and the sets $f^{-i}(4)$ are mutually disjoint, for $i\geq 0$. On the other hand, since $f(6) = 6$, so that $6$ is a fixed point for $f$,
the point $-2$ is not periodic and the sets $f^{-i}(-2)$ are mutually disjoint for $i\geq 0$. In particular, it follows that the number of distinct eigenvalues of the graph $\Gamma_n$ is
$$
1 + 2\sum_{i=0}^{n-1}2^i = 2^{n+1}-1, \qquad \textrm{for each } n\geq 1.
$$
Recall now that a periodic point $x$ of $f$ is repelling if $|f'(x)| > 1$. Since $f(6) = 6$ and
$f'(6) = 8>0$, the point $6$ is a repelling fixed point for the polynomial $f$. This implies
that the backward orbit $\{6\} \sqcup \left(\bigcup_{i=0}^{\infty} f^{-i}(-2)\right)$
of $6$ is in the Julia set $J$ of $f$, which is, by definition, the closure of the set of repelling periodic points of $f$ \cite{devaney}.\\
\indent On the other hand, the value $4$ is not in the Julia set, since its
forward orbit goes to $\infty$, and therefore the set $\bigcup_{i=0}^{\infty} (f^{-i}(4))$ is a countable
set of isolated points that accumulates to the Julia set $J$. It follows that the spectrum of $\Gamma$ is given by
$$
\overline{\bigcup_{i=0}^{\infty} f^{-i}(4)},
$$
where, for each $i$, the set $f^{-i}(4)$ has been described in Remark \ref{16nov}. Notice that the Julia set $J$ of $f$ is a Cantor set, since the map $f$ is conjugate via the map $F(z) = z+2$ to the quadratic map
$$
z \mapsto z^2-12,
$$
that is, $(F^{-1}\circ f \circ F) = z^2-12$, and $-12<-2$ (see Section 3.2 in \cite{devaney}).
Recall that the KNS spectral measure $\mu$ is limit of the
counting measures $\mu_n$ defined for $\Gamma_n$ as in Eq. \eqref{mcountingm}. We also know that in the spectrum of $\Gamma_n$, each eigenvalue in $\{6\} \sqcup \left(\bigcup_{i=0}^{n-1} f^{-i}(-2)\right)$ has multiplicity $1$, whereas each
eigenvalue  in $f^{-i}(4)$ has multiplicity $2\cdot 4^{n-i-1}$ for each $i$. Now
$$
\lim_{n\to \infty} \frac{2\cdot 4^{n-i-1}}{4^n} = \frac{1}{2\cdot 4^i}, \textrm{ for each } i \geq 0.
$$
Being $\sum_{i=0}^\infty \frac{2^i}{2\cdot 4^i}=1$, the KNS spectral measure is discrete and concentrated at these eigenvalues.
\end{proof}

\begin{figure}[h]
\begin{center}
\includegraphics[width=1\textwidth]{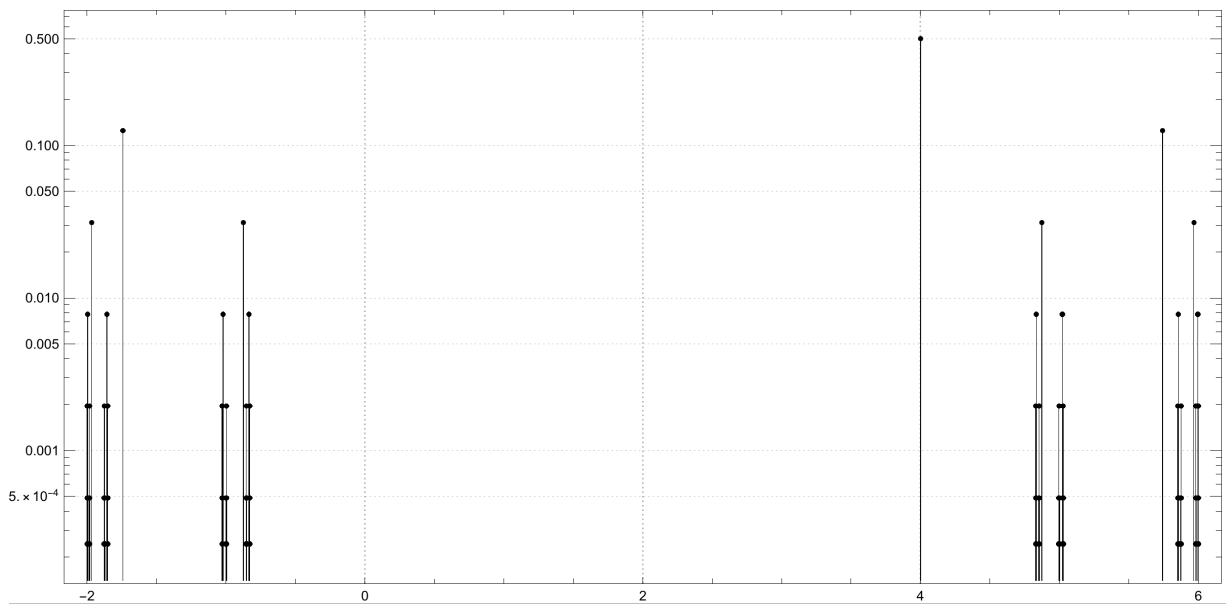}
\end{center}   \caption{The histogram of the spectrum of the Schreier $\Gamma_6$ of the group $\mathcal{G}_{S_3}$.}\label{figspettro}
\end{figure}

The Ihara zeta function $\zeta_n(t)$ of the Schreier graph $\Gamma_n$ of $\mathcal{G}_{S_3}$ satisfies the equation
$$
\zeta_n(t) =(1-t^2)^{-2\cdot 4^n}\det(1-tA_n+5t^2)^{-1},
$$
where $A_n$ is the adjacency matrix of $\Gamma_n$. When passing to the limit, the following integral presentation holds:
$$
\ln \zeta_\Gamma(t) = -2\ln(1-t^2)-\int_{-1}^1 \ln(1-6t\lambda +5t^2) d\mu(\lambda), \quad \forall \ t : |t|<\frac{1}{5}
$$
where $\mu$ is the KNS spectral measure and $\lambda$ runs over the normalized spectrum of $\Gamma$. In our case we get, for each $t$ such  that $|t|<\frac{1}{5}$:
\begin{eqnarray*}
\ln \zeta_\Gamma(t) &=& -2\ln(1-t^2)- \frac{1}{2}\ln(1-4t+5t^2)\\
 &-&\frac{1}{2}\sum_{i=1}^\infty \frac{1}{4^{i}}\ln \left(1-t\left(\underbrace{2\pm\sqrt{12\pm \sqrt{12\pm\sqrt{ \ldots \pm\sqrt{14}}}}}_{\pm \ i \textrm{ times }}\right)+5t^2\right).   \nonumber
\end{eqnarray*}

\subsection{The general case}\label{secspectralp}
Let $p\geq 1$ be an integer number. The aim of this subsection is to generalize what we have seen in the previous subsection for the graph $S_3$ to the more general context of a star graph $S_p$ on $p+1$ vertices. We will not give all the details presented in the case $p=3$.

The star automaton group $\mathcal{G}_{S_p}$ is the group generated by $p$ automorphisms $e_i$, $i=1,\ldots, p$, having the following self-similar representation:
\begin{eqnarray}\label{gigi}
e_i=(e_i,id,\ldots, id)(0i) \quad \textrm{ for each } i=1,\ldots, p.
\end{eqnarray}
Notice that
$$
e_i^{-1}=(id,\ldots, id,\underbrace{e_i^{-1}}_{(i+1)\textrm{-th place}}, id,\ldots, id)(0i) \quad \textrm{ for each } i=1,\ldots, p.
$$
The group $\mathcal{G}_{S_p}$ acts on the rooted tree $T_{p+1}$. The $n$-th level of such tree consists of $(p+1)^n$ vertices, identified with the set of words of length $n$ over the alphabet $\{0,1,\ldots, p\}$. As a consequence, the $n$-th Schreier graph is a regular graph of degree $2p$ on $(p+1)^n$ vertices, and its adjacency matrix $A_n$ is a symmetric matrix of size $(p+1)^n$. We will adopt the notation $\Gamma^p_n$ to denote the $n$-th Schreier graph associated with the action of $\mathcal{G}_{S_p}$.
The following theorem holds.

\begin{theorem}\label{th1p}
Let $P_n(\lambda)$ be the characteristic polynomial of the adjacency matrix $A_n$ of the Schreier graph $\Gamma^p_n$ of the group $\mathcal{G}_{S_p}$, for each $n\geq 1$. Then
\begin{eqnarray*}
P_{n+1}(\lambda) = (\lambda-2(p-1))^{(p-1)\cdot (p+1)^n} P_n(f_p(\lambda)),
\end{eqnarray*}
with $f_p(\lambda) = \lambda^2-2(p-1)\lambda -2p$ and $P_1(\lambda) = (\lambda-2p)(\lambda+2)(\lambda-2(p-1))^{p-1}$.
\end{theorem}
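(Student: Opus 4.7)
The plan is to adapt the Schur complement strategy used in the proof of Theorem \ref{th13} to arbitrary $p$. First I would read off from the self-similar representations in Eq. \eqref{gigi} the block forms, with blocks of size $(p+1)^n$, of the permutation matrices $e_{i,n+1}$ and $e_{i,n+1}^{-1}$: for each $i\in\{1,\ldots,p\}$, the matrix $e_{i,n+1}$ has $e_{i,n}$ in block position $(0,i)$, the identity $I_n$ in position $(i,0)$, zero in the diagonal positions $(0,0)$ and $(i,i)$, $I_n$ in the remaining diagonal positions, and zeros elsewhere, and analogously for $e_{i,n+1}^{-1}$ with the two off-diagonal roles swapped. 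Summing the $2p$ resulting matrices, the adjacency matrix $A_{n+1}$ takes the block form
\begin{equation*}
A_{n+1} = \left(
\begin{array}{c|c|c|c}
0 & e_{1,n}+I_n & \cdots & e_{p,n}+I_n \\ \hline
e_{1,n}^{-1}+I_n & 2(p-1)I_n & & 0 \\ \hline
\vdots & & \ddots & \\ \hline
e_{p,n}^{-1}+I_n & 0 & & 2(p-1)I_n
\end{array}
\right),
\end{equation*}
where the scalar $2(p-1)$ on the diagonal arises because exactly $p-1$ generators stabilize the $i$-th letter of the first level and each contributes $2I_n$ to that diagonal block.

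Next, I would apply Lemma \ref{lemmaschur} to the matrix $A_{n+1}-\lambda I$ with the lower-right block $D=(2(p-1)-\lambda)I_{p(p+1)^n}$, which is a scalar multiple of the identity. The key computation is
\begin{equation*}
B D^{-1} C \;=\; \frac{1}{2(p-1)-\lambda}\sum_{i=1}^{p}\bigl(e_{i,n}+I_n\bigr)\bigl(e_{i,n}^{-1}+I_n\bigr) \;=\; \frac{2pI_n+A_n}{2(p-1)-\lambda},
\end{equation*}
using the identity $(e_{i,n}+I_n)(e_{i,n}^{-1}+I_n)=2I_n+e_{i,n}+e_{i,n}^{-1}$ and summing over $i$. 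The Schur complement therefore equals $\frac{1}{2(p-1)-\lambda}\bigl(f_p(\lambda)I_n-A_n\bigr)$, because the scalar part $-\lambda(2(p-1)-\lambda)-2p$ collapses to $\lambda^2-2(p-1)\lambda-2p=f_p(\lambda)$. Taking determinants and cancelling the factor $(2(p-1)-\lambda)^{(p+1)^n}$ coming from the scaling against the factor $(2(p-1)-\lambda)^{p(p+1)^n}$ coming from $\det D$, I would obtain the recurrence $P_{n+1}(\lambda)=(\lambda-2(p-1))^{(p-1)(p+1)^n}P_n(f_p(\lambda))$.

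For the base case I would verify $P_1(\lambda)=(\lambda-2p)(\lambda+2)(\lambda-2(p-1))^{p-1}$ directly from the explicit form of $A_1$, which has a $0$ in the central diagonal entry, twos on the rest of the first row and column, and $2(p-1)$ on the remaining diagonal. The $(p-1)$-dimensional subspace of vectors supported on the leaves and summing to zero is an eigenspace with eigenvalue $2(p-1)$; its orthogonal complement is the two-dimensional subspace spanned by the indicator of the central vertex and the sum of indicators of the leaves, on which $A_1$ restricts to $\left(\begin{smallmatrix}0 & 2p \\ 2 & 2(p-1)\end{smallmatrix}\right)$, with characteristic polynomial $\lambda^2-2(p-1)\lambda-4p=(\lambda-2p)(\lambda+2)$. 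The main obstacle I anticipate is the bookkeeping of signs and exponents in the determinant step, since $(p+1)^n$ need not be even for general $p$; however, the total exponent $p(p+1)^n+(p-1)(p+1)^n=2p(p+1)^n$ is always even, so once the convention $P_n(\lambda)=\det(\lambda I-A_n)$ is fixed the recurrence emerges cleanly, exactly as for $p=3$.
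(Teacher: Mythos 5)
Your proposal is correct and follows exactly the route the paper intends: the paper omits the details for general $p$ and defers to the Schur complement argument of Theorem \ref{th13}, and your block decomposition, the computation $BD^{-1}C=\frac{1}{2(p-1)-\lambda}(2pI_n+A_n)$, and the exponent bookkeeping are the faithful generalization of that argument. Your explicit verification of the base case $P_1$ and your observation that the total sign exponent $2p(p+1)^n$ is always even are correct and fill in precisely the details the paper leaves to the reader.
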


Moreover, one can still define $\Gamma^p_0$ to be the graph consisting of a single vertex endowed with $p$ loops. In this way, one has $P_0(\lambda)= \lambda -2p$ and the equation
$$
P_1(\lambda) = (\lambda-2(p-1))^{p-1} \cdot P_0(f_p(\lambda)),
$$
is still satisfied.

\begin{theorem}\label{propspettrop}
For each $n \geq 1$, the following factorization of the characteristic polynomial $P_n(\lambda)$ holds:
$$
P_n(\lambda) = (\lambda -2p) \cdot \prod_{i=0}^{n-1}\left(f_p^{\circ i}(\lambda) +2\right) \cdot \prod_{i=0}^{n-1}\left(f_p^{\circ i}(\lambda) -2(p-1)\right)^{(p-1)\cdot (p+1)^{n-i-1}},
$$
where $f_p^{\circ i}(\lambda) =  \underbrace{f_p(f_p(\ldots f_p(\lambda)))}_{i \textrm{ times}}$. In particular, the adjacency spectrum of the graph $\Gamma^p_n$ is
$$
\{2p\}\sqcup  \left(\bigcup_{i=0}^{n-1} f_p^{-i}(-2)\right) \sqcup \left(\bigcup_{i=0}^{n-1} (f_p^{-i}(2(p-1)))^{(p-1)\cdot (p+1)^{n-i-1}}\right).
$$
\end{theorem}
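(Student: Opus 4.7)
The plan is to prove the factorization of $P_n(\lambda)$ by induction on $n$, using the recurrence established in Theorem \ref{th1p}, and then read off the adjacency spectrum directly from the factorization.

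For the base case $n=1$, I would just check that the claimed formula, which reads
$$
P_1(\lambda) = (\lambda - 2p) \cdot (f_p^{\circ 0}(\lambda) + 2) \cdot (f_p^{\circ 0}(\lambda) - 2(p-1))^{(p-1)(p+1)^0},
$$
agrees with the expression $P_1(\lambda) = (\lambda - 2p)(\lambda + 2)(\lambda - 2(p-1))^{p-1}$ given in Theorem \ref{th1p}, which is immediate since $f_p^{\circ 0}(\lambda) = \lambda$.

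For the inductive step, the essential algebraic input is the identity
$$
f_p(\lambda) - 2p = \lambda^2 - 2(p-1)\lambda - 4p = (\lambda - 2p)(\lambda + 2),
$$
which expresses that $2p$ is a fixed point of $f_p$ and identifies $-2$ as the other preimage. Assuming the factorization for $n$, I would substitute $f_p(\lambda)$ in place of $\lambda$ and use $f_p^{\circ i}(f_p(\lambda)) = f_p^{\circ (i+1)}(\lambda)$ to rewrite
$$
P_n(f_p(\lambda)) = \bigl(f_p(\lambda) - 2p\bigr) \cdot \prod_{j=1}^{n}\bigl(f_p^{\circ j}(\lambda) + 2\bigr) \cdot \prod_{j=1}^{n}\bigl(f_p^{\circ j}(\lambda) - 2(p-1)\bigr)^{(p-1)(p+1)^{n-j}}.
$$
Applying the key identity to the first factor produces the missing $j=0$ term of the first product. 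Multiplying by $(\lambda - 2(p-1))^{(p-1)(p+1)^n}$ from the recurrence in Theorem \ref{th1p} then supplies the missing $j=0$ term of the second product with the correct exponent $(p-1)(p+1)^{(n+1)-0-1}$, and the formula for $n+1$ follows.

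Finally, the adjacency spectrum is read directly from the factorization: the factor $(\lambda - 2p)$ contributes the eigenvalue $2p$ with multiplicity one; each factor $f_p^{\circ i}(\lambda) + 2$ contributes the $(p+1)^i$ roots of $f_p^{\circ i}(\lambda) = -2$, namely the set $f_p^{-i}(-2)$, each with multiplicity one; and each factor $(f_p^{\circ i}(\lambda) - 2(p-1))^{(p-1)(p+1)^{n-i-1}}$ contributes the set $f_p^{-i}(2(p-1))$, each element with multiplicity $(p-1)(p+1)^{n-i-1}$. A brief remark on why these sets are pairwise disjoint would be needed for a complete count; this follows from observing that $2p$ and $2(p-1)$ are not periodic under $f_p$ (the forward orbit of $2(p-1)$ hits $-2$ and then stays bounded only if one enters the fixed point $2p$, which it does not in general), but for the purpose of the factorization itself, which is an identity of polynomials, disjointness of the roots is not required. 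The only genuinely delicate point of the argument is spotting and verifying the identity $f_p(\lambda) - 2p = (\lambda - 2p)(\lambda + 2)$ — the rest is bookkeeping of exponents and reindexing.
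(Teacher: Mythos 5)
Your proof is correct and follows essentially the same route as the paper: induction on $n$ via the recurrence of Theorem \ref{th1p}, with the key identity $f_p(\lambda)-2p=\lambda^2-2(p-1)\lambda-4p=(\lambda-2p)(\lambda+2)$ supplying the missing $i=0$ factor at each step, and the spectrum then read off the factorization. (Your parenthetical about the forward orbit of $2(p-1)$ is slightly off --- one computes $f_p(2(p-1))=-2p$ and the iterates then escape to infinity rather than staying bounded --- but, as you yourself note, disjointness of the root sets is not needed for the polynomial identity, and the paper likewise defers that discussion to Theorem \ref{thmspettrop}.)
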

\begin{proof}
The proof proceeds as in Theorem \ref{propspettro} and uses the fact that
\begin{eqnarray*}
f_p^{-n}(2p) = \{2p\} \sqcup \left(\bigcup_{i=0}^{n-1} f_p^{-i}(-2)\right),
\end{eqnarray*}
because it holds $\lambda^2-2(p-1)\lambda -4p = (\lambda-2p)(\lambda + 2)$.
\end{proof}

In Fig. \ref{alf3} and Fig. \ref{alf4} the Schreier graphs $\Gamma^2_n$, for $n=1,2,3,4$, associated with the group $\mathcal{G}_{S_2}$ are depicted. Vertices are labeled by words in $\{0,1,2\}^n$.
\begin{figure}[h]
\begin{center}           \footnotesize
\psfrag{1}{$1$}\psfrag{2}{$2$}\psfrag{0}{$0$}
\psfrag{00}{$00$}\psfrag{01}{$01$}\psfrag{02}{$02$}
\psfrag{10}{$10$}\psfrag{11}{$11$}\psfrag{12}{$12$}
\psfrag{20}{$20$}\psfrag{21}{$21$}\psfrag{22}{$22$}

\psfrag{000}{$000$}\psfrag{001}{$001$}\psfrag{002}{$002$}
\psfrag{010}{$010$}\psfrag{011}{$011$}\psfrag{012}{$012$}
\psfrag{020}{$020$}\psfrag{021}{$021$}\psfrag{022}{$022$}

\psfrag{100}{$100$}\psfrag{101}{$101$}\psfrag{102}{$102$}
\psfrag{110}{$110$}\psfrag{111}{$111$}\psfrag{112}{$112$}
\psfrag{120}{$120$}\psfrag{121}{$121$}\psfrag{122}{$122$}

\psfrag{200}{$200$}\psfrag{201}{$201$}\psfrag{202}{$202$}
\psfrag{210}{$210$}\psfrag{211}{$211$}\psfrag{212}{$212$}
\psfrag{220}{$220$}\psfrag{221}{$221$}\psfrag{222}{$222$}

\includegraphics[width=0.9\textwidth]{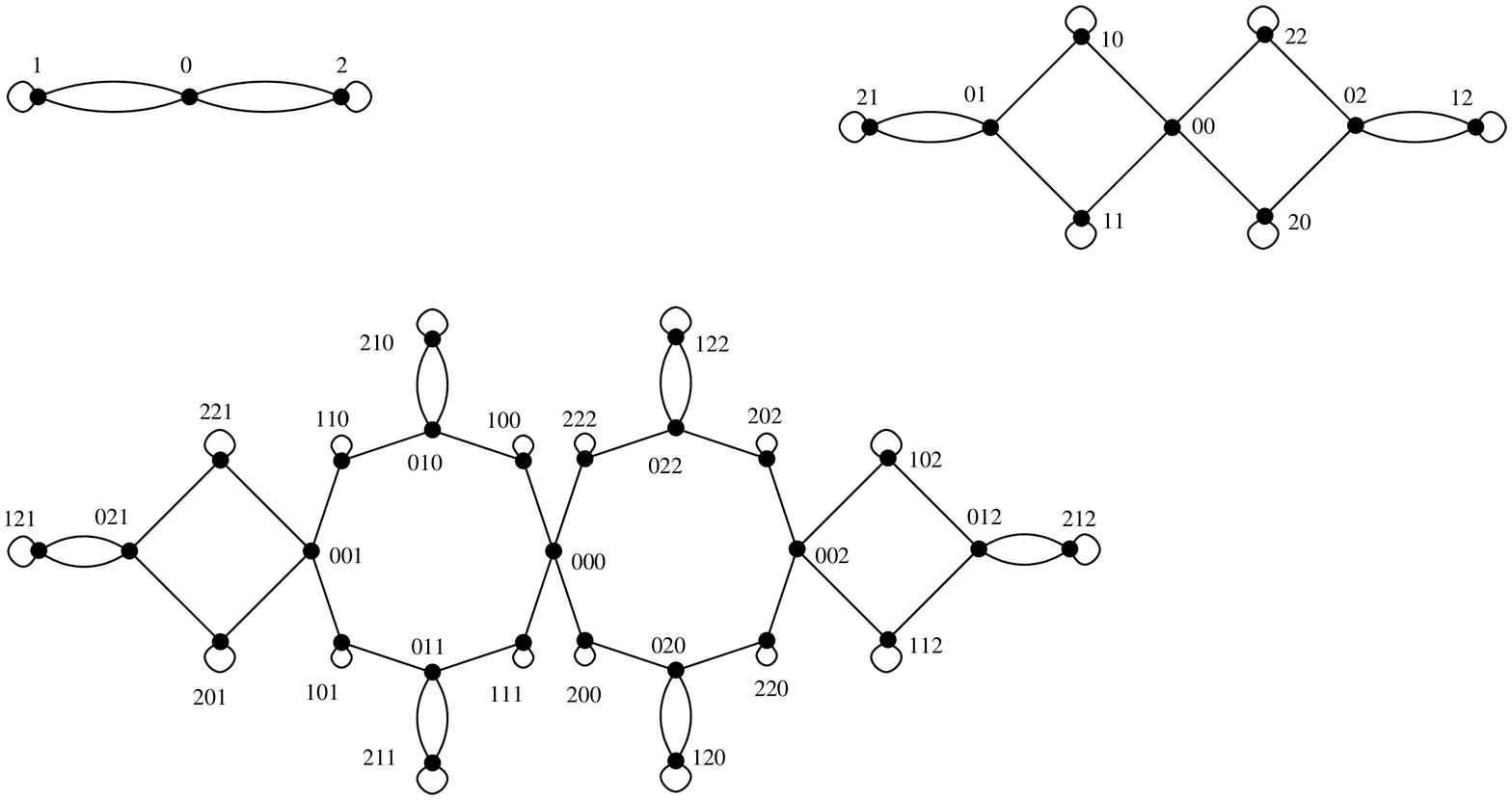}
\end{center}\caption{The Schreier graphs $\Gamma^2_n$ associated with $\mathcal{G}_{S_2}$, for $n=1,2,3$.} \label{alf3}
\end{figure}

\begin{figure}[h]
\begin{center}
\tiny
\psfrag{0000}{$0000$}\psfrag{0010}{$0010$}\psfrag{0020}{$0020$}
\psfrag{0100}{$0100$}\psfrag{0110}{$0110$}\psfrag{0120}{$0120$}
\psfrag{0200}{$0200$}\psfrag{0210}{$0210$}\psfrag{0220}{$0220$}

\psfrag{0001}{$0001$}\psfrag{0011}{$0011$}\psfrag{0021}{$0021$}
\psfrag{0101}{$0101$}\psfrag{0111}{$0111$}\psfrag{0121}{$0121$}
\psfrag{0201}{$0201$}\psfrag{0211}{$0211$}\psfrag{0221}{$0221$}

\psfrag{0002}{$0002$}\psfrag{0012}{$0012$}\psfrag{0022}{$0022$}
\psfrag{0102}{$0102$}\psfrag{0112}{$0112$}\psfrag{0122}{$0122$}
\psfrag{0202}{$0202$}\psfrag{0212}{$0212$}\psfrag{0222}{$0222$}

\psfrag{1000}{$1000$}\psfrag{1010}{$1010$}\psfrag{1020}{$1020$}
\psfrag{1100}{$1100$}\psfrag{1110}{$1110$}\psfrag{1120}{$1120$}
\psfrag{1200}{$1200$}\psfrag{1210}{$1210$}\psfrag{1220}{$1220$}

\psfrag{1001}{$1001$}\psfrag{1011}{$1011$}\psfrag{1021}{$1021$}
\psfrag{1101}{$1101$}\psfrag{1111}{$1111$}\psfrag{1121}{$1121$}
\psfrag{1201}{$1201$}\psfrag{1211}{$1211$}\psfrag{1221}{$1221$}

\psfrag{1002}{$1002$}\psfrag{1012}{$1012$}\psfrag{1022}{$1022$}
\psfrag{1102}{$1102$}\psfrag{1112}{$1112$}\psfrag{1122}{$1122$}
\psfrag{1202}{$1202$}\psfrag{1212}{$1212$}\psfrag{1222}{$1222$}

\psfrag{2000}{$2000$}\psfrag{2010}{$2010$}\psfrag{2020}{$2020$}
\psfrag{2100}{$2100$}\psfrag{2110}{$2110$}\psfrag{2120}{$2120$}
\psfrag{2200}{$2200$}\psfrag{2210}{$2210$}\psfrag{2220}{$2220$}

\psfrag{2001}{$2001$}\psfrag{2011}{$2011$}\psfrag{2021}{$2021$}
\psfrag{2101}{$2101$}\psfrag{2111}{$2111$}\psfrag{2121}{$2121$}
\psfrag{2201}{$2201$}\psfrag{2211}{$2211$}\psfrag{2221}{$2221$}

\psfrag{2002}{$2002$}\psfrag{2012}{$2012$}\psfrag{2022}{$2022$}
\psfrag{2102}{$2102$}\psfrag{2112}{$2112$}\psfrag{2122}{$2122$}
\psfrag{2202}{$2202$}\psfrag{2212}{$2212$}\psfrag{2222}{$2222$}
\includegraphics[width=1\textwidth]{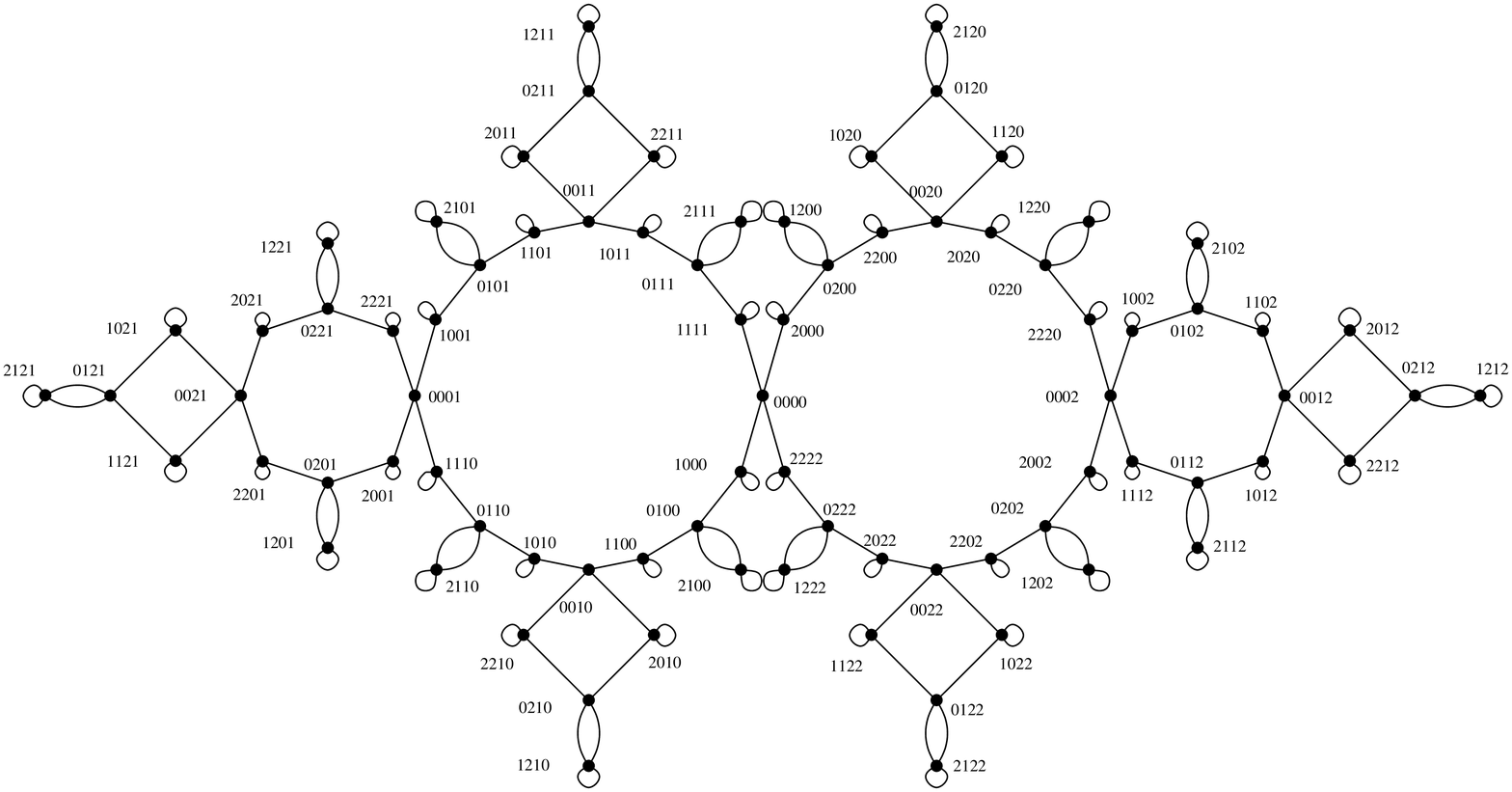}
\end{center}\caption{The Schreier graph $\Gamma^2_4$ associated with $\mathcal{G}_{S_2}$.} \label{alf4}
\end{figure}

\begin{theorem}\label{thmspettrop}
Let $p\geq 2$. The spectrum of each infinite Schreier graph $\Gamma^p$ of $\mathcal{G}_{S_p}$ is the closure of the set of
points
$$
\{2(p-1)\} \cup \left\{p-1\pm \underbrace{\sqrt{p^2+p \pm \sqrt{p^2+p \pm\sqrt{ \ldots \pm \sqrt{p^2+2p-1}}}}}_{n \textrm{ times }},\ n \geq 1\right\}.
$$
This set is the union of a Cantor set of zero Lebesgue measure which is symmetric about $p-1$ and a
countable collection of isolated points supporting the KNS spectral measure $\mu$, which is discrete and which has value $\frac{p-1}{(p+1)^{n+1}}$ at the points whose definition involves $n$ radicals, for $n\geq 1$, and value $\frac{p-1}{p+1}$ at the point $2(p-1)$.
 \end{theorem}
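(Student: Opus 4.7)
The strategy is to follow step by step the proof of Theorem~\ref{th23}, replacing the quadratic $f$ by $f_p$ throughout. First, amenability of $\mathcal{G}_{S_p}$ (cf.\ \cite{articolo0}) gives
\[
\mathrm{spectrum}(\Gamma^p) \;=\; \overline{\bigcup_{n \geq 0} \mathrm{spectrum}(\Gamma^p_n)},
\]
and Theorem~\ref{propspettrop} identifies the right-hand side with the closure of
\[
\{2p\} \,\cup\, \bigcup_{i \geq 0} f_p^{-i}(-2) \,\cup\, \bigcup_{i \geq 0} f_p^{-i}\bigl(2(p-1)\bigr).
\]
Because $f_p(\lambda) - 2p = (\lambda - 2p)(\lambda + 2)$, the first two sets together form the backward orbit of the fixed point $2p$, so only the dynamics of $f_p$ on these two backward orbits remain to be understood.

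Next, I analyze $f_p$ on $\mathbb{R}$. Its unique critical point is $\lambda_0 = p-1$, with critical value $f_p(p-1) = -p^2 - 1$. A direct check gives $f_p(-2) = f_p(2p) = 2p$, so $I := [-2, 2p]$ is mapped onto the fixed point $2p$ at its endpoints, $f_p^{-1}(I) \subseteq I$, and all $\Gamma^p_n$-eigenvalues lie in $I$. Since the critical value $-p^2 - 1$ lies outside $I$ for every $p \geq 2$, each set $f_p^{-i}(x)$ with $x \in I$ consists of $2^i$ distinct real numbers; the points $2p$ and $2(p-1)$ are both nonperiodic (the first because $f_p'(2p) = 2p+2 > 1$ makes it a repelling fixed point, the second because $2(p-1) \mapsto -2p \mapsto 8p^2 - 6p$ escapes to $+\infty$), so all sets $f_p^{-i}(2p)$ and $f_p^{-i}(2(p-1))$ are mutually disjoint across $i$.

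Since $2p$ is repelling, its backward orbit lies in and is dense in the Julia set $J_p$ of $f_p$. To identify $J_p$, I conjugate by $F(z) = z + (p-1)$: a short computation yields
\[
F^{-1} \circ f_p \circ F \;=\; z^2 - (p^2 + p),
\]
and since $p^2 + p > 2$ for $p \geq 2$, the standard fact about $z \mapsto z^2 - c$ with $c > 2$ (Section~3.2 in \cite{devaney}) shows that the Julia set of the conjugate is a Cantor set of zero Lebesgue measure, symmetric about $0$; translating back, $J_p$ is a Cantor set of zero Lebesgue measure symmetric about $p-1$. On the other hand, $\bigcup_{i \geq 0} f_p^{-i}(2(p-1))$ is a countable set of isolated points accumulating on $J_p$. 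The nested-radical formulas stated in the theorem follow by iterating the explicit preimage formula $f_p^{-1}(y) = \{(p-1) \pm \sqrt{p^2 + 1 + y}\}$, starting from $y = 2(p-1)$ (so the innermost radical is $\sqrt{p^2 + 2p - 1}$) and observing that at every subsequent step the shift $y \mapsto (p-1) + \sqrt{\,\cdot\,}$ replaces $p^2 + 1 + y$ by $p^2 + p + \sqrt{\,\cdot\,}$.

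Finally, for the KNS spectral measure $\mu$ I pass to the limit in the counting measures $\mu_k$ of Eq.~\eqref{mcountingm}. By Theorem~\ref{propspettrop}, an eigenvalue in $f_p^{-n}(2(p-1))$ has multiplicity $(p-1)(p+1)^{k-n-1}$ in $\Gamma^p_k$ for $k \geq n+1$, while the eigenvalues in $\{2p\} \cup \bigcup_i f_p^{-i}(-2)$ each have multiplicity $1$. Dividing by $|V_{\Gamma^p_k}| = (p+1)^k$ and letting $k \to \infty$ produces weight $\tfrac{p-1}{(p+1)^{n+1}}$ at each of the $2^n$ points of $f_p^{-n}(2(p-1))$ (with the $n=0$ case giving weight $\tfrac{p-1}{p+1}$ at $2(p-1)$) and weight $0$ on the Julia part; total mass is $1$ thanks to the geometric sum
\[
\frac{p-1}{p+1} + \sum_{n \geq 1} 2^n \cdot \frac{p-1}{(p+1)^{n+1}} \;=\; \frac{p-1}{p+1} \sum_{n \geq 0} \Bigl(\frac{2}{p+1}\Bigr)^{\!n} \;=\; 1.
\]
The only nonroutine step is the Cantor/zero-measure claim for $J_p$; this is the main obstacle and I handle it by exhibiting the conjugation to $z^2 - (p^2 + p)$ and quoting the standard result, exactly as in the case $p = 3$.
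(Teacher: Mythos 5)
Your proposal is correct and follows essentially the same route as the paper, which itself simply states that the proof "proceeds as in the case $p=3$" and records the key ingredients you verify: the containment of the spectra in $[-2,2p]$, the identity $f_p(\lambda)-2p=(\lambda-2p)(\lambda+2)$, the conjugation of $f_p$ via $z\mapsto z+(p-1)$ to $z^2-p(p+1)$ with $p(p+1)>2$, and the multiplicity count giving the KNS weights $\frac{p-1}{(p+1)^{n+1}}$. Your computations (critical value $-p^2-1$, fixed point $2p$ with $f_p'(2p)=2p+2$, escape of the forward orbit of $2(p-1)$, and the nested-radical recursion from $f_p^{-1}(y)=\{(p-1)\pm\sqrt{p^2+1+y}\}$) all check out.
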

\begin{proof}
The proof proceeds as in the case $p=3$. The spectrum of  $\Gamma^p$ is given by
$$
\overline{\{2p\}\sqcup  \left(\bigcup_{i=0}^{\infty} f_p^{-i}(-2)\right) \sqcup \left(\bigcup_{i=0}^{\infty} f_p^{-i}(2(p-1))\right)},
$$
with $f_p(\lambda)= \lambda^2-2(p-1)\lambda -2p$. A direct computation gives
$$
f_p^{-n}(-2)= \left\{p-1 \pm \sqrt{p^2+p \pm \sqrt{p^2+p \pm\sqrt{ \ldots \pm \sqrt{p^2-1}}}}\right\},   \ n\geq 1
$$
$$
f_p^{-n}(2(p-1))= \left\{p-1\pm \sqrt{p^2+p \pm \sqrt{p^2+p \pm\sqrt{ \ldots \pm \sqrt{p^2+2p-1}}}}\right\},  \ n\geq 1
$$
where the double sign $\pm$ occurs $n$ times. By using Lemma \ref{nested}, it is easy to check that, for each $n$, the spectrum of $\Gamma^p_n$ is contained in the interval $[-2,2p]$. Here, the countable
set of isolated points that accumulates to the Julia set $J$ of $f_p$ is the set $\bigcup_{i=0}^{\infty} (f_p^{-i}(2(p-1)))$. Notice that the Julia set $J$ of $f_p$ has the structure of a Cantor set, since the map $f_p$ is conjugate via the map $F_p(z) = z+(p-1)$ to the quadratic map
$$
z \mapsto z^2-p(p+1),
$$
and $-p(p+1)<-2$ for every $p\geq 2$.
\end{proof}

The Ihara zeta function $\zeta_n(t)$ of the Schreier graph $\Gamma^p_n$ satisfies the equation
$$
\zeta_n(t) =(1-t^2)^{-(p-1)\cdot (p+1)^n}\det(1-tA_n+(2p-1)t^2)^{-1},
$$
where $A_n$ is the adjacency matrix of $\Gamma^p_n$. When passing to the limit, the following integral presentation holds:
$$
\ln \zeta_{\Gamma^p}(t) = -(p-1)\ln(1-t^2)-\int_{-1}^1 \ln(1-2p t\lambda +(2p-1)t^2) d\mu(\lambda), \quad \forall \ t : |t|<\frac{1}{2p-1}
$$
where $\mu$ is the KNS spectral measure and $\lambda$ runs over the normalized spectrum of $\Gamma^p$. In particular we obtain, for each $t$ such that $|t|<\frac{1}{2p-1}$:
\begin{eqnarray*}
\ln \zeta_{\Gamma^p}(t) &=& -(p-1)\ln(1-t^2)- \frac{p-1}{p+1}\ln(1-2(p-1)t+(2p-1)t^2) - \frac{p-1}{p+1}\cdot\\
 &\cdot&\sum_{i=1}^\infty \frac{1}{(p+1)^{i}}\ln \left(1-t\left(p-1\pm\sqrt{p^2+p\pm \sqrt{ \ldots \pm\sqrt{p^2+2p-1}}}\right)+(2p-1)t^2\right).   \nonumber
\end{eqnarray*}

\begin{remark}\rm
In Theorem \ref{thmspettrop} we have supposed $p\geq 2$. In fact, for $p=1$, the star $S_1$ is the path graph $P_2$ on $2$ vertices. The associated star automaton group $\mathcal{G}_{S_2}$ is the group acting on the binary rooted tree $T_2$ generated by the automorphism $a$ having the self-similar representation
$$
a=(a, id)(01),
$$
which is isomorphic to the group $\mathbb{Z}$ and which is classically known as \emph{Adding machine}. For each $n\geq 1$, the $n$-th Schreier graph $\Gamma^1_n$ is a cycle on $2^n$ vertices. Moreover, one has $f_1 =\lambda^2-2$, and the Julia set of this quadratic map is the whole interval $[-2,2]$.\\
\indent The case $p=2$ corresponds to the path graph $P_3$ on $3$ vertices. The associated star automaton group $\mathcal{G}_{S_3}$ is known as \emph{Tangled odometer} and it is the group acting on the rooted ternary tree $T_3$ generated by the automorphisms $a$ and $b$ having the following self-similar representation:
$$
a=(a, id,id)(01) \qquad b=(b,id,id)(02).
$$
This case has been investigated in \cite{articolo0}, where the family of groups associated with the path graph $P_k$, for every $k \geq 2$, has been treated in  detail. Notice that, for $p=2$, our spectral results recover the ones given for this group in \cite[Theorem 6.3]{fromtangled}.
\end{remark}

\section{Schreier graphs of star automaton groups} \label{sectionschreier}

In this section we give a complete classification of the infinite Schreier graphs associated with the star automaton group $\mathcal{G}_{S_p}$. By complete classification we mean the following: we have already remarked that, given an infinite sequence $w=x_1x_2x_3\ldots \in X^{\infty}$, one can define the rooted graph $(\Gamma_w,w)$ as limit of the sequence of rooted graphs $(\Gamma_n,x_1\ldots x_n)$. Now we can forget the root and consider the corresponding (unrooted) infinite Schreier graph. We want to the describe the isomorphism classes of such infinite graphs arising from the action of the star automaton group on $X^{\infty}$. In what follows, given a subgraph $\Theta$ of $\Gamma^p_n$ we denote by $\Theta w$ the set of vertices obtained by appending the word $w\in X^{\ast}\cup X^{\infty}$ to the vertices of $\Theta$. When it is clear from the context, with abuse of notation, we identify a set of vertices of a graph with its induced subgraph. The geodesic distance (or distance for short) between the vertices $u,v$ is denoted by $d(u,v)$.

\begin{definition}\label{defcomp}
\begin{enumerate}
\item Two infinite sequences $\xi=x_1x_2x_3\ldots$ and $\eta=y_1y_2y_3\ldots$ in $X^{\infty}$ are cofinal if there exists $k\in\mathbb{N}$ such that $x_n=y_n$ for any $n\geq k$.
\item Two sequences $\{x_i\}_{i\in \mathbb{N}}$ and $\{y_i\}_{i\in \mathbb{N}}$ of integers are compatible if there exist $l,h\in\mathbb{N}$ such that $x_{l+n}=y_{h+n}$ for any $n\in\mathbb{N}$.
\end{enumerate}
\end{definition}

In other words, two infinite words over $X$ are cofinal if they differ only for a finite prefix. In this case we write $\xi\sim\eta$. The cofinality is an equivalence relation, and we denote by $Cof(\xi)$ the equivalence class of words cofinal to $\xi$. 
Two sequences are compatible if they coincide after removing from them some terms (possibly a different number of them). Notice that also being compatible is an equivalence relation.

\subsection{Finite Schreier graphs} \label{sectionfinite}
From now on we fix a star $S_p$ and we use the same representation of  Fig. \ref{figureS3}. In this case $X=\{0,1,\ldots, p\}$, where 0 is the vertex of degree $p$. We denote by $e_i$ the (directed) edge connecting 0 to $i\in \{1,\ldots, p\}$, so that $e_i=(e_i, id,\ldots, id)(0 i)$ (see Eq. \eqref{gigi}). We will denote by $\Gamma^p_n$ the $n$-th Schreier graph of the group $\mathcal{G}_{S_p}$. \\
Observe that the generator $e_i$ of $\mathcal{G}_{S_p}$ acts like an adding machine on the set $\{0,i\}^{\ast}$. More precisely, when we let it act on a vertex of type $0^tjw$, with $j\neq 0,i$ and $|w|=n-t-1$, we obtain a cycle of length $2^t$ whose vertex set is the whole set $\{0,i\}^tjw$. Let us denote by $C^i_n$ the (maximal) cycle of length $2^n$ labeled by $e_i$ for $i=1,\ldots, p$. Notice that the maximal cycles in $\Gamma^p_n$ are exactly those generated by the $e_i$'s and containing $0^n$.

\begin{example}\rm
In Fig. \ref{alfn}, which represents the Schreier graph $\Gamma_3^3$, the three maximal cycles have length $8$. With respect to Eq. \eqref{generators3}, one has: $e_1=a$, $e_2=b$, $e_3=c$. In particular:
\begin{itemize}
\item the cycle $C^1_3$, containing the adjacent vertices $000$ and $111$, is obtained by letting $a$ act on the vertex $000$;
\item the cycle $C^2_3$, containing the adjacent vertices $000$ and $222$, is obtained by letting $b$ act on the vertex $000$;
\item the cycle $C^3_3$, containing the adjacent vertices $000$ and $333$, is obtained by letting $c$ act on the vertex $000$.
\end{itemize}
\end{example}

\begin{lemma}\label{lemma1}
If $u,v$ are adjacent vertices in $\Gamma^p_n$ then the vertices $uw$ and $vw$ are adjacent in $\Gamma^p_{n+|w|}$ for any $w\in X^{\ast}$ with the only exception, for $i=1,\ldots, p$, given by $\{u,v\}= \{0^n, i^n\}$ and $w$ starting with $0$ or $i$.
\end{lemma}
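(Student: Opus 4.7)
The plan is to exploit directly the self-similar representation $e_i=(e_i,id,\ldots,id)(0i)$ (and the mirror one for $e_i^{-1}$) to compute $e_i(uw)$ and compare it with $vw$. Since two vertices are adjacent in $\Gamma^p_n$ iff one of the generators $e_i^{\pm 1}$ maps one to the other, and the analysis for $e_i$ and $e_i^{-1}$ is symmetric, I will assume $v=e_i(u)$ and treat that case; the other follows in the same way using the self-similar form of $e_i^{-1}$.

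The computation splits according to the first letter $u_1$ of $u$. If $u_1=j\notin\{0,i\}$, then by the self-similar form $e_i$ fixes every word starting with $j$, so $v=u$ and $e_i(uw)=uw=vw$, meaning the edge is a loop that trivially lifts. If $u_1=i$, then $e_i(u)=0\,u_2\cdots u_n$ with trivial restriction on the tail, hence $e_i(uw)=0\,u_2\cdots u_nw=vw$ and the edge persists. If $u_1=0$, then $e_i(u)=i\cdot e_i(u_2\cdots u_n)$ and also $e_i(uw)=i\cdot e_i(u_2\cdots u_nw)$, so the statement for $(u,v)$ at level $n$ reduces to the same statement for $(u_2\cdots u_n, e_i(u_2\cdots u_n))$ at level $n-1$.

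Iterating this reduction, I strip off the initial block of $0$'s of $u$. The recursion terminates as soon as a non-zero letter is met (covered by one of the first two cases) and the edge is preserved; the only way to exhaust $u$ entirely is to have $u=0^n$, hence $v=i^n$. In that residual situation one obtains
\[
e_i(0^nw)=i^n\cdot e_i(w),\qquad vw=i^nw,
\]
so the edge lifts to $\Gamma^p_{n+|w|}$ if and only if $e_i(w)=w$. By the same self-similar rule, $e_i$ fixes $w$ precisely when $w$ is empty or begins with a letter in $\{1,\ldots,p\}\setminus\{i\}$, and moves $w$ exactly when $w$ begins with $0$ or $i$. This produces the stated exception $\{u,v\}=\{0^n,i^n\}$ with $w$ starting by $0$ or $i$, and no other.

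The argument is essentially a careful bookkeeping of the wreath-product action, so I do not anticipate a genuine obstacle; the only subtle point is to verify that the recursion on leading $0$'s is the unique source of a ``carry'' propagating from the first $n$ coordinates into $w$, which is precisely why the exception is localized to the two extremal vertices $0^n$ and $i^n$ of the maximal cycle $C^i_n$.
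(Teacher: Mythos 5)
Your proof is correct and takes essentially the same route as the paper's: both arguments track the restriction (section) of $e_i$ along the prefix $u$, observe that it is either the identity or $e_i$ itself --- the latter occurring exactly when $u=0^n$, $v=i^n$ --- and then examine how that residual $e_i$ acts on the appended word $w$. The paper phrases this as a path in the Moore diagram ending either in the sink or back at the state $e_i$, whereas you unfold the wreath recursion letter by letter; this is the same computation in different notation, and like the paper you leave implicit the verification that in the exceptional case no \emph{other} generator joins $0^nw$ and $i^nw$.
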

\begin{proof}
It is enough to notice that, if $u,v$ are adjacent vertices in $\Gamma^p_n$, then there exists $i$ such that $e_i(u)=v$, i.e., a directed path in the generating automaton labeled by $u$ and $v$ and starting from the state $e_i$. Such a path must either end up in the sink (when $\{u,v\}\neq \{0^n, i^n\}$) or end up in $e_i$ (when $\{u,v\}=\{0^n, i^n\}$). In the first case, we can append to $u$ any $w\in X^{\ast}$ in such a way that $e_i(uw)=vw$. In the second case, if $w$ starts with a letter $j\neq 0,i$, the path labeled by $0^nj$ and $e_i(0^nj)=i^nj$ ends up in the trivial state. Hence also in the case $\{u,v\}=\{0^n, i^n\}$ and $w$ not starting by $0$, $i$ one has that $uw$ and $vw$ are adjacent in $\Gamma^p_{n+|w|}$.
\end{proof}

\begin{remark}
Lemma \ref{lemma1} implies that any cycle $C$ in $\Gamma^p_n$ labeled by $e_i$ and different from $C_n^i$ appears $(p+1)^k$ times in the Schreier graph $\Gamma^p_{n+k}$, with vertices $Cw$ for any $w\in X^k$. 
 The same can be said for cycles of the form $C_n^iv$ where $v$ does not start with $0$ or $i$.
\end{remark}

From Lemma \ref{lemma1} we deduce that, passing from $\Gamma^p_n$ to $\Gamma^p_{n+1}$, each cycle in $\Gamma^p_n$ is preserved just by adding to all its vertices the same letter $k\in X$ except for some of the $p$ maximal cycles $C^i_n$. In fact $C^i_nj$, with $j\neq 0, i$ also corresponds to a subgraph in $\Gamma^p_{n+1}$ that is a copy of $C_n^i$, whereas $C^i_n 0$ and $C^i_n i$ correspond to the two halves of the new maximal cycle $C^i_{n+1}$ of $\Gamma^p_{n+1}$.
\begin{example}\rm
Look  at Fig. \ref{alf1} and Fig. \ref{alf2}, where $p=3$. We have that the maximal cycle $C^1_2$ in $\Gamma^3_2$ produces the cycles $C^1_22$ and $C^1_23$ of length $4$ in $\Gamma^3_3$, which are attached to the vertex $002$ and $003$, respectively. On the other hand, the cycles $C^1_20$ and $C^1_21$ do not appear in $\Gamma^3_3$, but they constitute the two halves of the maximal cycle $C^1_3$ (the edge connecting $001$ and $111$ and the edge connecting $000$ and $110$ do not appear, whereas two new edges connecting the vertices $001$ and $110$, and the vertices $000$ and $111$, appear).
\end{example}

Recall that a \emph{cut-vertex} of a graph is a vertex whose deletion increases the number of connected components of the graph (see, for instance, \cite{bollobas}).
Following \cite[Proposition 4.7]{articolo0} we have that $0u$ is a cut-vertex in $\Gamma^p_n$ for any $u\in X^{n-1}$. In particular $0^n$ is a cut-vertex. Notice that, by removing $0^n$ from $\Gamma^p_n$, we obtain $p$ connected components that we call \emph{petals}. More precisely, the vertex $0^n$ belongs to the maximal cycle $C^i_n$, for each $i=1,\ldots, p$, and the connected component containing this maximal cycle generated by $e_i$ is called the $i$-th petal. One can show that the $i$-th petal consists of the set of vertices ending with a suffix $i0^k$, for $k=0,1,\ldots, n-1$. See, for instance, Fig. \ref{alfn}, representing the Schreier graph $\Gamma_3^3$, where the $1$-st petal is highlighted in the upper part of the graph.\\
\indent All other vertices of $\Gamma^p_n$, those beginning with $i\neq 0$, have $p-1$ loops corresponding to the actions of the generators $e_j$, with $j\neq i$ (we consider loops as cycles of length 1). In the remaining part of the paper, we will consider also such vertices as cut-vertices. In particular, it follows that $\Gamma^p_n$ has a cactus structure. In particular, the following lemma holds.

\begin{lemma}\label{coro}
The vertex $0^n$ is a cut-vertex belonging to $C_n^i$ for any $i=1,\ldots, p$.
Any vertex $v\in \{0,i\}^n\setminus\{0^n\}$ is a cut-vertex belonging to $C_n^i$ and to other $p-1$ cycles labeled by $e_j$, with $j\neq i$, whose size is $2^k$ if $v=0^kiv'$, with $0\leq k\leq n-1$.
\end{lemma}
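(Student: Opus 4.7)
The plan is to split the statement into the two claims it contains and handle each using ingredients already established. The first claim, concerning $0^n$, follows at once from \cite[Proposition 4.7]{articolo0}, already recalled above, together with the very construction of $C^i_n$ as the orbit of $0^n$ under the adding-machine action of $e_i$; this orbit has length $2^n$ and visits every element of $\{0,i\}^n$, so $0^n$ is a cut-vertex belonging to each $C^i_n$.

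For the second claim I fix $v \in \{0,i\}^n \setminus \{0^n\}$ and let $k$ be the position of the first occurrence of $i$ in $v$, so that $v = 0^k i v'$ with $v' \in \{0,i\}^{n-k-1}$ and $0 \leq k \leq n-1$. Membership of $v$ in $C^i_n$ is immediate from what was just said about $C^i_n$ covering $\{0,i\}^n$. The cut-vertex property follows from \cite[Proposition 4.7]{articolo0} when $k \geq 1$ (so that $v$ begins with $0$), and from the convention introduced just before the lemma when $k = 0$ (vertices starting with a nonzero letter are regarded as cut-vertices thanks to their attached loops).

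The substance of the lemma is then the cycle analysis for the remaining $p-1$ generators. For each $j \in \{1,\ldots,p\} \setminus \{i\}$, I will unfold the self-similar representation $e_j = (e_j, id, \ldots, id)(0j)$ into the three rules $e_j(0w) = j\, e_j(w)$, $e_j(jw) = 0w$, and $e_j(xw) = xw$ whenever $x \notin \{0,j\}$. Since $i \notin \{0,j\}$, the letter $i$ sitting at position $k$ of $v$ acts as a barrier: the adding-machine carry propagates through the initial block of $0$'s and $j$'s but is halted by the first $i$, so $e_j$ fixes the suffix $iv'$ and cycles the prefix through all of $\{0,j\}^k$. This yields an orbit of size $2^k$ (a loop when $k=0$), and letting $j$ range over the $p-1$ elements of $\{1,\ldots,p\}\setminus \{i\}$ produces the claimed $p-1$ cycles of common size $2^k$.

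The main technical point I expect is precisely this carry-blocking argument, which is responsible for the factor $2^k$ rather than $2^n$ for the non-dominant generators; once the self-similar recursion is unfolded once, a short induction on the number of applications of $e_j$ finishes the computation, and no further graph-theoretic work is needed beyond quoting \cite[Proposition 4.7]{articolo0} and the cactus convention.
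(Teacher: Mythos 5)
Your proof is correct and follows essentially the same route as the paper, which states this lemma without a separate proof as a consequence of the preceding discussion: the cut-vertex property from \cite[Proposition 4.7]{articolo0} together with the convention on vertices starting with a nonzero letter, and the cycle sizes from the observation that $e_j$ acts as an adding machine on $\{0,j\}^{*}$ so that the orbit of $0^k i v'$ (with $i\neq 0,j$) is exactly $\{0,j\}^k i v'$ of size $2^k$. Your explicit carry-blocking computation is precisely the content of that observation, so nothing is missing.
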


\begin{definition}
Let $\Gamma^p_n$ be $n$-th Schreier graph of the group $\mathcal{G}_{S_p}$. Let $i\in \{1,\ldots, p\}$. The $n$-decoration $\mathcal{D}_n^i$ is the subgraph of $\Gamma^p_n$ obtained by removing from  $\Gamma^p_n$ the $i$-th petal.
\end{definition}

Notice that $\mathcal{D}_n^i$ contains $0^n$ and is connected. Basically, it is the union of the petals different from the $i$-th one together with the vertex $0^n$. Moreover $\mathcal{D}_n^i$ and $\mathcal{D}_n^j$ are isomorphic graphs for any $i,j$. When we are not interested in the specific decoration, but just in its structure, we only write $\mathcal{D}_n$.


From Lemma \ref{lemma1} and Lemma \ref{coro} it follows that $\mathcal{D}_n^iiw$ induces a subgraph in $\Gamma^p_{n+1+|w|}$ which is a copy of $\mathcal{D}_n^i$ via the map $viw\mapsto v$.
In particular $\mathcal{D}_n^ii$ is attached to the vertex $0^{n}i$ of the maximal cycle $C^i_{n+1}$ generated by $e_i$ in $\Gamma^p_{n+1}$. From this it follows that $\mathcal{D}_n^ii$ is a subgraph of $\mathcal{D}_{n+1}^j$ for every $j\neq i$. When we want to highlight the fact that its structure comes from the $n$-th level, we  say that such subgraph of $\Gamma^p_{n+1}$ is an $n$-decoration of $C^i_{n+1}$. By using an analogous argument, we deduce that the subgraphs $\mathcal{D}_n^iii$ and $\mathcal{D}_n^ii0$ inside $\Gamma^p_{n+2}$  are $n$-decorations attached to $C^i_{n+2}$ at the vertices $0^nii$ and $0^ni0$. By iterating this argument we can conclude that, for every $u\in\{0,i\}^m$, the subgraph $\mathcal{D}_n^iiu$ is an $n$-decoration in $\Gamma^p_{n+m+1}$ attached to $C^i_{n+m+1}$ at the vertex $0^niu$. Hence in $\Gamma^p_{n}$ we have attached to $C^i_{n}$:
\begin{itemize}
\item the decoration $\mathcal{D}_n^i$ at the vertex $0^n$;
\item one $(n-1)$-decoration given by $\mathcal{D}_{n-1}^ii$ at the vertex $0^{n-1}i$;
\item $2^{k}$ copies of an $(n-k-1)$-decoration given by $\mathcal{D}_{n-k-1}^iiu$ at the vertex $0^{n-k-1}iu$, with $k=1,\ldots, n-1$ for every $u\in \{0,i\}^k$.
\end{itemize}
Here, by $0$-decoration we mean a vertex with $p-1$ loops attached. In Fig. \ref{alfn}, representing the Schreier graph $\Gamma_3^3$, the $2$-decoration $\mathcal{D}_2^22$ and one $1$-decoration given by $\mathcal{D}^2_123$ are depicted, attached to the vertex $002$ and $023$, respectively.

\begin{remark}\label{remo}
Notice that the vertex $0^kiv'$ of $C_n^i$ has attached the $k$-decoration $\mathcal{D}_k^iiv'$.
\end{remark}

\begin{proposition}\label{prop1}
Let $\phi_n^i$ be the nontrivial automorphism of $C_n^i$ fixing $0^n$. Then for any $v\in C_n^i$, the vertices $v$ and $\phi_n^i(v)$ have attached decorations that are isomorphic. In particular $\phi_n^i(v)$ is the only vertex of $C_n^i$ satisfying $d(0^n,v)=d(0^n,\phi_n^i(v))$.
\end{proposition}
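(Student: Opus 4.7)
The approach will be to identify $C_n^i$ with the integer cycle $\mathbb{Z}/2^n\mathbb{Z}$ via the adding-machine action of $e_i$, and then reduce both assertions to an elementary property of the $2$-adic valuation. Explicitly, I would associate to each $v = v_1 v_2 \cdots v_n \in \{0,i\}^n$ the integer $m(v) \in \{0, \ldots, 2^n - 1\}$ obtained by encoding $0 \mapsto 0$, $i \mapsto 1$ and reading $v$ as a binary expansion with $v_1$ as the least significant bit. Since $e_i$ acts on $\{0,i\}^n$ as the adding machine, this bijection intertwines $e_i$ with $m \mapsto m+1 \pmod{2^n}$, and $0^n$ corresponds to $0$.

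Under this identification, $d(0^n, v) = \min\{m(v), 2^n - m(v)\}$ and the unique nontrivial involution of $C_n^i$ fixing $0^n$ is the reflection $m \mapsto -m \pmod{2^n}$, so that $\phi_n^i(v)$ is the vertex with $m(\phi_n^i(v)) \equiv -m(v) \pmod{2^n}$. This immediately yields the distance characterization: on a cycle of length $2^n$, for every non-fixed vertex $v$ there is a unique partner at distance $d(0^n, v)$ from $0^n$, namely $\phi_n^i(v)$, while the two fixed points $0^n$ and $0^{n-1}i$ (the antipode, corresponding to $m = 2^{n-1}$) are the only vertices at their respective distances from $0^n$.

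For the decoration isomorphism, I would invoke Remark \ref{remo}: if $v = 0^k i v'$ with $0 \leq k \leq n-1$, the attached decoration is a copy of $\mathcal{D}_k$, whose isomorphism type depends only on $k$. Since $k$ equals both the number of leading zeros of $v$ and the $2$-adic valuation $\nu_2(m(v))$, it suffices to prove the elementary statement that $\nu_2(m) = \nu_2(2^n - m)$ for every $1 \leq m \leq 2^n - 1$. This is immediate: if $\nu_2(m) = k$, then $k \leq n-1$, hence $2^k$ divides both $m$ and $2^n$, while $2^{k+1}$ divides $2^n$ without dividing $m$, giving $\nu_2(2^n - m) = k$. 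The two fixed cases $v \in \{0^n, 0^{n-1}i\}$ are trivial because then $\phi_n^i(v) = v$.

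The main care is simply in setting up the integer correspondence consistently and in singling out the two fixed points of $\phi_n^i$; no deeper obstacle is expected, since the self-similar origin of the decorations encoded by Remark \ref{remo} is exactly what produces the required $2$-adic symmetry of the cycle.
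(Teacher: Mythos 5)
Your proof is correct and follows essentially the same route as the paper: identify $C_n^i$ with $\mathbb{Z}/2^n\mathbb{Z}$ via the adding-machine encoding, observe that $\phi_n^i$ is the reflection $m\mapsto -m \pmod{2^n}$, and conclude via Remark \ref{remo} that the attached decorations depend only on the length $k$ of the leading block of $0$'s, which is preserved by the reflection. The only cosmetic difference is that you phrase the preservation of $k$ as the identity $\nu_2(m)=\nu_2(2^n-m)$, whereas the paper computes explicitly that $\phi_n^i(0^kiv)=0^kiv'$ with $v'$ obtained by swapping $0$ and $i$; these are the same fact.
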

\begin{proof}
The vertices of $C_n^i$ can be identified with the numbers $0, \ldots, 2^n-1$ by using the binary expansion (from the left to the right) of such numbers by identifying $i$ with $1$. Notice that the automorphism $\phi_n^i$ is a reflection around the axis connecting $0^n$ and $0^{n-1}i$ and it acts in such a way that $v+\phi_n^i(v)\equiv 0$ mod $2^{n}$. In particular, if $u=0^kiv$, with $k\in\{0,1,\ldots, n-1\}$, then $\phi_n^i(u)=0^kiv'$, where $v'$ is the word obtained from $v$ by switching any $0$ to $i$ and viceversa. By Remark \ref{remo} such vertices have attached the same $(n-|v|-1)$-decoration. The claim follows.
\end{proof}
Any vertex of $\Gamma^p_n$ is a cut-vertex belonging to $p$ different cycles. If the vertex $u$ belongs to the $i$-th petal of $\Gamma^p_n$, then there is a unique path of cycles, connecting $u$ to $C_n^i$. The first cycle in this path is the one containing $u$ in the direction of $C_n^i$.  Notice that the path of cycles is not defined for $0^n$. From now on, we do not consider this vertex.

We denote by $\mathcal{P}_n^u=\{P_{1}^n(u), \ldots, P_{m_u}^n(u)\}$ the \emph{path of cycles} associated with $u\in \Gamma^p_n$. Notice that $ P_{m_u}^n(u)$ is $C_n^i$ if $u$ belongs to the $i$-th petal. Moreover we denote by $\mathcal{L}_n^u=\{L_{1}^n(u), \ldots, L_{m_u}^n(u)\}$ the set of the lengths of the cycles in $\mathcal{P}_n^u$, i.e., $ L_{i}^n(u)$ is the length of the cycle  $P_{i}^n(u)$. In what follows, with a small abuse of notation, we identify the graph $P_{i}^n(u)$ with its vertex set.

\begin{example}
Looking at the graph $\Gamma^3_3$ of Fig. \ref{alfn}, we have $\mathcal{L}_3^{121}=\{2,4,8\}$; $\mathcal{L}_3^{201}=\{4,8\}$; $\mathcal{L}_3^{130}=\{2,8\}$.
\begin{figure}[h]
\begin{center}    \footnotesize
\psfrag{000}{$000$}\psfrag{001}{$001$}\psfrag{002}{$002$}\psfrag{003}{$003$}
\psfrag{010}{$010$}\psfrag{011}{$011$}\psfrag{012}{$012$}\psfrag{013}{$013$}
\psfrag{020}{$020$}\psfrag{021}{$021$}\psfrag{022}{$022$}\psfrag{023}{$023$}
\psfrag{030}{$030$}\psfrag{031}{$031$}\psfrag{032}{$032$}\psfrag{033}{$033$}

\psfrag{100}{$100$}\psfrag{101}{$101$}\psfrag{102}{$102$}\psfrag{103}{$103$}
\psfrag{110}{$110$}\psfrag{111}{$111$}\psfrag{112}{$112$}\psfrag{113}{$113$}
\psfrag{120}{$120$}\psfrag{121}{$121$}\psfrag{122}{$122$}\psfrag{123}{$123$}
\psfrag{130}{$130$}\psfrag{131}{$131$}\psfrag{132}{$132$}\psfrag{133}{$133$}

\psfrag{200}{$200$}\psfrag{201}{$201$}\psfrag{202}{$202$}\psfrag{203}{$203$}
\psfrag{210}{$210$}\psfrag{211}{$211$}\psfrag{212}{$212$}\psfrag{213}{$213$}
\psfrag{220}{$220$}\psfrag{221}{$221$}\psfrag{222}{$222$}\psfrag{223}{$223$}
\psfrag{230}{$230$}\psfrag{231}{$231$}\psfrag{232}{$232$}\psfrag{233}{$233$}

\psfrag{300}{$300$}\psfrag{301}{$301$}\psfrag{302}{$302$}\psfrag{303}{$303$}
\psfrag{310}{$310$}\psfrag{311}{$311$}\psfrag{312}{$312$}\psfrag{313}{$313$}
\psfrag{320}{$320$}\psfrag{321}{$321$}\psfrag{322}{$322$}\psfrag{323}{$323$}
\psfrag{330}{$330$}\psfrag{331}{$331$}\psfrag{332}{$332$}\psfrag{333}{$333$}
\psfrag{P}{$1$-st petal}

\psfrag{A}{$C^1_3$}\psfrag{B}{$C^3_3$}\psfrag{C}{$C^2_3$}
\psfrag{D}{$\mathcal{D}_2^22$}\psfrag{d}{$\mathcal{D}^2_123$}
 \includegraphics[width=0.9\textwidth]{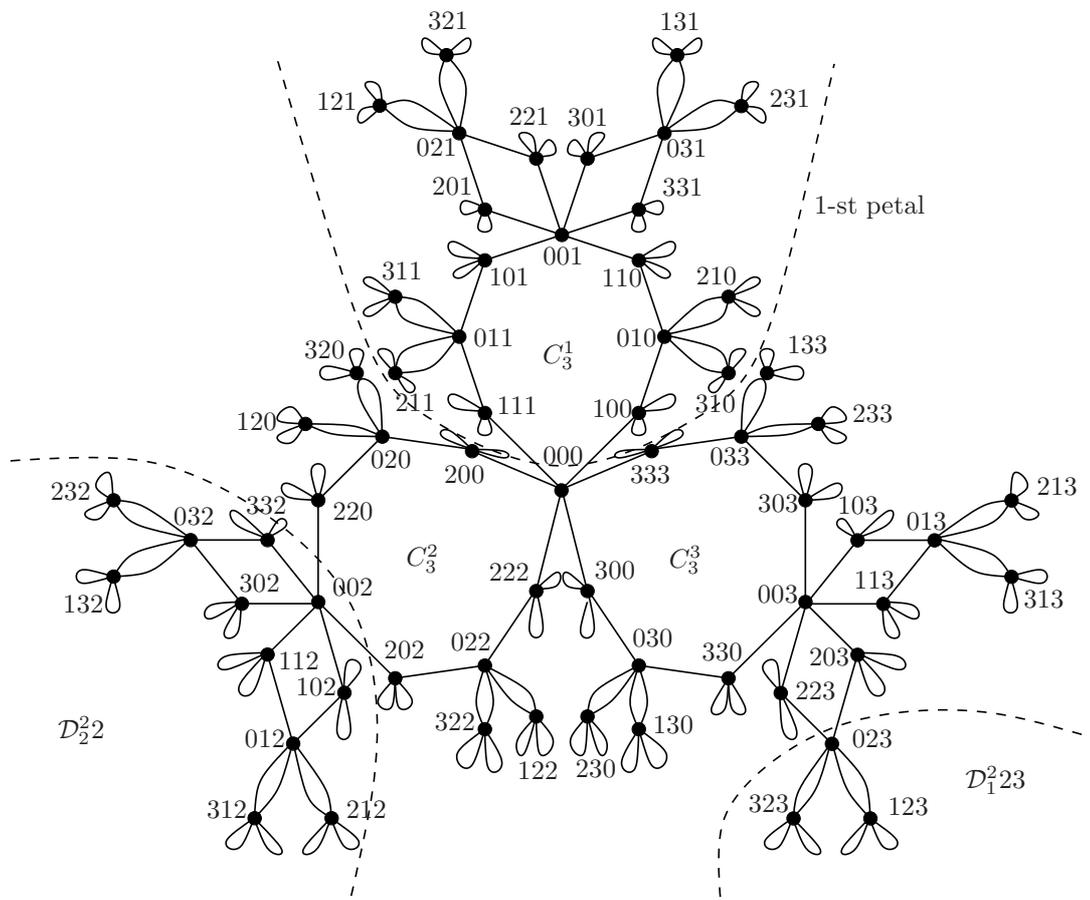}
\end{center}\caption{The Schreier graph $\Gamma^3_3$.} \label{alfn}
\end{figure}
\end{example}

Given a word $u\in X^n\setminus \{0^{n}\}$ we can write $u=0^ka_1u_1a_2u_2\ldots a_tu_t$, where $0\leq k\leq n-1$,  $a_i\in \{1,\ldots, p\}$, $a_i\neq a_{i+1}$  and $u_i\in \{0,a_i\}^{\ast}$. We call this writing the \emph{decomposition} of $u$.

\begin{lemma}\label{lemmaA}
Let $u\in X^n\setminus \{0^n\}$ and let $u=0^ka_1u_1a_2u_2\ldots a_tu_t$ be its decomposition. Then:
\begin{enumerate}
\item $m_u=t$
\item
$$
P_{1}^n(u)=C^{a_1}_{|u_1|+k+1}a_2u_2\ldots a_tu_t,  \quad P_{2}^n(u)=C^{a_2}_{|u_1|+|u_2|+k+2}a_3u_3\ldots a_tu_t , \quad \ldots
$$
$$
P_{i}^n(u)=C^{a_i}_{\sum_{\ell=1}^i|u_{\ell}|+k+i}a_{i+1}u_{i+1}\ldots a_tu_t,\quad \ldots, \quad  P_{m_u}^n(u)=C_n^{a_t}.
$$
\item
$$
\mathcal{L}_n^u=\{2^{|u_1|+k+1}, 2^{|u_1|+|u_2|+k+2}, \ldots,2^{\sum_{\ell=1}^i|u_{\ell}|+k+i} ,\ldots, 2^n\}.
$$
\end{enumerate}
\end{lemma}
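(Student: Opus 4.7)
The plan is to proceed by induction on $t$, the number of blocks in the decomposition $u=0^k a_1 u_1\ldots a_t u_t$. The base case $t=1$ is immediate: if $u=0^k a_1 u_1$ with $u_1\in\{0,a_1\}^\ast$ and $k+1+|u_1|=n$, then $u\in\{0,a_1\}^n$ is itself a vertex of the maximal cycle $C_n^{a_1}=C_n^{a_t}$, so the path reduces to the single cycle $C_n^{a_1}$ and the stated formulas hold with $m_u=1$ and $L_1^n(u)=2^n=2^{|u_1|+k+1}$.

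For the inductive step with $t\geq 2$, I would first pin down the candidate for the first cycle $P_1^n(u)$, namely
\[
S=\{0,a_1\}^{k+1+|u_1|}\cdot a_2 u_2\ldots a_t u_t.
\]
Because $a_2\notin\{0,a_1\}$, the adding-machine behavior of $e_{a_1}$ on the prefix $0^k a_1 u_1\in\{0,a_1\}^\ast$ of $u$ (together with Lemma \ref{lemma1}) shows that $S$ is a cycle in $\Gamma_n^p$ labeled by $e_{a_1}$ of length $2^{k+1+|u_1|}$ containing $u$. I would then distinguish the vertex $v_1=0^{k+1+|u_1|}a_2 u_2\ldots a_t u_t\in S$ and observe that it admits a decomposition with exactly $t-1$ blocks, with $k'=k+1+|u_1|$, $a_i'=a_{i+1}$, $u_i'=u_{i+1}$; in particular $v_1$ still lies in the $a_t$-th petal. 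Applying the inductive hypothesis to $v_1$ yields a path $(Q_1,\ldots,Q_{t-1})$ ending at $C_n^{a_t}$, and a direct reindexing of the resulting formulas matches each $Q_i$ with the $P_{i+1}^n(u)$ claimed in the lemma. I would then verify that (i) $v_1\in S\cap Q_1$, (ii) $S\neq Q_1$ since their lengths differ, and (iii) $u\notin Q_1$ because the prefix $0^k a_1 u_1 a_2 u_2$ of $u$ contains $a_1\notin\{0,a_2\}$; hence $(S,Q_1,\ldots,Q_{t-1})$ is a simple valid path of cycles from $u$ to $C_n^{a_t}$. The cactus structure of $\Gamma_n^p$ established through Lemma \ref{coro} guarantees uniqueness of such a path, so it must be $\mathcal{P}_n^u$; in particular $m_u=t$, and the length formula $L_i^n(u)=2^{\sum_{\ell=1}^i|u_\ell|+k+i}$ reads off the exponent in the definition of $P_i^n(u)$.

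The main obstacle I anticipate is justifying that $S$, and not one of the other $p-1$ cycles through $u$ labeled by some $e_j$ with $j\neq a_1$, is really the first cycle in the path from $u$ toward $C_n^{a_t}$. To settle this, I would explicitly describe those alternative cycles: for each $j\neq a_1$, the maximal prefix of $u$ in $\{0,j\}^\ast$ is $0^k$ (reducing to a loop at $u$ when $k=0$), so the cycle of $e_j$ through $u$ has the form $\{0,j\}^k\cdot a_1 u_1\ldots a_t u_t$, and every vertex on such a cycle has at most $k$ leading zeros. By contrast, $v_1\in S$ has exactly $k+1+|u_1|$ leading zeros. Since an increase in the number of leading zeros corresponds to progress toward the central vertex $0^n$, which lies on every maximal cycle $C_n^i$ and in particular on $C_n^{a_t}$, this singles out $S$ as the unique branch of the cactus tree emanating from $u$ that progresses toward $C_n^{a_t}$, closing the inductive argument.
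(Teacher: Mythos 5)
Your proof follows essentially the same route as the paper's: induction on $t$, with the base case $t=1$ placing $u$ on the maximal cycle $C_n^{a_1}$, and the inductive step passing to the vertex $v_1=0^{k+1+|u_1|}a_2u_2\ldots a_tu_t$, whose decomposition has $t-1$ blocks, and then invoking the uniqueness of the path of cycles. The only difference is that you spell out details the paper leaves implicit (in particular, why the $e_{a_1}$-cycle, and not one of the shorter $e_j$-cycles through $u$, is the first cycle of the path), and these verifications are correct.
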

\begin{proof}
We proceed by induction on the value of $t$ in the decomposition of $u$. \\
If $t=1$, then $u=0^ka_1u_1$ and such vertex belongs to $C_{|u|}^{a_1}$ and the claim is true.\\ Let $t=\ell+1$, so that $u=0^ka_1u_1a_2u_2\ldots a_{\ell}u_{\ell}a_{\ell+1}u_{\ell+1}$. Notice that the vertices $u$ and $v=0^{k+1+|u_1|}a_2u_2\ldots a_{\ell}u_{\ell}a_{\ell+1}u_{\ell+1}$ belong to the same cycle $C_{k+1+|u_1|}^{a_1}a_2u_2\ldots a_{\ell}u_{\ell}a_{\ell+1}u_{\ell+1}$ whose length is $2^{k+1+|u_1|}$. Notice that the index $t$ of the decomposition of $v$ equals $\ell$. By using the inductive hypothesis and the uniqueness of the path of cycles one can show the asserts.
\end{proof}


\begin{proposition}\label{propositionA}
Let $w=x_1x_2\ldots \in X^{\infty}\setminus \{0^{\infty}\}$ and consider the sequence of sets $ \{\mathcal{P}_n^{x_1\ldots x_n}\}_{n\geq 1}$. Then $|\mathcal{P}_n^{x_1\ldots x_n}|\leq |\mathcal{P}_{n+1}^{x_1\ldots x_nx_{n+1}}|$. Moreover $\lim _n |\mathcal{P}_n^{x_1\ldots x_n}|<\infty$ if and only if $w$ is cofinal to a word in $\{0,i\}^{\infty}$, for some $i\in X$.
\end{proposition}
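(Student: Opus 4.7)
The plan is to read off the cardinality $|\mathcal{P}_n^{u_n}|$ directly from the block decomposition of $u_n := x_1\ldots x_n$ given by Lemma \ref{lemmaA}, and then track how this decomposition changes when a new letter is appended. Since $w\neq 0^\infty$, set $n_0=\min\{n\geq 1:x_n\neq 0\}$; for $n<n_0$ the set $\mathcal{P}_n^{u_n}$ is undefined, so we restrict to $n\geq n_0$, which does not affect the asymptotic behaviour. For every $n\geq n_0$ the word $u_n$ admits a unique decomposition
\[
u_n = 0^{n_0-1} a_1 u_1 \ldots a_{t(n)} u_{t(n)},\qquad a_j\in\{1,\ldots,p\},\ a_j\neq a_{j+1},\ u_j\in\{0,a_j\}^\ast,
\]
and by Lemma \ref{lemmaA} we have $|\mathcal{P}_n^{u_n}|=t(n)$. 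The whole statement then reduces to an analysis of the integer sequence $\{t(n)\}_{n\geq n_0}$.

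For the monotonicity I would distinguish cases according to the value of $x_{n+1}$. If $x_{n+1}\in\{0,a_{t(n)}\}$, then appending $x_{n+1}$ simply extends $u_{t(n)}$ by one letter which still lies in $\{0,a_{t(n)}\}^\ast$, so $t(n+1)=t(n)$. Otherwise $x_{n+1}\in\{1,\ldots,p\}\setminus\{a_{t(n)}\}$ starts a brand new block $a_{t(n)+1} u_{t(n)+1} = x_{n+1}\emptyset$, so $t(n+1)=t(n)+1$. In both cases $t(n+1)\geq t(n)$, which proves the first assertion.

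For the cofinality characterization, since $\{t(n)\}_{n\geq n_0}$ is non-decreasing with values in $\mathbb{N}$, its limit is finite if and only if it is eventually constant. By the case analysis above, $t$ becomes constant from some index $N\geq n_0$ onwards if and only if $x_{n+1}\in\{0,a_{t(N)}\}$ for every $n\geq N$. Setting $i:=a_{t(N)}$, this is exactly the statement that the suffix $x_{N+1} x_{N+2}\ldots$ lies in $\{0,i\}^\infty$, i.e., $w$ is cofinal to a word in $\{0,i\}^\infty$. Conversely, if $w$ is cofinal to some element of $\{0,i\}^\infty$, choose $N$ so large that $x_n\in\{0,i\}$ for every $n>N$; after $N$ the value $t(n)$ can increase at most once (precisely when $a_{t(N)}\neq i$ and the first $i$ in the tail appears) and then stays constant forever, giving $\lim_n t(n)<\infty$.

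No serious obstacle is expected: the proposition is essentially an immediate reformulation of Lemma \ref{lemmaA} at the level of infinite words, the key observation being that appending a single letter to a word changes the number of blocks in its decomposition by either $0$ or $1$, and that the change is forced by whether the new letter coincides with $0$ or with the current trailing non-zero letter.
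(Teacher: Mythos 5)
Your argument is correct and follows essentially the same route as the paper: both reduce the statement to the block decomposition of Lemma \ref{lemmaA}, identify $|\mathcal{P}_n^{x_1\ldots x_n}|$ with the number $t$ of blocks, and observe that appending a letter leaves $t$ unchanged precisely when the new letter lies in $\{0,a_t\}$ and increments it by one otherwise. Your write-up is in fact slightly more careful than the paper's (explicit handling of the all-zero prefixes and of the converse direction), but there is no substantive difference in method.
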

\begin{proof}
Suppose that $w_n=x_1\ldots x_n$ ends with a suffix $i0^k$, for some $k\geq 0$ and $i\in\{1,\ldots, p\}$, so that it belongs to the $i$-th petal. Then, by using Lemma \ref{lemmaA}, passing from $\Gamma^p_n$ to $\Gamma^p_{n+1}$ we have two possible situations:
\begin{enumerate}
\item if $x_{n+1}\in\{0,i\}$, the index $t$ of the decomposition of $w_n$ and $w_{n+1}$ is the same.
\item  if $x_{n+1}\neq 0,i$ the index $t$ of the decomposition of $w_{n+1}$ increases by one with respect to that of $w_n$.
\end{enumerate}
The length of the path of cycles remains the same if and only if we add, after some prefix of $w$  ending with a suffix $i0^k$, only letters from the alphabet $\{0,i\}$, for some $i$.
In particular it follows that, in the second case, we have a nested path of cycles associated with the prefixes $w_n$ of $w$.
\end{proof}
Notice that the analogous statement clearly holds by substituting $ \mathcal{P}$ by $\mathcal{L}$.

\begin{remark}\label{remarkA}
Lemma \ref{lemmaA} and Proposition \ref{propositionA} imply that $\mathcal{P}_n^{x_1\ldots x_n}$ and $\mathcal{P}_{n+1}^{x_1\ldots x_nx_{n+1}}$ are such that either they have the same size (and in this case they differ just for the last cycle that has length $2^n$ in one case and $2^{n+1}$ in the other case) or the path $\mathcal{P}_{n+1}^{x_1\ldots x_nx_{n+1}}$ contains one cycle more than $\mathcal{P}_n^{x_1\ldots x_n}$ that is its subset.
In particular, the length of the path of cycles associated with $u=0^ka_1u_1a_2u_2\ldots a_tu_t$ is $t$. Any time we read a new letter $a_i$ the sequence increases by one.
\end{remark}
Remark \ref{remarkA} implies that one can define the path of cycles associated with $w=x_1x_2\ldots \in X^{\infty}$ as the limit of $\mathcal{P}_n^{x_1\ldots x_n}$. The same can be said for the sequence of the lengths. We denote them by $\mathcal{P}^w=\{P_1^w,P_2^w,\ldots \}$ and $\mathcal{L}^w=\{L_1^w,L_2^w,\ldots \}$, respectively. Moreover, we can also define the decomposition of an infinite word $u=0^ka_1u_1\ldots \in X^{\infty}$.

\subsection{From finite to infinite Schreier graphs}\label{sectioninfinite}
We start this section with the following result that is standard in this setting.

\begin{lemma}\label{lemmaCOF}
Let $w\in X^{\infty}$. If $w\in  Cof(0^{\infty})\cup\ldots\cup Cof(p^{\infty})$ then the orbit of $w$ under $\mathcal{G}_{S_p}$ coincides with $Cof(0^{\infty})\cup\ldots\cup Cof(p^{\infty})$. Otherwise the orbit of $w$ under $\mathcal{G}_{S_p}$ coincides with $Cof(w)$.
\end{lemma}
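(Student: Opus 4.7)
The proof will split into a $\subseteq$ and a $\supseteq$ inclusion in each of the two cases, combining a direct analysis of how $e_i^{\pm 1}$ acts on an infinite word through~\eqref{gigi} with spherical transitivity on finite levels and the fractal property of $\mathcal{G}_{S_p}$. First I would establish the following dichotomy by tracing the Moore diagram: for every $w\in X^\infty$ and every $i\in\{1,\ldots,p\}$, the generator $e_i$ modifies only a finite prefix of $w$ unless $w=0^\infty$, in which case $e_i(0^\infty)=i^\infty$; symmetrically, $e_i^{-1}$ alters only a finite prefix unless $w=i^\infty$, in which case $e_i^{-1}(i^\infty)=0^\infty$. The reason is that, starting from the state $e_i$, the state persists exactly while $0$'s are being read; the first non-zero letter collapses it to $id$ and fixes the rest of $w$.

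This dichotomy immediately gives the $\subseteq$-inclusion by induction on the length of a generator word. If $w\notin\bigcup_{j=0}^p Cof(j^\infty)$, then no element of $Cof(w)$ is one of the constant rays $0^\infty,\ldots,p^\infty$, so each generator preserves $Cof(w)$ and the orbit of $w$ lies in $Cof(w)$. If $w\in\bigcup_j Cof(j^\infty)$, the same argument shows that this union is invariant, since the only jumps between cofinality classes happen along $0^\infty\leftrightarrow i^\infty$, whose endpoints both lie in the union.

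For the reverse inclusion I would combine spherical transitivity with the fractal property. Given $v\sim w$ with common tail $t$ from position $N+1$ onwards, connectedness of $\Gamma^p_N$ produces $g_0\in\mathcal{G}_{S_p}$ with $g_0(w_1\ldots w_N)=v_1\ldots v_N$ at level $N$; applied to the full infinite word this gives $g_0(w)=v_1\ldots v_N\cdot g'(t)$, where $g'=g_0|_{w_1\ldots w_N}$ is the restriction at that vertex. The iterated fractal property, obtained by nesting the first-level surjection $\psi:\mathrm{Stab}_X\twoheadrightarrow\mathcal{G}_{S_p}$ through the levels, then furnishes $h\in\mathrm{Stab}(v_1\ldots v_N)$ whose restriction at $v_1\ldots v_N$ equals $(g')^{-1}$; hence $hg_0(w)=v$. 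This proves $Cof(w)\subseteq\mathrm{Orbit}(w)$. In the second case, the identity $e_i(0^\infty)=i^\infty$ additionally places all the constant rays $0^\infty,\ldots,p^\infty$ in a single orbit, and applying the same fractal-plus-transitivity argument starting from each $i^\infty$ then exhausts $\bigcup_j Cof(j^\infty)$.

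The main technical step is the iterated fractal property: one must promote $\psi$ to the assertion that, for any $v\in X^N$ and any $h\in\mathcal{G}_{S_p}$, some element of $\mathrm{Stab}(v)$ restricts to $h$ on the subtree rooted at $v$. This follows by a straightforward induction on $N$ that nests $\psi$ one level at a time. Everything else reduces to routine Moore-diagram checks for the exceptional rays $0^\infty$ and $i^\infty$.
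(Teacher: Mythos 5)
Your proposal is correct and follows essentially the same route as the paper: the forward inclusion via the observation that the only infinite non-sink paths in the automaton are those labeled $0^\infty|i^\infty$ at $e_i$ (so generators move infinitely many letters only on the constant rays), and the reverse inclusion by combining spherical transitivity on a finite level with the (iterated) fractal property to correct the tail. Your explicit remark that the fractal property must be nested through $N$ levels is a welcome bit of extra care, but it is the same argument the paper invokes when it writes ``by fractalness, there exists $g'$ such that $g'(u_nv')=u_nv$''.
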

\begin{proof}
Notice that the only infinite paths in the generating automaton, that do not fall into the sink, are those labeled by $0^{\infty}|i^{\infty}$ starting at $e_i$, with $i\in\{1,\ldots, p\}$ (in particular, all the words $i^{\infty}$'s are in the orbit of $0^{\infty}$). This implies that the action of $\mathcal{G}_{S_p}$ changes infinitely many letters only on words of type $w=i^{\infty}$, with $i\in\{0,1,\ldots, p\}$. Therefore, if $w\in  Cof(0^{\infty})\cup\ldots\cup Cof(p^{\infty})$, its orbit is contained in $Cof(0^{\infty})\cup\ldots\cup Cof(p^{\infty})$; similarly, if $w\not\in Cof(0^{\infty})\cup\ldots\cup Cof(p^{\infty})$, one has that its orbit is contained in $Cof(w)$. In order to show the opposite inclusions, we use that $\mathcal{G}_{S_p}$ is fractal and spherically transitive (see \cite{articolo0}). In particular, given $u$ and $w$ cofinal, there exist prefixes $u_n, w_n$ of length $n$ such that $u=u_nv$ and $w=w_nv$. By transitivity, there exists $g\in \mathcal{G}_{S_p}$ such that $g(w_n)=u_n$. Let $g(w)=u_nv'$. By fractalness, there exists $g'\in \mathcal{G}_{S_p}$ such that $g'(u_nv')=u_nv$. Then $g'g(w)=u$, so that $u$ belongs to the orbit of $w$.
\end{proof}

The particular structure of the Schreier graphs allows to keep trace of the dynamic of an infinite word $u$.

\begin{lemma}\label{remarkB}
Let $u\in X^{\infty}$ with decomposition $u=0^ka_1u_1\ldots \in X^{\infty}$, then $P_{i}^u\cap P_{i+1}^u=\{0^{k+i+\sum_{j=1}^i|u_j|}a_{i+1}u_{i+1}\ldots \}$.
\end{lemma}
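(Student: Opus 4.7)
The plan is to identify the cycles $P_i^u$ and $P_{i+1}^u$ via Lemma \ref{lemmaA} (extended to infinite words as explained in Remark \ref{remarkA} and the paragraph following Proposition \ref{propositionA}), express any vertex in the intersection in two parallel forms, and then observe that a certain prefix is forced to consist entirely of zeros because $a_i\neq a_{i+1}$.

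More concretely, writing $N_i=k+i+\sum_{j=1}^{i}|u_j|$, the infinite analogue of Lemma \ref{lemmaA} tells us that
$$P_i^u = C^{a_i}_{N_i}\, a_{i+1}u_{i+1}a_{i+2}u_{i+2}\ldots, \qquad P_{i+1}^u = C^{a_{i+1}}_{N_{i+1}}\, a_{i+2}u_{i+2}a_{i+3}u_{i+3}\ldots,$$
so the vertex set of $P_i^u$ is $\{0,a_i\}^{N_i}\,a_{i+1}u_{i+1}a_{i+2}u_{i+2}\ldots$ and the vertex set of $P_{i+1}^u$ is $\{0,a_{i+1}\}^{N_{i+1}}\,a_{i+2}u_{i+2}\ldots$. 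I would then take an arbitrary $x\in P_i^u\cap P_{i+1}^u$ and write it in both forms, $x=v\cdot a_{i+1}u_{i+1}a_{i+2}u_{i+2}\ldots$ with $v\in\{0,a_i\}^{N_i}$, and $x=w\cdot a_{i+2}u_{i+2}\ldots$ with $w\in\{0,a_{i+1}\}^{N_{i+1}}$. The second form automatically forces the positions corresponding to the factor $a_{i+1}u_{i+1}$ to lie in $\{0,a_{i+1}\}^{|u_{i+1}|+1}$, which is consistent since $u_{i+1}\in\{0,a_{i+1}\}^{\ast}$.

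The decisive step is to compare the first $N_i$ letters: from the first form they belong to $\{0,a_i\}$, while from the second form they are the first $N_i$ letters of $w$ and hence belong to $\{0,a_{i+1}\}$. Since $a_i\neq a_{i+1}$ (this condition is part of the definition of decomposition), the intersection $\{0,a_i\}\cap\{0,a_{i+1}\}=\{0\}$, so $v=0^{N_i}$. This yields the unique vertex
$$x = 0^{N_i}\, a_{i+1}u_{i+1}a_{i+2}u_{i+2}\ldots = 0^{k+i+\sum_{j=1}^{i}|u_j|}\, a_{i+1}u_{i+1}\ldots,$$
and one checks immediately that this vertex indeed lies in both $P_i^u$ and $P_{i+1}^u$, establishing both inclusions.

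There is no genuine obstacle here; the statement is essentially a bookkeeping consequence of the explicit description of cycles in Lemma \ref{lemmaA}. The only subtle point is the passage from finite to infinite prefixes: one must note that for any finite prefix long enough to contain the factor $a_{i+1}u_{i+1}a_{i+2}$ (so that the cycles $P_i^n$ and $P_{i+1}^n$ are both already stable as subsets of $\Gamma_n^p$ up to appending the common suffix), the computation above applied to $\Gamma_n^p$ gives exactly the claimed intersection, and taking the direct limit preserves this intersection since the sequences of cycles $\{P_i^n\}_n$ are nested according to Proposition \ref{propositionA}.
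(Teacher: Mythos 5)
Your proof is correct and follows essentially the same route as the paper's: both arguments reduce to a sufficiently deep finite level $\Gamma^p_n$ and then exploit the explicit description of the cycles from Lemma \ref{lemmaA}. You simply carry out the intersection computation explicitly (using that the vertex sets are $\{0,a_i\}^{N_i}a_{i+1}u_{i+1}\ldots$ and $\{0,a_{i+1}\}^{N_{i+1}}a_{i+2}u_{i+2}\ldots$, and that $\{0,a_i\}\cap\{0,a_{i+1}\}=\{0\}$), where the paper phrases the same fact in terms of the decoration $\mathcal{D}^{a_i}_n a_{i+1}$ attaching to the next cycle at the single vertex $0^{k+i+\sum_{j=1}^i|u_j|}a_{i+1}\ldots$.
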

\begin{proof}
Take $n$ such that $n>k+i+1+\sum_{j=1}^{i+1}|u_j|$ and let $u_n$ be the prefix of $u$ of length $n$, such that $u=u_nu'$. Notice that if $P_{i}^{u_n}\cap P_{i+1}^{u_n}=\{w\}$ then $P_{i}^u\cap P_{i+1}^u=wu'$. This means that we can study the intersection of cycles in $\Gamma^p_n$ for $n$ large enough. A new cycle appears whenever we read a letter $a_{i+1}\neq a_i$. In this case $u_n$ becomes an element of $\mathcal{D}^{a_{i}}_{n}a_{i+1}$. In particular, the last two cycles are connected in $0^{k+i+\sum_{j=1}^i|u_j|}a_{i+1}$.
\end{proof}
Using the previous results we are ready to prove the following classification theorem. We recall that an $end$ for an infinite graph $\Gamma$
is an equivalence class of rays that remain in the same connected component whenever we remove a finite subgraph from $\Gamma$. An infinite graph $\Gamma$ is said to be \emph{$k$-ended} if it contains exactly $k$ ends. Equivalently, $\Gamma$ is $k$-ended if the supremum of the number of connected infinite components of $\Gamma$, when a finite subgraph is removed from $\Gamma$, equals $k$. For each $w\in X^\infty$, let us denote by $\Gamma^p_w$ the infinite Schreier graph of the group $\mathcal{G}_{S_p}$ containing the vertex $w$, that is, the graph describing the orbit of $w\in \partial T_{p+1}$ under the action of $\mathcal{G}_{S_p}$. Put $E_k=\{w\in X^{\infty}: \  \Gamma^p_w \textrm{ is $k$-ended}\}$.

Notice that, using the spherical transitivity of $\mathcal{G}_{S_p}$, one can show that any invariant measurable subset of $X^{\infty}$ must have measure $0$ or $1$ (see \cite{GNS}).

\begin{theorem}\label{fini}
Let $v\in X^\infty$. Then $\Gamma^p_v$ is either $2p$-ended, or $2$-ended, or $1$-ended. In particular:
\begin{enumerate}
\item $E_{2p}= Cof(0^{\infty}) \cup Cof(1^{\infty})\cup \cdots \cup Cof(p^{\infty})$ and consists of one orbit.
\item $E_2= \left(\bigcup_{i=1}^p \cup_{w\in \{0,i\}^{\infty}} Cof(w)\right) \setminus E_{2p} $ and consists of uncountably many orbits.
\item $E_1=X^{\infty}\setminus (E_{2p}\cup E_2)$  and consists of uncountably many orbits.
\end{enumerate}
Moreover $\nu(E_1)=1$.
\end{theorem}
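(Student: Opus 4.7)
The strategy is to read off the number of ends of $\Gamma^p_w$ directly from the decomposition $w=0^{k_0}a_1u_1a_2u_2\ldots$ of the defining word and the associated path of cycles $\mathcal{P}^w$ developed in Subsection \ref{sectionfinite}, and then to deduce the orbit count and the measure claim from Lemma \ref{lemmaCOF} together with a standard tail-event argument. Three exhaustive cases arise according to whether $w$ is cofinal to a constant word, or has decomposition with finite $t$ and $u_t$ not eventually constant, or has decomposition with $t=\infty$; these will correspond to $E_{2p}$, $E_2$, $E_1$ respectively.

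For the first case, I would take $0^\infty$ as orbit representative, justified by Lemma \ref{lemmaCOF}, which identifies the orbit with $Cof(0^\infty)\cup\cdots\cup Cof(p^\infty)$. The $2p$ neighbors $\{i^\infty,\,i0^\infty\}_{i=1}^{p}$ of $0^\infty$ are pairwise distinct, and the $p$ maximal cycles $C_n^i$ through $0^n$, each of length $2^n\to\infty$, converge in the Gromov-Hausdorff sense to $p$ bi-infinite paths through $0^\infty$; since every decoration $\mathcal{D}_k^i$ is finite, removing $0^\infty$ disconnects the $2p$ neighbors into $2p$ infinite components, yielding exactly $2p$ ends. For the second case, when $w$ is cofinal to some $v\in\{0,i\}^\infty$ with $v$ not eventually constant, the decomposition of $w$ has $t=1$ and $\mathcal{P}^w$ stabilizes to a single cycle whose finite approximations $C_n^{a_1}$ have length $2^n\to\infty$; the limit is a bi-infinite path with only finite decorations attached, giving $2$ ends. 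In the third case the decomposition has $t=\infty$, so Lemma \ref{lemmaA} presents each intermediate $P_i^w$ as a cycle of fixed finite length, Lemma \ref{remarkB} shows that consecutive cycles share a single vertex, and $w$ lies on $P_1^w$; finiteness of every $P_i^w$ and of every decoration then forces the chain of cycles to extend only in the direction away from $w$, yielding precisely $1$ end.

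The orbit count is immediate from Lemma \ref{lemmaCOF}: $E_{2p}$ consists of a single orbit, while for $w\notin E_{2p}$ the orbit coincides with the countable set $Cof(w)$, so uncountability of $E_2$ and of $E_1$ produces uncountably many orbits in each. For the measure claim, $E_{2p}\cup E_2$ is contained in $\bigcup_{i=1}^{p}\{w\in X^\infty:\text{ the tail of }w\text{ lies in }\{0,i\}^\infty\}$, and under the uniform product measure each such tail event has $\nu$-probability zero, since the letters are i.i.d.\ with $\nu(\{0,i\})=2/(p+1)<1$; a union bound yields $\nu(E_1)=1$. The main obstacle will be the passage to the Gromov-Hausdorff limit: one must verify that each finite cycle of diverging length genuinely converges to a bi-infinite path (rather than closing through some decoration), and that, in case (iii), the chain of cycles emanating from $w$ extends only away from $w$. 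Both points rest on the finiteness of every $\mathcal{D}_k^i$ together with the nesting description of $\mathcal{P}_n^{w_n}$ given in Remark \ref{remarkA}.
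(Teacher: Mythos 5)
Your proposal is correct and follows essentially the same route as the paper: the path-of-cycles machinery determines the number of ends in each of the three cases, Lemma \ref{lemmaCOF} gives the orbit count, and the measure claim reduces to $\nu(\{0,i\}^{\infty})=0$ plus a countable union bound (the paper evaluates this measure by an explicit series, you by the equivalent i.i.d.\ tail argument). The only slip is in case (ii), where a word cofinal to an element of $\{0,i\}^{\infty}$ need not have $t=1$ in its decomposition, only $t<\infty$; this is harmless, since all that matters is that $\mathcal{P}^w$ stabilizes to a finite chain whose last cycle $C_n^i$ has length $2^n\to\infty$, so that $w$ sits in a finite decoration attached to a single bi-infinite ray.
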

\begin{proof}
\begin{enumerate}
\item The vertex $0^n$ belongs to $C_n^i$, for any $n\geq 1$ and for every $i\in \{1,\ldots, p\}$. When $n$ goes to infinity, the length of $C_n^i$ goes to infinity giving rise to $2$ rays that can be disconnected by removing the vertex $0^{\infty}$. The same can be said for the other cycles containing $0^n$, and this implies that $\Gamma^p_{0^{\infty}}$ is at least $2p$-ended. Any other vertex of $\Gamma^p_{0^{\infty}}$ belongs to some decoration $\mathcal{D}_k$, for some $k\in \mathbb{N}$, that is a finite graph attached to exactly one of the $2p$ rays described above. This implies that $\Gamma^p_{0^{\infty}}$ is $2p$-ended. Moreover, it follows from Lemma \ref{lemmaCOF} that $\Gamma^p_{0^{\infty}}=Cof(0^{\infty}) \cup Cof(1^{\infty})\cup\ldots \cup Cof(p^{\infty})$. This shows that $Cof(0^{\infty}) \cup Cof(1^{\infty})\cup\ldots \cup Cof(p^{\infty}) \subseteq E_{2p}$. The claim will follow from the remaining part of the proof.
\item Let $w=x_1x_2\ldots $ be cofinal to $u\in \{0,i\}^{\infty}\setminus (Cof(0^{\infty}) \cup Cof(i^{\infty}))$, for some $i\in\{1,\ldots,p\}$. By Proposition \ref{propositionA} the path of cycles associated with $w$ is finite. Moreover $d(w,u)<\infty$.  This implies that there exists $N\in \mathbb{N}$ such that $m_u=N$, and so $P_{m_u}(x_1\ldots x_n)=P_{N}(x_1\ldots x_n)=C_n^i$ for every $n$ large enough. The length of  $C_n^i$ is $2^n$ and goes to infinity. Hence $w$ belongs to a decoration attached at $u$ to an infinite double ray and so $\Gamma^p_w$ is $2$-ended. Finally, Lemma \ref{lemmaCOF} implies that each orbit coincides with a cofinality class. 
\item Any $w\in X^{\infty}\setminus \left(\bigcup_{i=1}^p \cup_{w\in \{0,i\}^{\infty}} Cof(w)\right)$ gives rise to an infinite path of cycles which is by construction unique. It follows that $\Gamma^p_w$ is $1$-ended. Also in this case, Lemma \ref{lemmaCOF} implies that each orbit coincides with a cofinality class.
\end{enumerate}
For the last claim, first observe that $E_{2p}$ is countable and so $\nu(E_{2p})=0$. In order to prove that $\nu(E_2)=0$, we notice that
$$
E_2\subset \bigcup_{i=1}^p \bigcup_{n\geq 0} X^n\{0,i\}^{\infty}.
$$
Let $i\in \{1,\ldots,p\}$ and let us show that $ \nu(\{0,i\}^{\infty})=0$. A direct computation gives
$$
\nu(\{0,i\}^{\infty})= 1-(p-1)\sum_{j=1}^{\infty}\frac{2^{j-1}}{(p+1)^j}=0.
$$
It follows that $\nu(E_2)\leq p \sum_{n=0}^{\infty}(p+1)^n \nu(\{0,i\}^{\infty})=0$.  Therefore
$$
1=\nu(E_1)+\nu(E_2)+\nu(E_{2p})=\nu(E_{1}).
$$
\end{proof}
In words, we can say that $E_2$ consists of infinite words containing, after any arbitrary finite prefix long enough, both the letters $0$ and $i$, for one fixed $i\in\{1,\ldots, p\}$, and only them. On the other hand, the set $E_1$ consists of infinite words containing, after any arbitrary finite prefix, at least two letters in $\{1,\ldots, p\}$.
\begin{remark}
Theorem \ref{fini} can be directly proven by using the techniques developed in \cite{BDN}.
\end{remark}

Now we pass to the study of isomorphism classes for the infinite Schreier graphs $\{\Gamma^p_w\}_{w\in X^{\infty}}$.

Let $w=x_1x_2\ldots\in X^{\infty}$. Recall that $(\Gamma^p_w,w)$ is the rooted graph obtained as limit of the finite rooted graphs $(\Gamma^p_n, x_1\ldots x_n)$ in the Gromov-Hausdorff topology. Once we get $(\Gamma^p_w,w)$ we forget the root and consider the infinite graph $\Gamma^p_w$. Given $u,v\in X^{\infty}$ we ask when $\Gamma^p_u$ and $\Gamma^p_v$ are isomorphic.        \\
Observe that the vertices belonging to $E_{2p}$ give rise to one isomorphism class, since they belong to the same orbit (the one containing $0^{\infty}$). Moreover, it is clear that graphs with different number of ends cannot be isomorphic.

We start with the following result.

\begin{lemma}\label{LemmaAbis}
Let $u,v\in E_{1}$ with $v\in \Gamma^p_u$. Then the sequences  $\mathcal{L}^u$ and $\mathcal{L}^v$ are compatible.
\end{lemma}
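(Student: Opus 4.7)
The plan is to exploit that $u$ and $v$ share a common tail (by cofinality), and that from some position onwards the block decompositions of $u$ and $v$ are governed entirely by this common tail, forcing the associated cycle lengths to coincide up to an index shift.

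First, since $u,v\in E_1$, neither belongs to $Cof(0^\infty)\cup\cdots\cup Cof(p^\infty)$, so Lemma \ref{lemmaCOF} yields $v\in Cof(u)$. Write $u=x_1x_2\cdots$ and $v=y_1y_2\cdots$ and fix $k$ with $x_n=y_n$ for all $n\geq k$; enlarging $k$ if necessary (using that, as elements of $E_1$, both words contain infinitely many nonzero letters), one may pick the smallest $k'\geq k$ with $x_{k'}=y_{k'}\ne 0$. The crucial observation is that from position $k'$ onward the \emph{current block letter} (that is, the most recently read nonzero symbol) coincides in $u$ and $v$: regardless of the possibly different pasts, both words read the common nonzero symbol $x_{k'}$ at position $k'$, which becomes the current block letter in both, and since all subsequent symbols agree, the evolution of the current block letter in the two words agrees from that moment on.

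Since a position $p$ is a block endpoint precisely when the next symbol is nonzero and distinct from the current block letter at $p$, it follows that every $p\geq k'$ is a block endpoint of $u$ if and only if it is a block endpoint of $v$. Letting $i_0$, respectively $j_0$, denote the number of block endpoints of $u$, respectively $v$, lying in the interval $[1,k'-1]$, one obtains $p_{i_0+n}^u=p_{j_0+n}^v$ for every $n\geq 1$, where $p_i^u$ stands for the end position of the $i$-th block in the decomposition of $u$. By Lemma \ref{lemmaA} one has $L_i^u=2^{p_i^u}$, and hence $L_{i_0+n}^u=L_{j_0+n}^v$ for all $n\geq 1$, which is exactly the required compatibility of $\mathcal{L}^u$ and $\mathcal{L}^v$ with the choice $l=i_0$, $h=j_0$. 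The only delicate point, addressed by the synchronization argument above, is the transition across position $k'$: the blocks of $u$ and $v$ straddling $k'$ need not correspond under the same index, but once the current block letters are aligned, the matching of all subsequent block endpoints follows directly from the definitions.
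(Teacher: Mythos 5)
Your proof is correct, and it takes a more explicit, word-combinatorial route than the paper's. The paper's own argument is a two-line geometric assertion: since $u$ and $v$ lie in the same Schreier graph, their paths of cycles must eventually join, so the tails of $\mathcal{L}^u$ and $\mathcal{L}^v$ coincide; it invokes neither cofinality nor the explicit formula for the cycle lengths. You instead first reduce to cofinality via Lemma \ref{lemmaCOF} (legitimate: $u\in E_1$ forces the orbit of $u$ to equal $Cof(u)$, so $v\in\Gamma^p_u$ gives $v\sim u$), and then prove the synchronization of the block decompositions directly: past the first common nonzero position $k'$ the ``current block letter'' of the two words agrees, hence a position $\geq k'$ is a block endpoint of $u$ if and only if it is one of $v$, and Lemma \ref{lemmaA} converts block endpoints into cycle lengths via $L_i=2^{p_i}$. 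This makes fully verifiable the step the paper leaves as an assertion (``the sequences of cycles must eventually coincide''), at the cost of a little bookkeeping around the blocks straddling $k'$, which you handle correctly by absorbing the finitely many endpoints below $k'$ into the index shifts $i_0,j_0$ (both words have infinitely many blocks since they lie in $E_1$, by Proposition \ref{propositionA}, so the tails are genuinely infinite). The only cosmetic caveats are that the initial run $0^{k}$ should not be counted as a block, consistently with Lemma \ref{lemmaA}, and that $i_0$ or $j_0$ may vanish; neither affects the conclusion.
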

\begin{proof}
The sequences of cycles associated with $u$ and $v$ must eventually coincide. This exactly means that after some possibly different initial paths, the sequences must join. This implies that the sequences of the lengths of these cycles are compatible.
\end{proof}

\begin{lemma}\label{LemmaB}
Let $u,v\in E_1$ such that $\Gamma^p_u$ is isomorphic to $\Gamma^p_v$. Then the sequences $\mathcal{L}^u$ and $\mathcal{L}^v$ are compatible.
\end{lemma}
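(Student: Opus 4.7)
The strategy is to reduce the claim to Lemma~\ref{LemmaAbis} by showing that the sequence $\mathcal{L}^w$ is an invariant of the unrooted graph $\Gamma^p_w$ once a vertex is chosen, hence is preserved by any graph isomorphism.

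First I would set $u'=\phi(u)$, where $\phi:\Gamma^p_u\to\Gamma^p_v$ is the given isomorphism of unrooted graphs. Since $\Gamma^p_{u'}=\Gamma^p_v$ is $1$-ended (because $v\in E_1$), one has $u'\in E_1$, and Lemma~\ref{LemmaAbis} applied to the pair $u',v\in E_1$ with $u'\in\Gamma^p_v$ gives that $\mathcal{L}^{u'}$ and $\mathcal{L}^v$ are compatible. Everything then reduces to verifying the equality of sequences $\mathcal{L}^u=\mathcal{L}^{u'}$.

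For this last step I would use that $\Gamma^p_u$ is a $1$-ended cactus: by Lemma~\ref{coro} together with Lemma~\ref{lemma1} every vertex is a cut-vertex, all $2$-connected blocks are either loops or cycles, and $u\in E_1$ forces exactly one end. Consequently the block-cut-vertex tree of $\Gamma^p_u$ is itself a $1$-ended tree, so from any vertex $x$ there is a unique ray in this tree toward the end; it determines an intrinsic sequence of blocks $\bigl(B_i(x)\bigr)_{i\geq 1}$ characterized, step by step, as the unique block through the current cut-vertex whose removal leaves an infinite connected component. I would then identify this intrinsic sequence at $u$ with $\mathcal{P}^u$ by checking, via the description of decorations in Section~\ref{sectionschreier}, that at any vertex of $P_i^u$ the blocks distinct from $P_i^u$ and from $P_{i+1}^u$ (the latter attached at the hinge vertex of Lemma~\ref{remarkB}) lie inside a finite decoration $\mathcal{D}_k^j w$ and therefore do not lead to infinity. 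This yields $B_i(u)=P_i^u$ for every $i\geq 1$.

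Finally, because $\phi$ induces an isomorphism of block-cut-vertex trees preserving block sizes, one obtains $|B_i(u)|=|B_i(u')|$ for every $i\geq 1$, hence $\mathcal{L}^u=\mathcal{L}^{u'}$. Combined with the first step this gives that $\mathcal{L}^u$ and $\mathcal{L}^v$ are compatible. The main obstacle is the graph-theoretic identification $B_i(u)=P_i^u$: one has to verify carefully that every branching off the combinatorially-defined path $\mathcal{P}^u$ lands inside a finite decoration of the \emph{infinite} limit graph, which follows from the explicit structural description in Section~\ref{sectionfinite} but has to be checked after passing to the Gromov--Hausdorff limit.
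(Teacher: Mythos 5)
Your proposal is correct and follows essentially the same route as the paper: both set $u'=\phi(u)\in\Gamma^p_v$, apply Lemma~\ref{LemmaAbis} to the pair $u',v$, and conclude via the equality $\mathcal{L}^u=\mathcal{L}^{u'}$. The only difference is that the paper simply asserts this last equality, whereas you justify it carefully through the block-cut-vertex tree of the $1$-ended cactus; that justification is sound and fills in a step the paper leaves implicit.
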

\begin{proof}
Notice that if $\Gamma^p_u$ is isomorphic to $\Gamma^p_v$, then there exists $w\in \Gamma^p_v$ and an isomorphism $\phi:\Gamma^p_u \rightarrow \Gamma^p_v$ such that $\phi(u)=w$. This implies that $\mathcal{L}^u=\mathcal{L}^w$. Since $w\in \Gamma^p_v$, then  Lemma \ref{LemmaAbis} implies that $\mathcal{L}^v$ and $\mathcal{L}^w$ are compatible. The claim follows.
\end{proof}

Whenever $w\in E_1$ there is also a sequence of vertices $\{w(n)\}_{n\in \mathbb{N}}$ defined by $w(0)=w$ and $\{w(i)\}=P_{i}^w\cap P_{i+1}^w$. If $w=0^ka_1u_1\ldots a_iu_i\ldots$ with $a_i\in \{1,\ldots, p\}$, $u_i\in\{0,a_i\}^{\ast}$ and $a_i\neq a_{i+1}$, then from Lemma \ref{remarkB} $w(i)=0^{k+i+\sum_{j=1}^i |u_j|}a_{i+1}u_{i+1}\ldots$. Moreover we  define the sequence of distances $\{d_n^w\}_{n\in\mathbb{N}}$ such that $d_i^{w}=d(w(i-1), w(i))$.

\begin{proposition}\label{propiso}
Let $u,v\in E_1$ such that $\{d_n^u\}_{n\in\mathbb{N}}$ and $\{d_n^v\}_{n\in\mathbb{N}}$ are compatible, then $\mathcal{L}^u$ and $\mathcal{L}^v$ are compatible.
\end{proposition}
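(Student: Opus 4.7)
The plan is to show that the $2$-adic valuation $v_2(d_i^w)$ recovers exactly the quantity $N_{i-1}^w$ that controls the cycle lengths; once this identity is in hand, compatibility of the $d$-sequences translates, by taking valuations and exponentiating, into compatibility of the $L$-sequences. Write the decompositions $u = 0^{k^u} a_1 u_1 a_2 u_2\ldots$ and $v = 0^{k^v} b_1 v_1 b_2 v_2\ldots$ as in Section \ref{sectionfinite}, and set $N_i^u := k^u + i + \sum_{j=1}^i |u_j|$ and $N_i^v := k^v + i + \sum_{j=1}^i |v_j|$, so that Lemma \ref{lemmaA} gives $L_i^u = 2^{N_i^u}$ and $L_i^v = 2^{N_i^v}$. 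Below, writing $w \in \{u,v\}$, the symbol $w_i$ denotes $u_i$ or $v_i$ accordingly.

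The first step is to compute $d_i^w$ directly on the cycle $P_i^w$. Both endpoints $w(i-1)$ and $w(i)$ lie on $P_i^w$, which by Lemma \ref{lemmaA} is a copy of $C_{N_i^w}^{a_i}$ obtained by appending the suffix $a_{i+1}w_{i+1}\ldots$. Identify the vertices of this cycle with $\mathbb{Z}/2^{N_i^w}\mathbb{Z}$ via the adding-machine action of $e_{a_i}$, exactly as in the proof of Proposition \ref{prop1}. By Lemma \ref{remarkB}, $w(i) = 0^{N_i^w}a_{i+1}w_{i+1}\ldots$ sits at position $0$, while $w(i-1) = 0^{N_{i-1}^w} a_i w_i a_{i+1}w_{i+1}\ldots$ projects to the word $0^{N_{i-1}^w} a_i w_i$ of length $N_i^w$, corresponding to the integer
\begin{equation*}
2^{N_{i-1}^w} + \sum_{m=1}^{|w_i|} \varepsilon_m\, 2^{N_{i-1}^w + m} \;=\; 2^{N_{i-1}^w}\cdot q,
\end{equation*}
where $\varepsilon_m \in \{0,1\}$ encodes the $m$-th letter of $w_i$ (with $a_i \mapsto 1$ and $0 \mapsto 0$) and $q = 1 + \sum_m \varepsilon_m 2^m$ is an \emph{odd} integer. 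The geodesic distance on the cycle is therefore
\begin{equation*}
d_i^w \;=\; 2^{N_{i-1}^w}\cdot\min\bigl(q,\ 2^{N_i^w - N_{i-1}^w} - q\bigr) \;=\; 2^{N_{i-1}^w}\cdot m_i^w,
\end{equation*}
and since $N_i^w - N_{i-1}^w = |w_i|+1 \ge 1$, the complement $2^{N_i^w - N_{i-1}^w} - q$ is also odd, making $m_i^w$ odd.

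Consequently $v_2(d_i^w) = N_{i-1}^w$. Applying this to both words, if $l,h$ are shifts witnessing compatibility $d_{l+n}^u = d_{h+n}^v$ for every $n$, then taking $v_2$ of both sides yields $N_{l+n-1}^u = N_{h+n-1}^v$, and exponentiating base $2$ gives $L_{l+n-1}^u = L_{h+n-1}^v$ for every $n$. After the obvious re-indexing this is precisely the compatibility of $\mathcal{L}^u$ and $\mathcal{L}^v$.

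The only delicate point in the argument is the parity of $q$, which is what guarantees that $v_2(d_i^w)$ recovers $N_{i-1}^w$ exactly rather than some strictly larger power of $2$; everything else is routine bookkeeping on top of Lemma \ref{lemmaA} and Lemma \ref{remarkB}, together with the symmetry of an adding-machine cycle around its base vertex $0^{N_i^w}$ already exploited in Proposition \ref{prop1}.
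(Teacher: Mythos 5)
Your proof is correct and follows essentially the same route as the paper's: both rest on the explicit adding-machine computation of $d_i^w$ as $2^{N_{i-1}^w}$ times an odd integer, so that the cycle length $2^{N_{i-1}^w}$ can be recovered from the distance $d_i^w$. The only difference is presentational: you argue directly by taking the $2$-adic valuation of the compatible $d$-sequences, while the paper argues by contraposition using the equivalent observation that $2^{N_{i-1}^w}$ is the smallest dyadic addend of $d_i^w$.
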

\begin{proof}
Let $u=0^ka_1u_1\ldots a_iu_i\ldots$ with $a_i\in \{1,\ldots, p\}$, $u_i\in\{0,a_i\}^{\ast}$, $a_i\neq a_{i+1}$ and $v=0^mb_1v_1\ldots b_iv_i\ldots$ with $b_i\in \{1,\ldots, p\}$, $v_i\in\{0,b_i\}^{\ast}$, $b_i\neq b_{i+1}$ and suppose that $\mathcal{L}^u$ and $\mathcal{L}^v$ are not compatible. Then, for every $h,l\geq 0$, there exist infinitely many $n\in \mathbb{N}$ such that $L_{h+n}^u\neq L_{l+n}^v$. By Lemma \ref{lemmaA} this is equivalent to say
\begin{eqnarray}\label{distancereduction}
2^{\sum_{\ell=1}^{h+n}|u_{\ell}|+k+h+n}\neq 2^{\sum_{\ell=1}^{l+n}|v_{\ell}|+m+l+n}
\end{eqnarray}
for infinitely many $n\in\mathbb{N}$. Notice that, by virtue of Lemma \ref{remarkB}:
\begin{eqnarray*} d_{h+n+1}^{u}&=&d(u(h+n), u(h+n+1))\\
&=&d(0^{k+h+n+\sum_{j=1}^{h+n}|u_j|}a_{h+n+1}u_{h+n+1}\ldots, 0^{k+h+n+1+\sum_{j=1}^{h+n+1}|u_j|}a_{h+n+2}u_{h+n+2}\ldots)\\
&=&d(0^{k+h+n+\sum_{j=1}^{h+n}|u_j|}a_{h+n+1}u_{h+n+1}, 0^{k+h+n+\sum_{j=1}^{h+n+1}|u_j|}).
\end{eqnarray*}
In words, the distance $d^u_{h+n+1}$ can be computed within the finite Schreier graph $\Gamma^p_{k+h+n+\sum_{j=1}^{h+n+1}|u_j|}$.
The last distance relies to vertices belonging to the same cycle (the maximal cycle $C^{a_{h+n+1}}_{k+h+n+\sum_{j=1}^{h+n+1}|u_j|})$ and can be explicitly computed: suppose that $u_{h+n+1}=x_1\ldots x_{|u_{h+n+1}|}$, where $x_i\in\{0,a_{h+n+1}\}$. Put $t=k+h+n+\sum_{j=1}^{h+n}|u_j|$.  Then, by using the adding machine structure, one has
$$
d_{h+n+1}^{u}=\min\{2^{t}+\sum_{i: x_i\neq 0}2^{t+i}, 2^{t}+\sum_{i: x_i= 0}2^{t+i}\}.
$$
Analogously, if $s=m+l+n+\sum_{j=1}^{l+n}|v_j|$ and $v_{h+n+1}=y_1\ldots y_{|v_{l+n+1}|}$, where $y_i\in\{0,b_{l+n+1}\}$, one has
$$
d_{l+n+1}^{v}=\min\{2^{s}+\sum_{i: y_i\neq 0} 2^{s+i}, 2^{s}+\sum_{i: y_i= 0} 2^{s+i}\}.
$$
In any case $2^t$ is the smallest addend of $d_{h+n+1}^{u}$ and $2^s$ is the smallest addend of $d_{l+n+1}^{v}$. Since by Eq. \eqref{distancereduction} it must be $2^t\neq 2^s$, we get $d_{h+n+1}^{u}\neq d_{l+n+1}^{v}$ for infinitely many $n$. The claim follows.

\end{proof}

\begin{proposition}\label{prop11}
Let $w,v\in E_1$. 
Then $\Gamma^p_w$ is isomorphic to $\Gamma^p_v$ if and only if the sequences $\{d_n^w\}_{n\in\mathbb{N}}$ and $\{d_n^v\}_{n\in\mathbb{N}}$ are compatible.
\end{proposition}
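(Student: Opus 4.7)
The plan is to establish the two implications separately, exploiting the cactus structure of the infinite Schreier graphs together with the explicit formula for $d_n^w$ derived in the proof of Proposition \ref{propiso}.

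For the necessity, suppose $\phi \colon \Gamma^p_w \to \Gamma^p_v$ is a graph isomorphism and set $w' := \phi(w)$. Since graph isomorphisms preserve blocks and distances, $\phi$ sends the unique infinite path of cycles $\mathcal{P}^w$ onto $\mathcal{P}^{w'}$, hence $d_n^{w'} = d_n^w$ for every $n$. By the $1$-endedness of $\Gamma^p_v$ (Theorem \ref{fini}), the paths $\mathcal{P}^{w'}$ and $\mathcal{P}^v$ must eventually coincide, so there exist $l, h$ with $w'(l+n) = v(h+n)$ for every $n \geq 0$; this yields $d_{l+n+1}^w = d_{l+n+1}^{w'} = d_{h+n+1}^v$, which is exactly the compatibility of $\{d_n^w\}$ with $\{d_n^v\}$.

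For the sufficiency, assume $d_{l+n}^w = d_{h+n}^v$ for all $n \geq 1$. The crucial observation is that the distance already encodes the parameter $K_m^u := k + m + \sum_{j=1}^m |u_j|$: indeed, the computation in the proof of Proposition \ref{propiso} factors $d_{m+1}^u = 2^{K_m^u}\bigl(1 + \sum_{i \in S} 2^i\bigr)$ with the second factor an odd integer, whence $K_m^u$ equals the $2$-adic valuation of $d_{m+1}^u$. Consequently $K_{l+n}^w = K_{h+n}^v$ for every $n \geq 0$, while Proposition \ref{propiso} also delivers $L_{l+n}^w = L_{h+n}^v$. I then construct an isomorphism $\phi \colon \Gamma^p_w \to \Gamma^p_v$ by mapping $w(l)$ to $v(h)$. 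At the cut-vertex $w(l)$ the graph $\Gamma^p_w$ decomposes, meeting only at $w(l)$, into the forward tail $P_{l+1}^w \cup P_{l+2}^w \cup \cdots$ together with all its decorations, and a finite backward piece; by the discussion preceding Remark \ref{remo}, this backward piece is a copy of the standard decoration $\mathcal{D}_{K_l^w}^{a_{l+1}}$, since $w(l)$ plays the role of the center $0^{K_l^w}$ inside the cycle $P_{l+1}^w$ viewed as a copy of $C_{K_{l+1}^w}^{a_{l+1}}$, and the complement of the $a_{l+1}$-petal at this center is precisely $\mathcal{D}_{K_l^w}^{a_{l+1}}$. An analogous decomposition at $v(h)$ yields the backward piece $\mathcal{D}_{K_h^v}^{b_{h+1}}$; since $K_l^w = K_h^v$ and $\mathcal{D}_k^i \cong \mathcal{D}_k^j$ via the symmetries of the star $S_p$, the two backward pieces are isomorphic. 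The forward tails are matched inductively on $n$: each pair of cycles $P_{l+n+1}^w, P_{h+n+1}^v$ shares length $L_{l+n+1}^w = L_{h+n+1}^v$ and cut-vertex distance $d_{l+n+1}^w = d_{h+n+1}^v$, and the finite decorations sitting at their $L_{l+n+1}^w - 2$ remaining vertices are dictated by the cycle length alone through Lemma \ref{coro} and Proposition \ref{prop1}.

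The main technical step will be the coherence of this inductive construction at each cut-vertex $w(l+n)$: the $p-1$ cycles of length $2^{K_{l+n}^w}$ attached there (one being the already-matched backward cycle $P_{l+n}^w$ and the other $p-2$ being side cycles of $\mathcal{D}_{K_{l+n}^w}$) must be paired bijectively with their counterparts at $v(h+n)$, compatibly with the backward isomorphism built at the previous step. This should be controlled by the nested description $\mathcal{D}_{K_{l+n}^w} \subset \mathcal{D}_{K_{l+n+1}^w}$, obtained by adjoining $P_{l+n+1}^w$ and the two side cycles at $w(l+n+1)$, together with the interchangeability of the $p-1$ petals of each $\mathcal{D}_K$ coming from the symmetries of $S_p$.
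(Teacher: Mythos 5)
Your proposal is correct and follows essentially the same route as the paper's proof: necessity via the fact that an isomorphism carries the path of cycles of $w$ onto that of its image, which must eventually join the path of cycles of $v$; sufficiency by matching the two paths of cycles using the compatible lengths and distances, with the decorations hanging on each maximal cycle determined by its length and the entry points identified up to the reflection of Proposition \ref{prop1}. Your $2$-adic valuation observation, recovering $K_m^u$ directly from $d_{m+1}^u$, is a slightly cleaner way to deduce the compatibility of $\mathcal{L}^w$ and $\mathcal{L}^v$ than the paper's appeal to (an adaptation of) Proposition \ref{propiso}, but the overall argument is the same, and the level of detail you give for extending the cycle-by-cycle matching to a global isomorphism matches or exceeds that of the paper.
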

\begin{proof}
Suppose that $\Gamma^p_w$ and $\Gamma^p_v$ are isomorphic. Then there exists $z\in \Gamma^p_v$ such that $(\Gamma^p_w,w)$ and $(\Gamma^p_v,z)$ are isomorphic as rooted graphs. Since the paths of cycles associated with $w$ and $z$ coincide, we have $d_n^w=d_n^z$ for every $n\geq 0$. Therefore $\{d_n^z\}_{n\in\mathbb{N}}$ and $\{d_n^v\}_{n\in\mathbb{N}}$ are compatible, because $z\in \Gamma^p_v$ and so the paths of cycles of $z$ and $v$ must join. The claim follows. \\
\indent Viceversa, first suppose that $d_n^w=d_n^v$ for every $n\in\mathbb{N}$. Then by adapting the proof of Proposition \ref{propiso} we deduce that $\mathcal{L}^w=\mathcal{L}^v$.  We want to define an isomorphism $\phi:\Gamma^p_w\to\Gamma^p_v$. First of all, put $\phi(w)=v$. Notice that $w(1)$ (resp. $v(1)$) is the only vertex of $P_1^w$ (resp. $P_1^v$) that is attached to a cycle isomorphic to $P_2^w$ (resp. $P_2^v$). Moreover, the cycles $P_2^w$ and $P_2^v$ are isomorphic by assumption. Being $d(w,w(1))=d(v,v(1))$, we can put $\phi(w(1))=v(1)$. By iterating the same argument for each $n$, we deduce that it must be $\phi(w(n))=v(n)$ for any $n\geq 0$. It follows that $\phi(P_n^w)=P_n^v$ for every $n$. Since such cycles have the same size, they have attached subgraphs that are isomorphic. This implies that $\phi$ can be extended to an isomorphism between $\Gamma^p_w$ and $\Gamma^p_v$.\\
\indent If the sequences $\{d_n^w\}_{n\in\mathbb{N}}$ and $\{d_n^v\}_{n\in\mathbb{N}}$ are compatible, then there exist $i,j$ such that $d_{i+n}^w=d_{j+n}^v$ for every $n\in\mathbb{N}$. Define $\phi(w(i+n))=v(j+n)$ for each $n$. Notice that $\Gamma^p_w\setminus\{w(i)\}$ contains one infinite connected component which is isomorphic to the only infinite one of  $\Gamma^p_v\setminus\{v(j)\}$. The remaining parts of the two graphs $\Gamma^p_w$ and $\Gamma^p_v$ are finite subgraphs attached to the isomorphic cycles $P_{i+1}^w$ and $P_{j+1}^v$ and so they are isomorphic. This gives an isomorphism between $\Gamma^p_w$ and $\Gamma^p_v$.
\end{proof}

Pay attention to the fact that there are infinite sequences $u,v\in X^{\infty}$ such that $\mathcal{L}^u=\mathcal{L}^v$, but $\Gamma^p_u$ and $\Gamma^p_v$ are not isomorphic.
\begin{example}
Consider the vertices $u=(1002)^{\infty}$ and $v=(1012)^{\infty}$. The path of cycles associated with $u=(1002)^{\infty}$ and $v=(1012)^{\infty}$ is the same. However, by using Proposition \ref{prop11}, one can check that there is no isomorphism between the graphs $\Gamma^p_u$ and $\Gamma^p_v$, since $u(n)$ and $v(n)$ belong to two cycles of the same length, but $d(u(n),u(n+1)) \neq d(v(n),v(n+1))$ for each $n$.
\end{example}

For every $i,j \in \{1,\ldots, p\}$, let us define the map $\phi_{i,j}:X\to X$ as:
$$
\phi_{i,j}(k) = \left\{
                  \begin{array}{ll}
                    k & \hbox{if } k\neq i \\
                    j & \hbox{if } k=i.
                  \end{array}
                \right.
$$
In particular, the map $\phi_{i,j}$ fixes $0$, for any $i,j$; moreover, $\phi_{i,i}$ is the identity map. Given $w=x_1x_2\ldots\in X^{\ast}\cup X^{\infty}$ we define $\phi_{i,j}(w)=\phi_{i,j}(x_1)\phi_{i,j}(x_2)\ldots$.

Given $u\in \{0,i\}^{\ast}\cup \{0,i\}^{\infty}$, we denote by $u'$ the word obtained from $u$ by switching $0$ to $i$ and viceversa. For a given element $u\in X^{\infty}$ cofinal to a word in $\{0,i\}^{\infty}$, having the form $u_1iu_2$, where $iu_2$ is the maximal suffix of $u$ in $\{0,i\}^{\infty}$, we put $\overline{u}=u_1iu_2'$.

\begin{example}\label{esempioserpe}
Let $w=0^ka_1u_1a_2u_2a_3u_3\ldots$, with $a_i\in \{1,2,\ldots, p\}$, $u_i\in \{0,a_i\}^\ast$ and $a_{i+1}\neq a_i$ as usual. We have:
$$
d^w_1=d(w,w(1)); \ \ d^w_2=d(w(1),w(2)); \ \ d^w_3=d(w(2),w(3));\  \ d^w_4=d(w(3),w(4)),
$$
with
\begin{eqnarray*}
w(1)= 0^{k+1+|u_1|}a_2u_2\ldots &;& \quad w(2)= 0^{k+2+|u_1|+|u_2|}a_3u_3\ldots,  \\
w(3)= 0^{k+3+|u_1|+|u_2|+|u_3|}a_4u_4\ldots &;& \quad  w(4)= 0^{k+4+|u_1|+|u_2|+|u_3|+|u_4|}a_5u_5\ldots.
\end{eqnarray*}
Now consider the vertex $v=0^ka_1u_1'a_2u_2a_3u_3\ldots$, with $u_1'$ obtained from $u_1$ by switching $0$ to $a_1$ in $u_1$ and viceversa (see Fig. \ref{figserpentone}).
Notice that $w(n) \equiv v(n)$, for each $n\geq 1$, since $w$ and $v$ belong to the same cycle. It follows that $\mathcal{P}^w=\mathcal{P}^{v}$, so that $\mathcal{L}^w = \mathcal{L}^{v}$.\\
Finally, a comparison between the decompositions of $w$ and $v$ ensures that $d^w_1=d(w,w(1)) = d^{v}_1=d(v,v(1))$, so that we also have $d^w_n= d^{v}_n$ for each $n\geq 1$.
\begin{figure}[h]
\begin{center}
\psfrag{w}{$w\equiv w(0)$}\psfrag{w'}{$v$}\psfrag{P1}{$P^w_1$}\psfrag{P2}{$P^w_2$}\psfrag{P3}{$P^w_3$}\psfrag{P4}{$P^w_4$}\psfrag{P5}{$P^w_5$}
\scriptsize
\psfrag{w1}{$w(1)$}\psfrag{w2}{$w(2)$}\psfrag{w3}{$w(3)$}\psfrag{w4}{$w(4)$}
\includegraphics[width=0.5\textwidth]{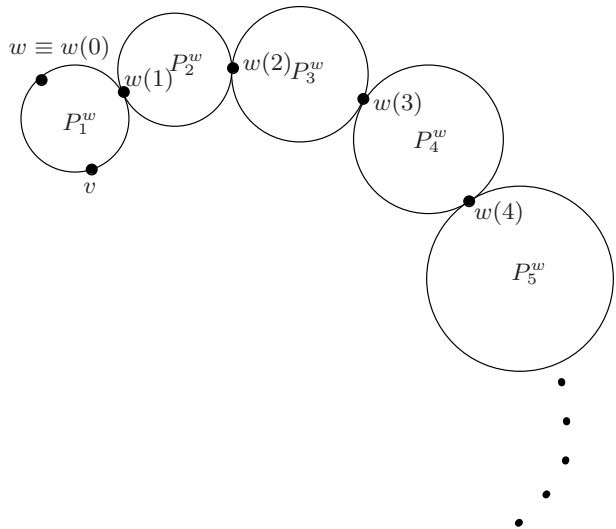}
\end{center}\caption{The path of cycles of the word $w$ of Example \ref{esempioserpe}.} \label{figserpentone}
\end{figure}
\end{example}

\begin{theorem}\label{teo_iso}
\begin{enumerate}
\item There is one isomorphism class of $2p$-ended graphs consisting of the graph $\Gamma^p_{0^{\infty}}$.
\item Let $u,v\in E_2$, Then $\Gamma^p_u$ is isomorphic to $\Gamma^p_v$ if and only if either $v\sim \phi_{i,j}(u)$ or $
v\sim \overline{\phi_{i,j}(u)}$ for some $i,j\in \{1,\ldots, p\}$. In particular there are uncountably many isomorphism classes of $2$-ended graphs, each consisting of $2p$ graphs.
\item Let $u,v\in E_1$ with $u=0^ka_1u_1a_2u_2\ldots a_iu_i\ldots$, $a_i\in\{1,\ldots, p\}$, $a_i\neq a_{i+1}$ and $u_i\in\{0,a_i\}^{\ast}$. Then $\Gamma^p_u$ is isomorphic to $\Gamma^p_v$ if and only if
$v\sim 0^kb_1v_1b_2v_2\ldots b_iv_i\ldots$ where, for any $i\in \mathbb{N}$,  $b_i\in\{1,\ldots, p\}$, $|v_i|=|u_i|$, $b_i\neq b_{i+1}$ and either $v_i=\phi_{a_i,b_i}(u_i)$ or $v_i=\phi_{a_i,b_i}(u_i)'$. In particular, there are uncountably many isomorphism classes of $1$-ended graphs, each consisting of uncountably many graphs.
\end{enumerate}
\end{theorem}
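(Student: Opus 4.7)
\textbf{Part (1)} is immediate from Theorem \ref{fini}: the set $E_{2p}$ is a single orbit of $\mathcal{G}_{S_p}$, so there is only one Schreier graph arising from it, namely $\Gamma^p_{0^\infty}$.

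\textbf{Part (2)}. For the sufficiency direction, I would first note that each map $\phi_{i,j}$, extended letter-by-letter to $T_{p+1}$, is a tree automorphism that conjugates the generator $e_i$ of $\mathcal{G}_{S_p}$ to $e_j$ and fixes the other generators; consequently it induces a graph isomorphism $\Gamma^p_u \to \Gamma^p_{\phi_{i,j}(u)}$. The reflection $\overline{\,\cdot\,}$ is produced by passing to the limit the cycle-reflecting automorphisms $\phi^i_n$ of Proposition \ref{prop1}: since $\phi^i_n$ preserves attached decorations, it extends to a graph automorphism of $\Gamma^p_u$ that sends $u$ to $\overline{u}$. For the necessity direction, I would exploit the structure of $\Gamma^p_u$ for $u \in E_2$: it consists of an infinite double ray $R$ (Gromov--Hausdorff limit of the maximal cycles $C^i_n$) with finite decorations $\{\mathcal{D}_k\}$ attached at the vertices $0^k i v$ of $R$. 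Any graph isomorphism must send $R$ to $R'$ and match the decoration pattern. The remaining freedom is exactly $p$ choices of the petal letter labeling $R$ and $2$ choices of orientation, yielding the $2p$ orbits $\phi_{i_0,j}(u)$, $\overline{\phi_{i_0,j}(u)}$ with $j=1,\ldots,p$ (where $i_0$ is the specific petal letter for the tail of $u$); their distinctness follows since they lie in pairwise different cofinality classes, whence in pairwise different orbits by Lemma \ref{lemmaCOF}.

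\textbf{Part (3)}. Here the main tool is Proposition \ref{prop11}, which reduces isomorphism to the compatibility of the distance sequences $\{d^u_n\}$ and $\{d^v_n\}$. For sufficiency, assuming $v \sim 0^k b_1 v_1 b_2 v_2 \ldots$ with the stated relations, a direct computation following the proof of Proposition \ref{propiso} yields $d^u_n = d^v_n$ for every $n$: the exponent $t = k + n + \sum_{\ell \leq n}|u_\ell|$ agrees with the analogous quantity for $v$ because $|u_\ell| = |v_\ell|$, and the two choices $v_n = \phi_{a_n,b_n}(u_n)$ and $v_n = \phi_{a_n,b_n}(u_n)'$ realize precisely the two branches of the $\min$ defining $d^u_n$. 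For necessity, Proposition \ref{prop11} gives compatibility; after replacing $v$ by a cofinal representative that aligns the two sequences exactly, $d^u_n = d^v_n$ forces $|u_n| = |v_n|$ (through the shared exponent $t$) and determines the pattern of $0$'s in $v_n$ up to the reflection $0 \leftrightarrow b_n$. The letter $b_n$ is free aside from the decomposition constraint $b_n \neq b_{n+1}$, yielding the claimed relation. The cardinality assertions then follow: the family of admissible decompositions $\{(a_i, u_i)\}$ is uncountable (producing uncountably many isomorphism classes), and for fixed $u$ the admissible sequences $\{(b_i, v_i)\}$ form an uncountable family modulo cofinality.

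\textbf{Main obstacle}. The hardest step is the necessity direction of part (3): one has to translate the abstract compatibility of $\{d^u_n\}$ and $\{d^v_n\}$ back into word-level information, which requires carefully aligning the two decompositions by an appropriate shift within the cofinality class of $v$ and then inverting the distance formula. The subtlety is that the formula determines $v_n$ only up to the reflection $0 \leftrightarrow b_n$, and one must argue that this two-fold ambiguity, together with the freedom in choosing $b_n$ subject only to $b_n \neq b_{n+1}$, accounts exactly for all admissible $v$. The analogous necessity step in part (2) is easier because the spine/decoration decomposition of a $2$-ended graph leaves only finitely many discrete choices.
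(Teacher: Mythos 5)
Your overall strategy coincides with the paper's: part (1) is quoted from Theorem \ref{fini}; part (2) is proved via the spine-plus-decorations structure of a $2$-ended graph, with the $2p$ discrete choices coming from the petal letter and the reflection of Proposition \ref{prop1}; part (3) is reduced to compatibility of the distance sequences via Proposition \ref{prop11} and then the distance formula from the proof of Proposition \ref{propiso} is inverted. Your variant in part (3) of extracting $|u_n|=|v_n|$ directly from the $2$-adic valuation of $d^u_n$ (rather than first invoking Lemma \ref{LemmaB} for the compatibility of $\mathcal{L}^u$ and $\mathcal{L}^v$, as the paper does) is legitimate and slightly more economical, and you correctly identify that the minimum in the distance formula determines $v_n$ exactly up to the reflection $0\leftrightarrow b_n$.

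There is, however, one genuinely false step in your sufficiency argument for part (2): the map $\phi_{i,j}$ extended letter-by-letter is \emph{not} a tree automorphism when $i\neq j$, since $\phi_{i,j}(i)=\phi_{i,j}(j)=j$, so it is not injective on the alphabet; moreover the natural candidate, the letter transposition $(i\,j)$, conjugates $e_i$ to $e_j$ but also $e_j$ to $e_i$ rather than fixing it. The conclusion you want is still true: on a word $u=u_1iu_2$ whose tail $iu_2$ lies in $\{0,i\}^{\infty}$, the transposition $(i\,j)$ and $\phi_{i,j}$ agree after the finite prefix $u_1$, so $(i\,j)(u)\sim\phi_{i,j}(u)$ and the transposition does induce the required isomorphism $\Gamma^p_u\to\Gamma^p_{\phi_{i,j}(u)}$ because it permutes the labelled generating set. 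Alternatively one can avoid any global symmetry and build the isomorphism directly on the double ray and its attached decorations, which is what the paper does (mapping $e_i^{t}(0^{|u_1|}iu_2)$ to $e_j^{t}(0^{|u_1|}\phi_{i,j}(iu_2))$, and to $e_j^{-t}(\cdot)$ in the reflected case). With that repair your argument goes through.
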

\begin{proof}
\begin{enumerate}
\item The first statement is clear by Theorem \ref{fini}.
\item Let $u\in E_2$. Then by Theorem \ref{fini} there exists $i\in\{1,\ldots, p\}$ such that $u=u_1iu_2$, where $iu_2$ is the maximal suffix of $u$ in $\{0,i\}^{\infty}$. This implies that $u$ belongs to a decoration isomorphic to $\mathcal{D}_{|u_1|}$ and attached at $0^{|u_1|} iu_2$ to the corresponding infinite double ray. Notice that, due to the decomposition of $u$, the graphs $\Gamma^p_u$ and $\Gamma^p_{\phi_{i,j}(u)}$ are isomorphic. In particular, such isomorphism maps the decoration attached at  $0^{|u_1|}iu_2$ to the isomorphic one attached at $0^{|u_1|}\phi_{i,j}(iu_2)$. This isomorphism maps the vertices of the infinite double ray $e_i^{t}(0^{|u_1|}iu_2)$ to $e_j^{t}(0^{|u_1|}\phi_{i,j}(iu_2))$, for each $t\in \mathbb{Z}$. Similarly, the graphs $\Gamma^p_u$ and $\Gamma^p_{\overline{\phi_{i,j}(u)}}$ are isomorphic, and the isomorphism maps $0^{|u_1|}iu_2$ to $0^{|u_1|} \overline{\phi_{i,j}(iu_2)}$. It follows that the vertices of the infinite double ray $e_i^{t}(0^{|u_1|}iu_2)$ are mapped to $e_j^{-t}(0^{|u_1|}\overline{\phi_{i,j}(iu_2)})$, for each $t\in \mathbb{Z}$, as one can deduce from  Proposition \ref{prop1}. Finally, by using Lemma \ref{lemmaCOF}, it is easy to show that, if $v$ is either cofinal to $\phi_{i,j}(u)$ or cofinal to $\overline{\phi_{i,j}(u)}$, then $\Gamma^p_u$ is isomorphic to $\Gamma^p_v$.\\
    \indent Viceversa, suppose $u=x_1x_2\ldots=u_1iu_2$ and $v=y_1y_2\ldots=v_1jv_2$, where $iu_2$ is the maximal suffix of $u$ in $\{0,i\}^{\infty}$ and  $iv_2$ is the maximal suffix of $v$ in $\{0,j\}^{\infty}$. Assume that $\Gamma^p_u$ and $\Gamma^p_v$ are isomorphic through $\phi$ in such a way that $\phi(u)=v$. Then $\phi$ must induce an isomorphism of the finite rooted graphs $(\Gamma^p_n,x_1\ldots x_n)$ and $(\Gamma^p_n,y_1\ldots y_n)$ for each $n$. Take $n$ large enough so that $n>|u_1|+1$. By Proposition \ref{prop1}, the graphs $(\Gamma^p_n,x_1\ldots x_n)$ and $(\Gamma^p_n,y_1\ldots y_n)$ are isomorphic if and only if  either $jy_{|u_1|+2}\ldots y_{n}=\phi_{i,j}(ix_{|u_1|+2}\ldots x_{n})$ or $jy_{|u_1|+2}\ldots y_{n}=j\phi_{i,j}(x_{|u_1|+2}\ldots x_{n})'$. The claim follows.
\item First let $v= 0^kb_1v_1b_2v_2\ldots b_iv_i\ldots$ where for any $i\in \mathbb{N}$,  $b_i\in\{1,\ldots, p\}$, $|v_i|=|u_i|$, $b_i\neq b_{i+1}$ and either $v_i=\phi_{a_i,b_i}(u_i)$ or $v_i=\phi_{a_i,b_i}(u_i)'$. We claim that $d_n^v=d_n^u$ for any $n\in\mathbb{N}$.  Recall that
\begin{eqnarray*}
d_{n}^{v}&=&d(v(n-1), v(n))\\
&=&d(0^{k+\sum_{j=1}^{n-1}|v_j|}b_{n}v_{n}\ldots, 0^{k+\sum_{j=1}^{n}|v_j|}b_{n+1}v_{n+1}\ldots)\\
&=&d(0^{k+\sum_{j=1}^{n-1}|v_j|}b_{n}v_{n},0^{k+\sum_{j=1}^{n}|v_j|}).
\end{eqnarray*}
From the proof of Proposition \ref{propiso} it follows that the value of $d_{n}^{v}$ only depends on the position of the 0's and $b_n'$s in $v_n$ and so it is independent from the specific $b_n$. By assumption $k+\sum_{j=1}^{n}|v_j|=k+\sum_{j=1}^{n}|u_j|$ and by Proposition \ref{prop1} the vertex $v$ satisfies $d_{n}^{v}=d_{n}^{u}$ if $v_i=\phi_{a_i,b_i}(u_i)$ or $v_i=\phi_{a_i,b_i}(u_i)'$. This implies that $\Gamma^p_v$ and $\Gamma^p_u$ are isomorphic (as rooted graphs). Finally, Lemma \ref{lemmaCOF} implies that if $v'$ is cofinal to $v$, then $\Gamma^p_v = \Gamma^p_{v'}$ and so $\Gamma^p_{v'}$ is isomorphic to $\Gamma^p_{u}$.\\
Viceversa suppose $v= 0^mb_1v_1b_2v_2\ldots b_iv_i\ldots$, with $b_i\in\{1,\ldots, p\}$, $b_i\neq b_{i+1}$ and $v_i\in\{0,b_i\}^{\ast}$. First notice that Lemma \ref{LemmaB} implies that if $u,v\in E_1$ are such that $\Gamma^p_u$ is isomorphic to $\Gamma^p_v$, then the sequences $\mathcal{L}^u$ and $\mathcal{L}^v$ must be compatible. By Lemma \ref{lemmaA}, this is equivalent to say that there exist $l,h$ such that
\begin{eqnarray}\label{arturo}
|v_{l+i}|=|u_{h+i}| \ \forall \ i\geq 1     \ \mbox{ and }  \ m+l+\sum_{j=1}^{l}|v_j|=k+h+\sum_{j=1}^{h}|u_j|.
\end{eqnarray}
Moreover, Proposition \ref{prop11} implies that also the sequences $\{d_n^u\}_{n\in\mathbb{N}}$ and $\{d_n^v\}_{n\in\mathbb{N}}$ must be compatible. As before, one has
\begin{eqnarray*}
d_{l+i}^{v}&=&d(v(l+i-1), v(l+i))\\
&=&d(0^{k+l+i-1+\sum_{j=1}^{l+i-1}|v_j|}b_{l+i}v_{l+i}\ldots, 0^{k+l+i+\sum_{j=1}^{l+i}|v_j|}b_{l+i+1}v_{l+i+1}\ldots)\\
&=&d(0^{k+l+i-1+\sum_{j=1}^{l+i-1}|v_j|}b_{l+i}v_{l+i},0^{k+l+i+\sum_{j=1}^{l+i}|v_j|}).
\end{eqnarray*}
Then, using Eq. \eqref{arturo}, one can check that this quantity equals $d_{h+i}^{u}$ if and only if, for any $i \geq 1$, there exists $b_i\in \{1,\ldots, p\}$ such that either $v_{l+i+1}= \phi_{a_i,b_i}(u_{h+i+1})$ or $v_{l+i+1}= \phi_{a_i,b_i}(u_{h+i+1})'$. The claim follows.
\end{enumerate}
\end{proof}
\noindent For a given $w\in X^\infty$, put
$$
I_w= \{z\in X^\infty : \Gamma^p_z \mbox{ and } \Gamma^p_w \mbox{ are isomorphic}\}.
$$
\begin{corollary} \label{finalcorozero}
For every $w\in X^\infty$, one has $\nu(I_w)=0$.
\end{corollary}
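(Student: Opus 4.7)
My plan is to split into cases based on the number of ends of $\Gamma^p_w$. Since the number of ends is a graph isomorphism invariant, $I_w \subseteq E_j$ where $j \in \{1, 2, 2p\}$ is the number of ends of $\Gamma^p_w$. When $w \in E_{2p} \cup E_2$, Theorem \ref{fini} already gives $\nu(E_{2p} \cup E_2) = 0$, so the corollary is immediate. The only substantive case is $w \in E_1$, which I now address (assuming $p \geq 2$, since for $p = 1$ the group $\mathcal{G}_{S_1}$ is the adding machine and there is only one infinite Schreier graph up to isomorphism).

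For $w \in E_1$ with decomposition $w = 0^k a_1 u_1 a_2 u_2 \ldots$, I would set $N_n = k + n + \sum_{i=1}^n |u_i|$, the length of the prefix of $w$ ending at the $n$-th block. By Theorem \ref{teo_iso}(3), any $z \in I_w$ is cofinal to some admissible $y = 0^k b_1 v_1 b_2 v_2 \ldots$, which means the two sequences agree from some coordinate $M$ onwards; since $N_n \to \infty$, I may take $M = N_n$ for some $n \geq 0$. Writing the tail of $z$ past position $N_n$ as $t = z_{N_n + 1} z_{N_n + 2} \ldots$, the condition becomes $t \in T_n$, where
\[
T_n = \{b_{n+1} v_{n+1} b_{n+2} v_{n+2} \ldots : b_i \in \{1,\ldots,p\},\ b_{i+1} \neq b_i,\ v_i \in \{\phi_{a_i, b_i}(u_i), \phi_{a_i, b_i}(u_i)'\},\ i > n\}.
\]
This yields $I_w \subseteq \bigcup_{n \geq 0} A_n$, with $A_n = \{z \in X^\infty : z_{N_n + 1} z_{N_n + 2} \ldots \in T_n\}$.

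The decisive step is to show $\nu(A_n) = 0$ for each $n$. Since the coordinates of $z$ past $N_n$ are i.i.d.\ uniform on $X$ under $\nu$, this reduces to computing the probability that a random sequence in $X^\infty$ lies in $T_n$. My key observation is that every $t \in T_n$ must satisfy $t_{\mathrm{pos}_i} \in \{1,\ldots,p\}$ for every $i \geq 1$, where $\mathrm{pos}_i = i + \sum_{j=1}^{i-1}|u_{n+j}|$ is the position of the letter $b_{n+i}$ in the tail. These constraints sit at pairwise distinct coordinates, hence are independent under the product measure $\nu$, each with probability $\tfrac{p}{p+1} < 1$; their countable intersection therefore has measure $\prod_{i \geq 1} \tfrac{p}{p+1} = 0$. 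Countable subadditivity then gives $\nu(I_w) \leq \sum_{n \geq 0} \nu(A_n) = 0$. The main obstacle is controlling the uncountable family of admissible words $y$: the trick is that cofinality allows one to discard the initial segment of $y$ and parametrize $I_w$ by a countable collection of tail events $A_n$, each of which is null purely because every block boundary in the tail imposes an independent nonzero-letter constraint.
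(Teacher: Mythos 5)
Your argument is correct, and it shares the paper's skeleton (reduce to $w\in E_1$ via Theorem \ref{fini}, then invoke the characterization of Theorem \ref{teo_iso}(3) and absorb cofinality into a countable union), but the heart of the measure computation is genuinely different. The paper fixes the root and computes $\nu(E_w')$ \emph{exactly}, where $E_w'$ is the set of $z$ with $(\Gamma^p_z,z)\cong(\Gamma^p_w,w)$ as rooted graphs: it removes, level by level along the decomposition of $w$, the cylinders that violate admissibility and shows the resulting telescoping series sums to $0$; it then deduces $\nu(I_w)=0$ from $I_w=\bigcup_{u\in E_w'}Cof(u)$. You instead cover $I_w$ by the countably many tail events $A_n$ and discard almost all of the admissibility constraints, keeping only the requirement that the block-boundary letters $b_{n+i}$ sit at prescribed, pairwise distinct coordinates and are nonzero; since these are independent events of probability $\frac{p}{p+1}<1$ under the Bernoulli measure, each $A_n$ is null. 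This is more elementary and more robust (it would survive any weakening of Theorem \ref{teo_iso}(3) that still forces infinitely many letters at determined positions to avoid $0$), at the cost of giving only an upper bound rather than the exact value of $\nu(E_w')$; your explicit reduction of the cofinality issue to shifted tail events is also somewhat more careful than the paper's one-line assertion that $\nu(E_w')=0$ suffices. Two minor points to tidy up: you should note that $\nu(A_n)=\nu(T_n)$ because $A_n$ is the preimage of $T_n$ under the $N_n$-fold shift, which preserves $\nu$; and the side remark on $p=1$ is consistent with the paper, where $E_1=\emptyset$ in that degenerate case.
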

\begin{proof}
Notice that, by virtue of Theorem \ref{fini}, if $w\in E_{2p}$ or $w\in E_2$, then $\nu(I_w)=0$. Therefore, we can restrict our attention to the case $w\in E_1$.
Let $w=0^ka_1u_1\ldots$ and let us consider its decomposition. By Theorem \ref{teo_iso}, we have $I_w = \bigcup_{u\in E_w'} Cof(u)$, where
$$
E_w'= \{z\in X^\infty : (\Gamma^p_z,z) \mbox{ and } (\Gamma^p_w,w) \mbox{ are isomorphic as rooted graphs}\}.
$$
In particular, the claim follows if we prove that $\nu(E'_w)=0$. By Claim (3) of Theorem \ref{teo_iso}, in order to measure $E_w'$, we first must remove from $X^{\infty}$ all subsets of type $0^tmX^{\infty}$, with $t\leq k-1$ and $m\neq 0$. Then in $0^kX^{\infty}$ we remove the subset $0^{k+1}X^{\infty}$. After that consider the subsets $0^kaX^{\infty}$,  with $a\in\{1,\ldots, p\}$. In each of these $p$ subsets of $X^\infty$, all subsets of type $0^kavX^{\infty}$ must be removed, for any $v$ of length $u_1$, except for the two $v$'s giving rise to isomorphism. After that, for each of the remaining ray, we proceed as before, according to the decomposition of $w$: the subsets of type $0^kavbX^{\infty}$ with $b\in\{0,a\}$ must be removed. By iterating this argument, a direct computation gives:
\begin{eqnarray*}
 \nu(E_w')&=& 1-p\sum_{j=1}^{k}\frac{1}{(p+1)^j}-\frac{1}{(p+1)^{k+1}}\\
&-& p\frac{(p+1)^{|u_1|}-2}{(p+1)^{|u_1|+k+1}}-2\frac{2p}{(p+1)^{|u_1|+k+2}}\\
&-& 2p(p-1)\frac{(p+1)^{|u_2|}-2}{(p+1)^{|u_1|+|u_2|+k+2}}- 2\frac{4p(p-1)}{(p+1)^{|u_1|+|u_2|+k+3}}\\
&-&  4p(p-1)^2\frac{(p+1)^{|u_3|}-2}{(p+1)^{|u_1|+|u_2|+|u_3|+k+3}} -\cdots
\end{eqnarray*}
We can rearrange the sum as follows
\begin{eqnarray*}
 \nu(E_w')&=& 1-p\sum_{j=1}^{k}\frac{1}{(p+1)^j}-\frac{1}{(p+1)^{k+1}}-\frac{p}{(p+1)^{k+1}}+\\
&-& \sum_{m=1}^{\infty}\left[\frac{2^mp(p-1)^{m-1}}{(p+1)^{|u_1|+\cdots +|u_m|+k+m}}- \frac{2^{m+1}p(p-1)^{m-1}}{(p+1)^{|u_1|+\cdots +|u_m|+k+m+1}}-\frac{2^mp(p-1)^{m}}{(p+1)^{|u_1|+\cdots +|u_m|+k+m+1}}\right]\\
&=&0+ \sum_{m=1}^{\infty}\frac{2^mp(p-1)^{m-1}}{(p+1)^{|u_1|+\cdots +|u_m|+k+m+1}}[(p+1)-2-p+1] =0.
\end{eqnarray*}
\end{proof}

\end{document}